\setlist[enumerate]{label=\rm{(\arabic*)}}
\setlist[enumerate,2]{label=\rm({\it\roman*})}
\setlist[itemize]{label=\raisebox{0.25ex}{\tiny$\bullet$}}
\theoremstyle{plain}    
 \newtheorem{thm}{Theorem}[section]
 \numberwithin{equation}{section} 
 \numberwithin{figure}{section} 
 \theoremstyle{plain}
 \theoremstyle{plain}    
 \newtheorem{cor}[thm]{Corollary} 
 \theoremstyle{plain}    
 \newtheorem{prop}[thm]{Proposition} 
 \theoremstyle{plain}    
 \newtheorem{lem}[thm]{Lemma} 
 \theoremstyle{remark}
 \newtheorem{rmk}[thm]{Remark}
 \theoremstyle{definition}
\theoremstyle{definition}
\newtheorem{defi}[thm]{Definition}
\newcommand{\C}{{\mathbb{C}}}
\newcommand{\N}{{\mathbb{N}}}
\newcommand{\R}{{\mathbb{R}}}
\newcommand{\e}{\varepsilon}
\newcommand{\f}{\varphi}
\DeclareMathOperator{\tr}{tr}
\newcommand{\mycolor}{Navy}
\title{Regularizing properties of Complex Monge-Amp\`ere flows}
\author[T. D. T\^O]{Tat Dat T\^O}
\date{\today}
\address{ Institut Math\'ematiques de Toulouse, \\
Universit{\'e} Paul Sabatier\\ 31062 Toulouse cedex 09\\ France}
\email{tat-dat.to@math.univ-toulouse.fr}
\begin{document}
\begin{abstract}
We study the regularizing properties of complex Monge-Amp\`ere flows on a K\"ahler manifold $(X,\omega)$ when the initial data are $\omega$-psh functions  with zero Lelong number at all points. We  prove that the general Monge-Amp\`ere flow has a solution which is immediately smooth.  We also prove the uniqueness and stability of solution.
\end{abstract}
\maketitle

\section*{Introduction}
 Let $(X,\omega)$ be a compact K\"ahler manifold of complex dimension $n$ and $\alpha\in H^{1,1}(X,\R)$ a K\"ahler class with $\omega\in \alpha$. Let $\Omega$ be a smooth volume form on $X$. Denote by $(\theta_t)_{t\in [0,T]}$ a family of K\"ahler forms on $X$, and assume that $\theta_0=\omega$. The goal of this note is to prove the regularizing and stability properties of solutions to the following complex Monge-Amp\`ere flow
\begin{equation*}
(CMAF) \hskip1cm 
\dfrac{\partial \f_t}{\partial t}= \log\dfrac{(\theta_t+dd^c\f_t)^n}{\Omega}-F(t,z,\f_t)  
\end{equation*} 
where $F$ is a smooth function and $\f(0,z)=\f_0(z)$ is a $\omega$-plurisubharmonic ($\omega$-psh) function  with zero Lelong numbers at all points. 

\medskip

One motivation for studying this Monge-Amp\`ere flow is  that the K\"aler-Ricci flow can be reduced to a particular case of $(CMAF)$. When $F=F(z)$ and $\theta_t=\omega+t\chi$, where $\chi=\eta-Ric(\omega)$, then $(CMAF)$ is the local potential equation of the twisted K\"ahler-Ricci flow
 \begin{equation*}
 \frac{\partial\omega_t }{\partial t}= -Ric(\omega_t)+\eta,
 \end{equation*}
which was studied recently by Collins-Sz\'ekelydihi \cite{CSz12} and Guedj-Zeriahi \cite{GZ13}.

\medskip Running the K\"ahler-Ricci flow from a rough initial data has been the purpose of several recent works \cite{CD07}, \cite{ST}, \cite{SzTo}, \cite{GZ13}, \cite{BG13}, \cite{DiNL14}. In \cite{ST}, \cite{SzTo} the authors succeeded to run $(CMAF)$ from continuous initial data, while \cite{DiNL14} and \cite{GZ13} are running a simplified flow starting from an initial current with zero Lelong numbers. In this note we extend these latter works  to deal with general $(CMAF)$ and arbitrary initial data.

\medskip
A strong motivation for studying $(CMAF)$ with degenerate initial data comes from the Analytic Minimal Model Program introduced by J. Song and G. Tian \cite{ST}, \cite{ST12}.  It requires to study the behavior of the K\"ahler-Ricci flow on mildly singular varieties, and one is naturally lead to study weak solutions of degenerate complex Monge-Amp\`ere flows (when the function $F$ in $(CMAF)$ is not smooth but continuous). Eyssidieux-Guedj-Zeriahi have developed in \cite{EGZ14} a viscosity theory for degenerate complex Monge-Amp\`ere flows which allows in particular to define and study the K\"ahler-Ricci flow on varieties with canonical singularities.

\medskip
Our main result is the following:

\medskip
\textbf{Theorem A.}
\textit{Let $\f_0$ be a $\omega$-psh function  with zero Lelong numbers at all points. Let $(t,z,s)\mapsto F(t,z,s)$ be a smooth function on $[0,T]\times X\times \R$ such that $\frac{\partial F}{\partial s}\geq -C$ for some $C\geq 0$.\\
 Then there exists a family of smooth strictly $\theta_t-psh$ functions $(\varphi_t)$ satisfying $(CMAF)$
in $(0, T]\times X,$ with $\varphi_t\rightarrow \varphi_0$ in $L^1(X),$ as $t\searrow 0^+$. This family is moreover unique if $C=0$ and $|\frac{\partial F}{\partial t}|<C'$ for some $C'>0$.}

\medskip
We further show that 

\begin{itemize}
\item $\f_t$ converges to $\f_0$ in $C^0(X)$ if $\f_0$ is continuous.
\item $\varphi_t$ converges to $\varphi_0$ in capacity if $\f_0$ is merely bounded.
\item $\varphi_t$ converges to $\varphi_0$ in energy if $\varphi\in {\mathcal E}^1(X,\omega)$ has finite energy.
\end{itemize}
Moreover, we also prove  the following stability result:

\medskip
\textbf{Theorem B.} \textit{Let $\varphi_0,\varphi_{0,j}$ be $\omega$-psh functions with zero Lelong number at all points, such that $\f_{0,j}\rightarrow \f_0$ in $L^1(X)$. Denote by $\varphi_{t,j}$ and $\f_j$ the corresponding solutions of $(CMAF)$ with initial condition $\f_{0,j}$ and $\f_0$ respectively. Then for each $\e\in(0,T)$
$$\f_{t,j}\rightarrow \f_{t}\  \text{ in }\  C^\infty ([\e, T]\times X)\ \text{ as }\  j\rightarrow +\infty.$$
\quad Moreover, if $\f_0$ and $\psi_0$ are continuous, then for any $k\geq 0$, for any $0<\e<T$, there exists a positive constant $C(k,\e)$ depending only on $k$ and $\e$ such that
\begin{equation*}
||\f-\psi||_{C^{k}([\e,T]\times X)}\leq C(k,\e)||\f_0-\psi_0||_{L^\infty(X,\omega)
}.
\end{equation*}}
We also prove in Section \ref{nef} that one can run the Monge-Amp\`ere flow from a positive current representing a nef class, generalizing results from \cite{GZ13}, \cite{DiNL14}.

\medskip
The paper is organized as follows. In Section \ref{strategy} we recall some analytic tools, and give the strategy of proof of Theorem A. In Section \ref{a priori estm} we prove  various a priori estimates for the regular case. In Section \ref{proof} we prove Theorem A using the a priori estimates from Section \ref{a priori estm}.  In Section \ref{uniqueness and stability} we prove the uniqueness in Theorem A and Theorem B. In Section \ref{nef} we show that the Monge-Amp\`ere flow can run from a nef class.

\medskip
\textbf{Acknowledgement.} The author is grateful to his supervisor Vincent Guedj for support, suggestions and encouragement. We also would like to thank  Hoang Son Do, Eleonora Di Nezza, Hoang Chinh Lu, Van Hoang Nguyen and Ahmed Zeriahi  for very useful discussions. 
\section{Preliminaries and Strategy}\label{strategy}

In this section we recall some analytic tools which will be used in the sequel.
\subsection{Plurisubharmonic functions and Lelong number}
Let $(X,\omega)$ be a compact K\"ahler manifold. We define the following operators:
 $$d:=\partial +\bar{\partial},\quad d^c:= \frac{1}{2i\pi}(\partial-\bar{\partial}).$$
\begin{defi}
We let $PSH(X,\omega)$ denote the set of all {\sl$\omega$-plurisubharmonic functions} ($\omega$-psh for short), i.e the set of functions $\f\in L^1(X,\R\cup \{-\infty\})$ which can be locally written as the sum of a smooth and a plurisubharmonic function, and such that$$ \omega+dd^c\f\geq 0$$
in the weak sense of positive currents. 
\end{defi}

\begin{defi}
Let $\f$ be a $\omega$-psh function and  $x\in X$. The {\sl Lelong number} of $\f$ at $x$ is
$$\nu(\f,x):=\liminf_{z\rightarrow x} \frac{\f(z)}{\log |z-x|}.$$
We say $\f$ has a {\sl logarithmic pole of coefficient $\gamma$} at $x$ if $\nu(\f,x)=\gamma$. 
\end{defi}

\subsection{A Laplacian inequality }
Let $\alpha$ and $\omega$ be $(1,1)$-forms on a complex manifold $X$ with $\omega>0$. Then the trace of $\alpha$ with respect $\omega$ is defined as 
$$\tr_\omega(\alpha)=n\frac{\alpha\wedge \omega^{n-1}}{\omega^n}.$$  We can diagonalize $\alpha$ with respect to $\omega$ at each point of $X$, with real eigenvalues $\lambda_1\,\ldots,\lambda_n$ then $\tr_\omega(\alpha)=\sum_j\lambda_j$. 
The Laplace of a function $\varphi$ with respect to $\omega$ is given by $$\Delta_\omega\varphi =\tr_\omega(dd^c\varphi).$$
We have the following eigenvalue estimate:
\begin{lem}\label{Trace ineq}
If $\omega$ and $\omega'$ are two positive $(1,1)$-forms on a complex manifold $X$ of dimension $n$, then
$$
\left( \frac{\omega'^n}{\omega^n}\right)^{\frac{1}{n}}\leq \frac{1}{n}\tr_\omega(\omega')\leq \left( \frac{\omega'^n}{\omega^n} \right) (\tr_{\omega'}(\omega))^{n-1}.
$$
\end{lem}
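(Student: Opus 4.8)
The plan is to reduce everything to a pointwise statement by simultaneous diagonalization. Fix $x\in X$. Since $\omega>0$ we may choose local coordinates at $x$ in which $\omega$ is the standard Euclidean form and $\omega'$ is diagonal, with nonnegative eigenvalues $\lambda_1,\dots,\lambda_n\geq 0$ (nonnegative because $\omega'\geq 0$; note the lemma as stated only needs $\omega'$ positive, but the inequality persists in the limit so assuming $\lambda_j>0$ loses nothing). In these coordinates we have the three elementary identities
\begin{equation*}
\frac{\omega'^n}{\omega^n}=\prod_{j=1}^n\lambda_j,\qquad \frac1n\tr_\omega(\omega')=\frac1n\sum_{j=1}^n\lambda_j,\qquad \tr_{\omega'}(\omega)=\sum_{j=1}^n\frac1{\lambda_j}.
\end{equation*}
So the two claimed inequalities become the purely arithmetic statements
\begin{equation*}
\Bigl(\prod_j\lambda_j\Bigr)^{1/n}\leq \frac1n\sum_j\lambda_j
\qquad\text{and}\qquad
\frac1n\sum_j\lambda_j\leq\Bigl(\prod_j\lambda_j\Bigr)\Bigl(\sum_j\frac1{\lambda_j}\Bigr)^{n-1}.
\end{equation*}

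The first is exactly the AM--GM inequality applied to $\lambda_1,\dots,\lambda_n$, so nothing more is needed there. For the second, I would argue as follows: for each fixed index $k$,
\begin{equation*}
\lambda_k=\Bigl(\prod_{j=1}^n\lambda_j\Bigr)\prod_{j\neq k}\frac1{\lambda_j}\leq \Bigl(\prod_{j=1}^n\lambda_j\Bigr)\Bigl(\sum_{j=1}^n\frac1{\lambda_j}\Bigr)^{n-1},
\end{equation*}
where the inequality is just the trivial bound $\prod_{j\neq k}\mu_j\leq(\sum_j\mu_j)^{n-1}$ for nonnegative reals $\mu_j=1/\lambda_j$ (each of the $n-1$ factors in the product is at most the full sum). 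Averaging this over $k=1,\dots,n$ gives $\frac1n\sum_k\lambda_k\leq(\prod_j\lambda_j)(\sum_j1/\lambda_j)^{n-1}$, which is the desired bound. Since $x$ was arbitrary and all quantities involved are genuine functions (ratios of top-degree forms, traces), the pointwise inequalities yield the inequality of functions on $X$.

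There is no real obstacle here; the only thing to be slightly careful about is the simultaneous diagonalization step — one should invoke that a positive Hermitian form $\omega$ and an arbitrary Hermitian form $\omega'$ can be simultaneously diagonalized (unitarily with respect to $\omega$), which is exactly the content recalled just before the lemma ("we can diagonalize $\alpha$ with respect to $\omega$"). If one wants to handle $\omega'\geq 0$ with some $\lambda_j=0$ directly rather than by continuity, both sides behave correctly: the left inequality is $0\leq\frac1n\sum\lambda_j$, and for the right one the middle term is still finite while the right-hand term is $+\infty$ (or one interprets $0\cdot\infty$ via the product form $\lambda_k=(\prod\lambda_j)\prod_{j\neq k}\lambda_j^{-1}$, which stays valid). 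So the statement holds verbatim.
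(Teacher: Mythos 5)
Your proof is correct. The paper states Lemma~\ref{Trace ineq} without proof (it is presented as a known eigenvalue estimate, with only the diagonalization set-up given in the preceding paragraph), so there is no argument in the paper to compare against. Your route is the natural one that the paper's set-up invites: after simultaneous diagonalization of $\omega'$ against $\omega$, the left inequality is AM--GM on the eigenvalues $\lambda_1,\dots,\lambda_n$, and the right one follows from writing $\lambda_k=\bigl(\prod_j\lambda_j\bigr)\prod_{j\neq k}\lambda_j^{-1}$ and bounding each of the $n-1$ factors $\lambda_j^{-1}$ by the full sum $\sum_j\lambda_j^{-1}$, then averaging over $k$. Both steps are sound, and the reduction of the form-identities to the eigenvalue identities $\omega'^n/\omega^n=\prod_j\lambda_j$, $\tr_\omega\omega'=\sum_j\lambda_j$, $\tr_{\omega'}\omega=\sum_j\lambda_j^{-1}$ is exactly the content recalled in the paper just before the lemma. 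One minor remark: since the lemma assumes both $\omega$ and $\omega'$ are positive, all $\lambda_j>0$ and the limiting discussion about $\lambda_j=0$ is unnecessary, though it does no harm.
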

The next result is a basic tool for establishing second order a priori estimates for complex Monge-Amp\`ere equations. 
\begin{prop}[\cite{Siu}]
\label{Laplace ineq}
Let $\omega,\omega'$ be two K\"aler forms on a compact complex manifold. If the holomorphic bisectional curvature of $\omega$ is bounded below by a constant $B\in \R$ on $X$, then we have
$$
\Delta_{\omega'}\log\tr_\omega(\omega')\geq -\frac{\tr_\omega Ric(\omega')}{\tr_\omega(\omega')}+B\tr_{\omega'}(\omega).$$
\end{prop}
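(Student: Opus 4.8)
This is the classical Aubin--Yau/Siu second-order inequality, so the proof is a pointwise computation and the plan is entirely local. Fix a point $x_0\in X$ and choose holomorphic coordinates centred at $x_0$ that are normal for $\omega$ (so $g_{i\bar j}(x_0)=\delta_{ij}$ and $dg_{i\bar j}(x_0)=0$, using that $\omega$ is K\"ahler) and that simultaneously diagonalize $\omega'$ at $x_0$, say $g'_{i\bar j}(x_0)=\mu_i\delta_{ij}$ with all $\mu_i>0$. Write $S:=\tr_\omega\omega'=g^{i\bar j}g'_{i\bar j}$, so that $S(x_0)=\sum_i\mu_i$ and $\tr_{\omega'}\omega(x_0)=\sum_i\mu_i^{-1}$. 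Since $\Delta_{\omega'}u=\tr_{\omega'}(dd^c u)$ is, in these coordinates, a fixed positive multiple of ${g'}^{i\bar j}\partial_i\partial_{\bar j}u$ — and that multiple, common to both sides of the asserted inequality, may be suppressed — everything reduces, via the exact identity
\[
\Delta_{\omega'}\log S=\frac{\Delta_{\omega'}S}{S}-\frac{{g'}^{i\bar j}\,\partial_iS\,\partial_{\bar j}S}{S^2},
\]
to an estimate for $\Delta_{\omega'}S$ at $x_0$ together with the absorption of the last (negative) term.

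First I would expand $\Delta_{\omega'}S={g'}^{m\bar n}\partial_m\partial_{\bar n}(g^{i\bar j}g'_{i\bar j})$. Since $dg(x_0)=0$ the cross terms vanish and $\Delta_{\omega'}S=(A)+(B)$, with $(A)={g'}^{m\bar n}(\partial_m\partial_{\bar n}g^{i\bar j})g'_{i\bar j}$ and $(B)={g'}^{m\bar n}\sum_i\partial_m\partial_{\bar n}g'_{i\bar i}$. Differentiating $g^{i\bar j}g_{k\bar j}=\delta^i_k$ twice at $x_0$ identifies $\partial_m\partial_{\bar n}g^{i\bar j}(x_0)$, up to relabelling, with the curvature tensor of $\omega$, so that $(A)=\sum_{i,m}\frac{\mu_i}{\mu_m}R_{i\bar i m\bar m}(x_0)$ in the orthonormal frame, with the sign convention for which positive bisectional curvature makes $(A)$ positive. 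The hypothesis that the holomorphic bisectional curvature of $\omega$ is bounded below by $B$ means $R_{i\bar i m\bar m}(x_0)\geq B$ for all $i,m$; since $\mu_i/\mu_m>0$, this gives $(A)\geq B\big(\sum_i\mu_i\big)\big(\sum_m\mu_m^{-1}\big)=B\,S\,\tr_{\omega'}\omega$, i.e. $(A)/S\geq B\,\tr_{\omega'}\omega$.

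For $(B)$ I would use that $\omega'$ is K\"ahler to commute derivatives: $\partial_m g'_{i\bar i}=\partial_i g'_{m\bar i}$ and $\partial_{\bar n}g'_{m\bar i}=\partial_{\bar i}g'_{m\bar n}$ give $\partial_m\partial_{\bar n}g'_{i\bar i}=\partial_i\partial_{\bar i}g'_{m\bar n}$, hence $(B)=\sum_i{g'}^{m\bar n}\partial_i\partial_{\bar i}g'_{m\bar n}$. Writing ${g'}^{m\bar n}\partial_i\partial_{\bar i}g'_{m\bar n}=\partial_i\partial_{\bar i}\log\det g'+(\text{a nonnegative gradient term})$ and using $-\partial_i\partial_{\bar i}\log\det g'=Ric(\omega')_{i\bar i}$ yields $(B)=-\tr_\omega Ric(\omega')+G$ at $x_0$, where $G\geq 0$ is the usual ``gradient of $g'$'' quantity, equal at $x_0$ to $\sum_{i,k,l}\mu_k^{-1}\mu_l^{-1}|\partial_i g'_{k\bar l}|^2$.

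The remaining step, and the only genuinely delicate one, is to absorb $\frac{{g'}^{i\bar j}\partial_iS\,\partial_{\bar j}S}{S^2}$ into $G/S$. At $x_0$ we have $\partial_m S=\sum_k\partial_m g'_{k\bar k}=\sum_k\partial_k g'_{m\bar k}$, using the K\"ahler identity for $\omega'$ in exactly this form; then Cauchy--Schwarz gives $|\partial_m S|^2\leq\big(\sum_k\mu_k\big)\big(\sum_k\mu_k^{-1}|\partial_k g'_{m\bar k}|^2\big)=S\sum_k\mu_k^{-1}|\partial_k g'_{m\bar k}|^2$, so that
\[
\frac{{g'}^{i\bar j}\partial_iS\,\partial_{\bar j}S}{S^2}=\frac1{S^2}\sum_m\frac{|\partial_m S|^2}{\mu_m}\leq\frac1S\sum_{m,k}\frac{|\partial_k g'_{m\bar k}|^2}{\mu_m\mu_k}\leq\frac GS,
\]
the last inequality holding because the middle double sum is a partial sum of the nonnegative terms defining $G$ (take $i=k$ and match $(k,l)$ with $(m,k)$). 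Combining the three pieces and using $G-\frac1S{g'}^{i\bar j}\partial_iS\,\partial_{\bar j}S\geq0$ together with $S=\tr_\omega\omega'$ gives $\Delta_{\omega'}\log S\geq B\,\tr_{\omega'}\omega-\frac{\tr_\omega Ric(\omega')}{\tr_\omega\omega'}$, which is the assertion. Apart from the Cauchy--Schwarz bookkeeping above — in particular, choosing the right form of the K\"ahler identity for $\omega'$ so that the resulting sum is controlled term by term by $G$ — everything is routine normal-coordinate calculus.
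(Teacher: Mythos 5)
The paper does not prove this proposition; it cites Siu's lecture notes \cite{Siu}, so there is no in-paper argument to compare against. Your proof is the classical Aubin--Yau--Siu computation and it is correct: normal coordinates for $\omega$ with $\omega'$ diagonalized, the split of $\Delta_{\omega'}\tr_\omega\omega'$ into the curvature term $(A)$ and the second-derivative term $(B)$, the identification of $(A)$ with $\sum_{i,m}\tfrac{\mu_i}{\mu_m}R_{i\bar i m\bar m}$ giving the $B\,\tr_{\omega'}\omega$ lower bound, the Ricci extraction $(B)=-\tr_\omega Ric(\omega')+G$ with $G\ge 0$, and finally the K\"ahler identity $\partial_m g'_{k\bar k}=\partial_k g'_{m\bar k}$ plus Cauchy--Schwarz to absorb $\lvert\partial S\rvert^2_{\omega'}/S^2$ into $G/S$. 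That last absorption step is exactly the only non-mechanical part of the proof, and your bookkeeping --- matching the Cauchy--Schwarz output $\sum_{m,k}\mu_m^{-1}\mu_k^{-1}\lvert\partial_k g'_{m\bar k}\rvert^2$ term by term with the $(i,p)=(k,k)$ slice of $G=\sum_{i,m,p}\mu_m^{-1}\mu_p^{-1}\lvert\partial_i g'_{m\bar p}\rvert^2$ --- is correct. The one thing worth flagging is a convention: you implicitly read the hypothesis ``bisectional curvature $\ge B$'' as $R_{i\bar i m\bar m}\ge B$ in an $\omega$-orthonormal frame (no factor $2$ or $\delta_{im}$ normalization), which is indeed the reading that produces the stated inequality, but one should say so since normalizations of bisectional curvature vary across references.
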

\subsection{Maximum principle and comparison theorem}
We establish here a slight generalization of the comparison theorem that we will need.
\begin{prop}\label{comparison}
Let $\f, \psi\in C^\infty ([0,T]\times X)$ be $\theta_t$-psh functions such that
$$\frac{\partial \f}{\partial t}\leq \log\frac{\left(\theta_t+dd^c \f\right)^n}{\Omega}-F(t,z,\f),$$
$$\frac{\partial \psi}{\partial t}\geq \log\frac{\left(\theta_t+dd^c \psi\right)^n}{\Omega}-F(t,z,\psi),$$
where $F(t,z,s)$ is a smooth function with $\frac{\partial F}{\partial s}\geq -\lambda$. Then 
\begin{equation}\label{compa-max}
\sup_{[0,T]\times X}(\f_t-\psi_t)\leq e^{\lambda T}\max\, \bigg\{\sup_X(\f_0-\psi_0);0\bigg\}.
\end{equation}  
In particular, if  $\varphi_0\leq \psi_0$, then $\varphi_t\leq \psi_t$.
\end{prop}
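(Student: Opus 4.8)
The plan is to reduce the statement to a classical parabolic maximum principle by introducing an auxiliary function that absorbs the exponential factor $e^{\lambda T}$. Set $w(t,z) := \f_t(z) - \psi_t(z)$, which is smooth on $[0,T]\times X$, and consider $u(t,z) := e^{-\lambda t} w(t,z)$. I expect $u$ to attain its maximum over the compact set $[0,T]\times X$ at some point $(t_0,z_0)$. If $t_0 = 0$ there is nothing to do, since then $u \le \sup_X(\f_0-\psi_0)$ everywhere and hence $w_t \le e^{\lambda t}\sup_X(\f_0-\psi_0) \le e^{\lambda T}\max\{\sup_X(\f_0-\psi_0),0\}$. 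So the real work is to rule out an interior-in-time maximum with positive value, i.e. to show that if $t_0 > 0$ then $u(t_0,z_0) \le 0$.

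Assume $t_0 > 0$. First I would subtract the two differential inequalities:
\begin{equation*}
\frac{\partial w}{\partial t} \le \log\frac{(\theta_t+dd^c\f_t)^n}{\Omega} - \log\frac{(\theta_t+dd^c\psi_t)^n}{\Omega} - \big(F(t,z,\f_t) - F(t,z,\psi_t)\big).
\end{equation*}
At the maximum point $(t_0,z_0)$ of $u$ (equivalently, of $e^{-\lambda t}w$), the spatial Hessian satisfies $dd^c w(t_0,z_0) \le 0$, hence $\theta_{t_0}+dd^c\f_{t_0} \le \theta_{t_0}+dd^c\psi_{t_0}$ as $(1,1)$-forms at $z_0$, so the difference of the two logarithmic terms is $\le 0$ there. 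By the mean value theorem, $F(t_0,z_0,\f_{t_0}) - F(t_0,z_0,\psi_{t_0}) = \frac{\partial F}{\partial s}(t_0,z_0,\xi)\, w(t_0,z_0)$ for some intermediate value $\xi$. If we are at a maximum with $w(t_0,z_0) > 0$, then using $\frac{\partial F}{\partial s} \ge -\lambda$ gives $-\big(F(\cdots,\f) - F(\cdots,\psi)\big) \le \lambda w(t_0,z_0)$. Combining, $\frac{\partial w}{\partial t}(t_0,z_0) \le \lambda w(t_0,z_0)$. On the other hand, since $(t_0,z_0)$ is a maximum of $u = e^{-\lambda t}w$ with $t_0>0$, either $t_0$ is interior, where $\partial_t u = 0$, or $t_0 = T$, where $\partial_t u \ge 0$; in both cases $\partial_t u(t_0,z_0) \ge 0$, which unwinds to $\frac{\partial w}{\partial t}(t_0,z_0) \ge \lambda w(t_0,z_0)$. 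These two inequalities force $\frac{\partial w}{\partial t}(t_0,z_0) = \lambda w(t_0,z_0)$, which is consistent, so I need slightly more care: I would instead argue by contradiction assuming $\max u > \max\{\sup_X u(0,\cdot),0\}$, perturb by replacing $u$ with $u - \delta t$ for small $\delta>0$ (or work on $[0,T-\eta]$ and let $\eta \to 0$) to make the time derivative strictly negative at an interior max, obtaining the strict inequality $\partial_t w < \lambda w$ versus $\partial_t w \ge \lambda w$, a contradiction. Letting $\delta \to 0$ then yields $\sup_{[0,T]\times X} u \le \max\{\sup_X(\f_0-\psi_0),0\}$, which is exactly \eqref{compa-max} after multiplying by $e^{\lambda t} \le e^{\lambda T}$. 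The final assertion follows by taking $\lambda$ to be any nonnegative bound (e.g. the given one) and noting that $\f_0 \le \psi_0$ makes the right-hand side zero.

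The main obstacle is the standard subtlety in parabolic maximum-principle arguments: at a would-be maximum one gets $\partial_t w \ge \lambda w$ from the location of the max and $\partial_t w \le \lambda w$ from the PDE, and these are compatible rather than contradictory. Resolving this cleanly requires the $-\delta t$ perturbation trick (or equivalently examining the flow's behaviour just before $t_0$), together with being careful that the intermediate value $\xi$ from the mean value theorem stays in the range where $\frac{\partial F}{\partial s} \ge -\lambda$ — which is automatic here since the hypothesis is global in $s$. Everything else is routine: the sign of $dd^c w$ at a spatial max, monotonicity of $\det$ along the positive-definite cone (equivalently Lemma \ref{Trace ineq}), and compactness of $X$.
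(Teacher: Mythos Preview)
Your proposal is correct and follows essentially the same route as the paper: both consider $e^{-\lambda t}(\f_t-\psi_t)$, add a $-\delta t$ (in the paper, $-\varepsilon t$) perturbation to force a strict inequality at an interior-in-time maximum, and use $dd^c(\f-\psi)\le 0$ there to kill the Monge--Amp\`ere term. The only cosmetic difference is that the paper packages your mean-value-theorem step as the monotonicity of $s\mapsto F(t,z,s)+\lambda s$, concluding directly that $\f_{t_0}(z_0)\le\psi_{t_0}(z_0)$ at the perturbed maximum.
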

\begin{proof} 
We define $u(x,t)=e^{-\lambda t}(\f_t-\psi_t)(x)-\e t\in C^\infty([0,T]\times X)$ where $\e>0$ is fixed. Suppose $u$ is maximal at $(t_0,x_0)\in [0,T]\times X$. If $t_0=0$ then we have directly the estimate (\ref{compa-max}). Assume now $t_0>0$, using the maximum principle, we get $\dot{u}\geq 0$ and  $dd^c_x u\leq 0 $ at $(t_0,x_0)$, hence
$$ -\lambda e^{-\lambda t}(\f_t-\psi_t)+e^{-\lambda t}(\dot{\f_t}-\dot{\psi_t}) \geq \e>0 \text{ and } dd^c_x \f_t\leq dd^c_x\psi_t.$$
Observing that at $(t_0,x_0)$
$$\dot{\f}-\dot{\psi}\leq F(t,x,\psi)-F(t,x,\f),$$  we infer that
$$0< F(t,x,\psi)+ \lambda \psi -[F(t,x,\f)+\lambda \f],$$
at $(t_0,z_0)$.
Since $\frac{\partial F}{\partial s}\geq -\lambda$, $F(t,x,s)+\lambda s$ is an increasing function in $s$, hence $\f_{t_0}(x_0)\leq \psi_{t_0}(x_0)$. Thus $u(x,t)\leq u(x_0,t_0)\leq 0$. Letting $\e\rightarrow 0$, this yields 
$$\sup_{[0,T]\times X}(\f_t-\psi_t)\leq e^{\lambda T}\max\,\bigg\{\sup_X(\f_0-\psi_0); 0\bigg\}.$$
\end{proof}
The following proposition has been given for the twisted K\"ahler-Ricci flow by Di Nezza and Lu \cite{DiNL14}:
\begin{prop}\label{weak comparison}
Assume $\psi_t$ a smooth solution of $(CMAF)$ with a smooth initial data $\psi_0$ and $\varphi_t$ is a subsolution of $(CMAF)$ with initial data $\varphi_0$ which is a $\omega$-psh function with zero Lelong number at all point: i.e $\f_t\in C^\infty((0,T]\times X)$ satisfies
$$\frac{\partial \f_t}{\partial t}\leq \log\frac{\left(\theta_t+dd^c \f_t\right)^n}{\Omega}-F(t,z,\f_t),$$
 and $\varphi_t\rightarrow \varphi_0$ in $L^1(X)$. Suppose that $\varphi_0\leq \psi_0$, then $\varphi_t\leq \psi_t$.
\end{prop}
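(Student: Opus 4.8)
The plan is to reduce the statement to the smooth comparison principle of Proposition \ref{comparison} by approximating the degenerate initial datum $\f_0$ from above by smooth strictly $\omega$-psh functions. Since $\f_0$ has zero Lelong number at every point, Demailly's regularization theorem (together with the Kiselman–Demailly technique, or rather the fact that zero Lelong numbers are exactly what makes the Demailly approximants converge with uniformly controlled integrability) provides a decreasing sequence $\f_{0,j}\searrow \f_0$ of smooth $\omega$-psh functions; after adding a small multiple of a fixed strictly $\omega$-psh potential we may assume each $\f_{0,j}$ is strictly $\omega$-psh, at the cost of an error $\delta_j\to 0$. The key point, which relies on the zero Lelong number hypothesis, is that the densities $(\omega+dd^c\f_{0,j})^n/\Omega$ are then under control well enough that Theorem A (the existence part, applied with smooth initial data $\f_{0,j}$) produces smooth solutions $\f_{t,j}$ of $(CMAF)$ with initial data $\f_{0,j}$, defined on $[0,T]\times X$.

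The main steps are then as follows. First I would apply Proposition \ref{comparison} to the pair $(\f_t,\f_{t,j})$: since $\f_t$ is a subsolution and $\f_{t,j}$ is an honest (super)solution, and since $\f_0\le \f_{0,j}$, we get $\f_t\le \f_{t,j}$ on $(0,T]\times X$ for every $j$. Here one has to be slightly careful because $\f_t$ is only assumed smooth on $(0,T]\times X$, not up to $t=0$; so I would first run the comparison on $[\e,T]\times X$ for $\e>0$, using that $\f_{t,j}\to \f_{0,j}$ uniformly and $\f_\e\to\f_0\le\f_{0,j}$ in $L^1$ together with upper semicontinuity in order to control $\sup_X(\f_\e-\f_{\e,j})$ as $\e\searrow 0$, and then let $\e\to 0$. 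Second, I would compare $\f_{t,j}$ with $\psi_t$: both are smooth solutions of $(CMAF)$ on $[0,T]\times X$, the initial data satisfy $\f_{0,j}\le \psi_0+\delta_j+o(1)$ (since $\f_{0,j}\searrow\f_0\le\psi_0$ and by Dini-type reasoning the convergence is uniform once $\psi_0$ is continuous; in general one absorbs the discrepancy into the constant), so Proposition \ref{comparison} with $\lambda=C$ gives $\f_{t,j}\le e^{CT}(\delta_j+o_j(1))+\psi_t$. Combining the two inequalities yields $\f_t\le \f_{t,j}\le \psi_t+ e^{CT}(\delta_j+o_j(1))$, and letting $j\to\infty$ gives $\f_t\le\psi_t$.

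The main obstacle is the first comparison step: controlling the boundary behaviour at $t=0$ when $\f_t$ is a rough subsolution only known to converge to $\f_0$ in $L^1$. One cannot directly invoke the maximum principle of Proposition \ref{comparison} on $[0,T]\times X$ because $\f$ need not be continuous at $t=0$. The way around this is to exploit that a decreasing sequence of smooth $\omega$-psh approximants $\f_{0,j}$ of $\f_0$ with the zero Lelong number property can be chosen so that the corresponding flows $\f_{t,j}$ satisfy a uniform lower bound of the form $\f_{t,j}\ge \f_{0,j}+a t\log t - bt$ (from the a priori estimates of Section \ref{a priori estm}), so that $\liminf_{t\to0}\f_{t,j}\ge \f_0$ pointwise, while the subsolution $\f_t$ satisfies $\limsup_{t\to 0}\f_t\le \f_0$ in a suitable sense; pushing the comparison inequality on $[\e,T]$ to the limit $\e\to 0$ then closes the argument. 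A secondary technical point is making the regularization of $\f_0$ compatible with the strict positivity needed to apply Theorem A, but that is handled by the standard $\delta_j$-perturbation trick described above.
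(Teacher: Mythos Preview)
Your approach would ultimately work, but it takes a significant detour that the paper avoids entirely. The paper's proof is essentially a one-liner: since $\varphi$ and $\psi$ are both smooth on $[\epsilon,T]\times X$, Proposition~\ref{comparison} applied on that interval gives directly
\[
\varphi_t-\psi_t\le e^{\lambda T}\max\Big\{\sup_X(\varphi_\epsilon-\psi_\epsilon),\,0\Big\},
\]
and one only has to check that the right-hand side tends to~$0$ as $\epsilon\to0$. Because $(\epsilon,x)\mapsto\psi_\epsilon(x)$ is smooth up to $t=0$ (hence $\psi_\epsilon$ is continuous and $\psi_\epsilon\to\psi_0$ uniformly), while $\varphi_\epsilon\to\varphi_0$ in $L^1(X)$, Hartogs' Lemma gives $\limsup_{\epsilon\to0}\sup_X(\varphi_\epsilon-\psi_\epsilon)\le\sup_X(\varphi_0-\psi_0)\le0$, and the proof is finished.

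Your introduction of the intermediate flows $\varphi_{t,j}$ is therefore unnecessary: the ``first comparison step'' you identify as the main obstacle (comparing $\varphi_t$ with $\varphi_{t,j}$ on $[\epsilon,T]$ and controlling $\sup_X(\varphi_\epsilon-\varphi_{\epsilon,j})$ via upper semicontinuity) is exactly the same Hartogs argument, just run against $\varphi_{\epsilon,j}$ rather than $\psi_\epsilon$; since $\psi$ is already smooth up to $t=0$, nothing is gained by the detour. Two further remarks. First, the ``Dini-type reasoning'' you invoke in step~2 to get $\varphi_{0,j}\le\psi_0+o_j(1)$ is not correct as stated: Dini requires the \emph{limit} $\varphi_0$ to be continuous, which it is not in general; what you actually need there is again Hartogs' Lemma. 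Second, the zero Lelong number hypothesis on $\varphi_0$ plays no role in the paper's proof of this proposition---only the $L^1$ convergence $\varphi_t\to\varphi_0$ and the smoothness of $\psi$ on $[0,T]\times X$ are used---so your identification of it as ``the key point'' is misplaced at this stage (it matters later, for the existence of the flows themselves, but not for this comparison argument).
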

\begin{proof}
Fix $\epsilon>0$ and note that $\varphi-\psi$ is a smooth function on $[\epsilon,T]\times X$. It follows from Proposition \ref{comparison} that $$\varphi-\psi\leq e^{\lambda T}\max\,\bigg\{\sup_X(\f_\epsilon-\psi_\epsilon); 0\bigg\}.$$
We have $(\epsilon,x)\mapsto \psi_\epsilon(x)$ is smooth by assumption. Using Hartogs' Lemma and the fact that $\varphi_t$ converges to $\varphi_0$ in $L^1(X)$ as $t\rightarrow 0$, we get 
$$\sup_X(\varphi_\epsilon-\psi_\epsilon)\rightarrow \sup_X(\varphi_0-\psi_0)\leq 0\,\quad \text{as}\, \, \epsilon\rightarrow 0.$$
This implies that $\varphi_t\leq\psi_t$ for all $0\leq t \leq T$. \end{proof}
\subsection{Evans-Krylov and Schauder estimates for  Monge-Amp\`ere flow}
The Evans-Krylov and Schauder theorems for nonlinear  elliptic equations 
$$F(D^2u)=f$$
with $F$ concave, have used to show that bounds on $u,D^2u$ imply $C^{2,\alpha}$  on $u$ for some $\alpha>0$ and  higher order bounds on $u$. There are also Evans-Krylov estimates for parabolic equations (see \cite{Lie}), but the precise version which we need is as follows
\begin{thm}\label{Evan-Krylov 2}
Let $U\Subset\C^n$ be an open subset and $T\in (0,+\infty)$. Suppose that $u\in C^\infty ([0,T]\times\bar{U})$ and $(t,x,s)\mapsto f(t,x,s)$ is a function in $C^\infty ([0,T]\times\bar{U}\times \R)$, satisfy 
\begin{equation}\label{local parabolic equation}
\frac{\partial u}{\partial t}=\log \det \left( \frac{\partial^2 u}{\partial z_j\partial \bar{z}_k} \right)+f(t,x,u).
\end{equation} 
In addition, assume that there is a constant $C>0$ such that 
$$
\sup_{(0,T)\times U}\left(|u|+\left| \frac{\partial u}{\partial t}\right|+|\nabla u|+|\Delta u| \right)\leq C.$$
Then for any compact $K\Subset U$, for each $\e>0$ and $p\in \N$,
$$||u||_{C^p([\e,T]\times K)}\leq C_0.$$
where $C_0$ only depends on $C$ and $||f||_{C^q([0,T]\times \bar{U}\times [-C,C])}$ for some $q\geq p-2$.
\end{thm}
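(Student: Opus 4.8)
The plan is to reduce the parabolic complex Monge-Ampère equation to a framework where the classical parabolic Evans--Krylov theorem and parabolic Schauder estimates apply, then bootstrap. First I would recast \eqref{local parabolic equation} in the standard form $\partial_t u = G(D^2 u, t, x, u)$ where $G(M,t,x,s) = \log\det(M^{1,1}) + f(t,x,s)$, the superscript meaning we take the complex Hessian part of the real Hessian $M$. The key structural facts are that $G$ is uniformly concave in $M$ and uniformly elliptic on the relevant range: from the a priori bounds $|u|, |\partial_t u|, |\nabla u|, |\Delta u|\le C$ one gets $\log\det(u_{j\bar k}) = \partial_t u - f(t,x,u)$ bounded above and below (using $\|f\|_{C^0}$ on $[0,T]\times\bar U\times[-C,C]$), and combined with $\mathrm{tr}(u_{j\bar k}) = \Delta_{\mathrm{eucl}} u/4 \le C$ this pinches all eigenvalues of $u_{j\bar k}$ between two positive constants $0<\lambda\le\Lambda$. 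On this cone of Hermitian matrices $G$ is uniformly elliptic and concave, so the parabolic Evans--Krylov theorem (as in \cite{Lie}, or the Krylov--Safonov--type interior estimate for concave parabolic equations) yields an interior bound $\|u\|_{C^{2,\alpha}([\e,T]\times K')}\le C_1$ for some $\alpha\in(0,1)$ and any $K'\Subset U$, with $C_1$ depending only on $C$, $n$, and $\|f\|_{C^2}$ on the compact range.

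Once $u\in C^{2,\alpha}$ in space-time (parabolic Hölder), the equation becomes linear-uniformly-parabolic with Hölder coefficients: differentiating \eqref{local parabolic equation} in a spatial direction $\partial_\ell$ gives $\partial_t(\partial_\ell u) = a^{j\bar k}\partial_\ell u_{j\bar k} + (\partial_{x_\ell} f + f_s\,\partial_\ell u)$ where $a^{j\bar k} = (u_{j\bar k})^{-1}$ is now $C^\alpha$, uniformly elliptic. Linear parabolic Schauder theory then upgrades $\partial_\ell u$ to $C^{2,\alpha}$, i.e. $u\in C^{3,\alpha}$, on a slightly smaller domain and slightly later starting time; iterating this (each step differentiating the equation once more, picking up one more order of regularity of the coefficients and of $f$) gives $\|u\|_{C^{p}([\e,T]\times K)}\le C_0$ with $C_0$ depending only on $C$ and on $\|f\|_{C^q}$ with $q\ge p-2$, as claimed. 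Handling the $t$-derivatives is automatic because the equation itself expresses $\partial_t u$ in terms of spatial derivatives, so spatial regularity plus the equation gives full space-time regularity.

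The main obstacle is the first step — extracting the $C^{2,\alpha}$ estimate. The delicate points are: (i) verifying carefully that the a priori hypotheses really do confine the complex Hessian to a compact subset of the positive cone where $G$ is uniformly elliptic and concave, so that the concavity needed for Evans--Krylov is genuinely available (this uses Lemma \ref{Trace ineq}–type pinching of eigenvalues rather than just the trace bound); and (ii) invoking the correct parabolic version of the Evans--Krylov interior estimate, since one must be careful that the estimate is \emph{interior} both in space (hence the passage $U\to K\Subset U$) and in time (hence $[0,T]\to[\e,T]$), and that the constant depends only on the listed quantities and not, say, on higher norms of $u$. After that the bootstrap is routine linear parabolic Schauder theory and I would not belabor it.
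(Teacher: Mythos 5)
Your proposal follows essentially the same route as the paper's own proof: first pinch the eigenvalues of the complex Hessian using the bounds on $\partial_t u - f$ (from below/above via the determinant) together with $|\Delta u|\le C$ (from above via the trace), then invoke a parabolic Evans--Krylov type estimate on the resulting uniformly elliptic, concave equation to get interior $C^{2,\alpha}$ control, and finally bootstrap by differentiating the equation and applying linear parabolic Schauder estimates. The only cosmetic difference is that the paper invokes Gill's specialized Evans--Krylov lemma for complex Monge--Amp\`ere flows (Lemma \ref{Evans-Krylov 1}) to bound $[dd^cu]_{\alpha}$ and then derives the $C^{2,\alpha}$ bound on $Du$ via Schauder, whereas you appeal directly to the general concave parabolic Evans--Krylov theorem; the substance and the constants' dependencies are the same.
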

The proof of this theorem follows the arguments of Boucksom-Guedj 
\cite[Theorem 4.1.4]{BG13} where the function $f$ is independent of $u$.

\medskip

First of all, we recall the {\sl parabolic $\alpha$-H\"older norm} of a function $f$ on the cylinder $Q=U\times (0,T)$:
$$||f||_{C^\alpha(Q)}:=||f||_{C^0(Q)}+[f]_{\alpha,Q},$$
where $$[f]_{\alpha, Q}:=\sup_{X,Y\in Q, X\neq Y}\frac{|f(X)-f(Y)|}{\rho^\alpha(X,Y)}$$
is the {\sl $\alpha$-H\"older seminorm with respect to} the {\sl parabolic distance}
$$\rho\big((x,t),(x',t')\big)=|x-x'|+|t-t'|^{1/2}.$$ 
For each $k\in \N$, the {\sl $C^{k,\alpha}$-norm} is defined as
$$||f||_{C^{k,\alpha}(Q)}:=\sum_{|m|+2j\leq k}||D^m_xD^j_tf||_{C^\alpha(Q)}.$$ 
If $(\omega_t)_{t\in (0,T)}$ is a path of differential forms on $U$, we can similarly define $[\omega_t]_{\alpha,Q}$ and $||\omega_t||_{C^{k,\alpha}(Q)}$, with respect to the flat metric $\omega_U$ on $U$.
\medskip

The first ingredient in the proof of Theorem \ref{Evan-Krylov 2} is the Schauder estimates for linear parabolic equations.
\begin{lem}\label{second order schauder}$($\cite[Theorem 8.11.1]{Kry},\cite[Theorem 4.9]{Lie}$)$ 
Let $(\omega_t)_{t\in(0,T)}$ be a smooth path of K\"ahler metrics on $U$ and $\omega_U$ be the flat metric on $U$. Define $Q=U\times (0,T)$, and assume that $u,g\in C^\infty(Q)$ satisfy
$$\left(\frac{\partial }{\partial t}-\Delta_t  -c(t,x)\right)u(t,x)=g(t,x),$$
where $\Delta_t$ is the Laplacian with respect to $\omega_t$. Suppose also that there exist $C>0$ and $0<\alpha<1$ such that on $Q$ we have 
$$C^{-1}\omega_U\leq \omega_t\leq C\omega_U ,\, ||c||_{C^\alpha(Q)}\leq C\text{ and }\, [\omega_t]_{\alpha,Q}\leq C.$$
Then for each $Q'=U'\times (\e,T)$ with $U'\Subset U$, we can find a constant $A$ only depending on $U'$, $\e$ and $C$ such that 
$$||u||_{C^{2,\alpha}(Q)}\leq A(||u||_{C^0(Q)})+||g||_{C^\alpha(Q)}).$$
\end{lem}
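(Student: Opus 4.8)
The plan is to reduce the statement to the standard interior Schauder estimate for linear parabolic equations with Hölder coefficients, as found in \cite[Theorem 8.11.1]{Kry} or \cite[Theorem 4.9]{Lie}. First I would rewrite the equation in non-divergence form: expanding $\Delta_t u = g^{j\bar k}(t,x)\,\partial_j\partial_{\bar k}u$, where $(g^{j\bar k})$ is the inverse of the metric coefficients $(g_{j\bar k})$ of $\omega_t$, the equation becomes
$$
\partial_t u - a^{j\bar k}(t,x)\,\partial_j\partial_{\bar k}u - c(t,x)\,u = g(t,x),
$$
a uniformly parabolic equation whose principal coefficients $a^{j\bar k}=g^{j\bar k}$ satisfy, by the two-sided bound $C^{-1}\omega_U\le \omega_t\le C\omega_U$, the uniform ellipticity $C^{-2}|\xi|^2 \le a^{j\bar k}\xi_j\bar\xi_k \le C^2|\xi|^2$. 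Moreover, since $[\omega_t]_{\alpha,Q}\le C$ and the entries are bounded away from the degenerate locus, matrix inversion is Lipschitz on the relevant compact set of matrices, so $[a^{j\bar k}]_{\alpha,Q}$ is controlled by $C$ as well; the zeroth order coefficient satisfies $\|c\|_{C^\alpha(Q)}\le C$ by hypothesis.

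Next I would invoke the interior parabolic Schauder estimate directly: for a solution of such an equation on the cylinder $Q=U\times(0,T)$ with $C^\alpha$ coefficients, uniform ellipticity constant and Hölder norms bounded by $C$, one has on any subcylinder $Q'=U'\times(\e,T)$ with $U'\Subset U$ the bound
$$
\|u\|_{C^{2,\alpha}(Q')} \le A\big(\|u\|_{C^0(Q)} + \|g\|_{C^\alpha(Q)}\big),
$$
where $A$ depends only on $U'$, $\e$, $\alpha$ and $C$. This is exactly the cited theorem; the shrinking of the domain in both space (from $U$ to $U'$) and time (from $0$ to $\e$) is what allows one to dispense with any control on the initial or boundary values of $u$. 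I would remark that although the cited references are stated for real second order parabolic equations, the complex Laplacian $\Delta_t$ is, up to the constant factor coming from $d^c$, a real elliptic operator $\sum a^{j\bar k}\partial_j\partial_{\bar k}$ of the type covered, so no genuine adaptation is needed.

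The only point requiring a small argument is the Hölder control on the inverse coefficients $g^{j\bar k}$, and this is the step I would expect to spell out: on the set of Hermitian matrices $M$ with $C^{-1}\mathrm{Id}\le M\le C\,\mathrm{Id}$, the inversion map $M\mapsto M^{-1}$ is Lipschitz with constant depending only on $C$ (via $\|M^{-1}-N^{-1}\|=\|M^{-1}(N-M)N^{-1}\|\le C^2\|M-N\|$), whence composing with the $C^\alpha$ map $(t,x)\mapsto (g_{j\bar k}(t,x))$ gives $[g^{j\bar k}]_{\alpha,Q}\le C^2\,[\omega_t]_{\alpha,Q}\le C^3$. There is no serious obstacle beyond this bookkeeping: once the equation is in standard form with the stated coefficient bounds, the result is an off-the-shelf consequence of classical linear parabolic theory.
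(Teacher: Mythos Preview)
Your proposal is correct: the paper does not prove this lemma at all but simply cites it from \cite[Theorem 8.11.1]{Kry} and \cite[Theorem 4.9]{Lie}, treating it as a black-box input to the proof of Theorem \ref{Evan-Krylov 2}. Your write-up supplies exactly the bookkeeping needed to see that the hypotheses of those cited theorems are met (uniform parabolicity from the two-sided metric bound, H\"older control on the inverse coefficients via the Lipschitz property of matrix inversion on a compact set), which is more detail than the paper itself provides.
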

The second ingredient in the proof Theorem \ref{Evan-Krylov 2} is the following Evans-Krylov estimates type for complex Monge-Amp\`ere flows. 
\begin{lem}\label{Evans-Krylov 1}$($\cite[Theorem 4.1]{Gil11}$)$
Suppose $u,g\in C^\infty(Q)$ satisfy
$$\frac{\partial u}{\partial t}=\log\det
\frac{\partial^2u}{\partial z_{j}\partial\bar{z}_k}+g(t,x),$$
and assume also that there exists a constant $C>0$ such that
$$C^{-1}\leq \left( \frac{\partial^2u}{\partial z_j\partial \bar{z}_k}\right)\leq C\, \text{ and }\, \left|\frac{\partial g}{\partial t}\right|+|dd^c g|\leq C.$$
Then for each $Q'=U'\times (\e,T)$ with $U'\Subset U$ an open subset and $\e\in(0,T)$, we can find $A>0$ and $0<\alpha<1$ only depending on $U',\e$ and $C$ such that
$$[dd^cu]_{\alpha,Q'}\leq A.$$  
\end{lem}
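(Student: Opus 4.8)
The plan is to run the parabolic analogue of the Evans--Krylov argument, whose two ingredients are the concavity of $M\mapsto\log\det M$ on positive Hermitian matrices and the Krylov--Safonov theory (local maximum principle and weak Harnack inequality) for linear parabolic equations with merely measurable coefficients. Write $F(M)=\log\det M$, so $F$ is concave, $F^{ij}(M)$ is the inverse matrix of $M$, and on the range $C^{-1}I\le(u_{j\bar k})\le CI$ the linearised operator $\mathcal L:=\partial_t-F^{ij}(u)\,\partial_{z_i}\partial_{\bar z_j}$ is uniformly parabolic with constants depending only on $n$ and $C$. A first, easy step is to differentiate the equation in $t$: this gives $\mathcal L(\partial_t u)=\partial_t g$, hence $|\mathcal L(\partial_t u)|\le C$, and the parabolic Krylov--Safonov Harnack inequality (applied to $\partial_t u$ as both a sub- and a super-solution) yields $[\partial_t u]_{\alpha_0,Q''}\le A_0$ on any cylinder $Q''$ sitting between $Q'$ and $Q$, with $\alpha_0\in(0,1)$ and $A_0$ depending only on $n,C,U',\e$. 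This is where the hypothesis $|\partial_t g|\le C$ is used.

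Next, for a unit vector $\gamma\in\C^n$ set $w_\gamma:=u_{\gamma\bar\gamma}=\sum_{j,k}\gamma^j\overline{\gamma^k}\,u_{j\bar k}$. Differentiating the equation once by $\partial_\gamma$ and once by $\partial_{\bar\gamma}$, the second--variation term of $F$ has a sign by concavity, and dropping it leaves $\mathcal L(w_\gamma)\le g_{\gamma\bar\gamma}\le C$; moreover $C^{-1}\le w_\gamma\le C$ pointwise (here $|dd^c g|\le C$ is used). Now fix a finite family of unit vectors $\gamma_1,\dots,\gamma_N\in\C^n$ such that every Hermitian matrix $P$ with $C^{-1}I\le P\le CI$ can be written $P=\sum_{s}\beta_s(P)\,\gamma_s\otimes\bar\gamma_s$ with the $\beta_s(P)$ ranging over a fixed compact subinterval of $(0,\infty)$ depending only on $n,C$. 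Given a backward parabolic cylinder $Q_{2\rho}\Subset Q$ about an interior point, apply this decomposition to $P=F^{ij}(u(y))$: the concavity inequality $F(u(x))-F(u(y))\le F^{ij}(u(y))\bigl(u_{i\bar j}(x)-u_{i\bar j}(y)\bigr)$ becomes
\[
(\partial_t u-g)(x)-(\partial_t u-g)(y)\ \le\ \sum_{s=1}^{N}\beta_s(y)\bigl(w_{\gamma_s}(x)-w_{\gamma_s}(y)\bigr),\qquad x,y\in Q_\rho .
\]
On the other hand each $M_s-w_{\gamma_s}$, $M_s:=\sup_{Q_\rho}w_{\gamma_s}$, is nonnegative and satisfies $\mathcal L(M_s-w_{\gamma_s})\ge-C$, so the parabolic weak Harnack inequality bounds the average of $M_s-w_{\gamma_s}$ over a sub--cylinder by $\inf(M_s-w_{\gamma_s})+C\rho^2$. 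Feeding the concavity inequality into this --- via the standard dichotomy according to whether the bulk of $\sum_s\operatorname*{osc}_{Q_\rho}w_{\gamma_s}$ sits near the sup or near the inf of the $w_{\gamma_s}$, and absorbing $\operatorname*{osc}_{Q_\rho}(\partial_t u-g)$, which by the first step and the smoothness of $g$ is $O(\rho^{\alpha_0})$ --- yields constants $\delta=\delta(n,C)\in(0,1)$ and $\alpha_1>0$ with
\[
\sum_{s=1}^{N}\operatorname*{osc}_{Q_{\rho/4}}w_{\gamma_s}\ \le\ \delta\sum_{s=1}^{N}\operatorname*{osc}_{Q_\rho}w_{\gamma_s}\ +\ C\rho^{\alpha_1}.
\]

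Iterating this oscillation--decay estimate along the dyadic family $Q_{4^{-m}\rho}$ gives $\sum_s\operatorname*{osc}_{Q_r}w_{\gamma_s}\le A\,r^{\alpha}$ for some $\alpha\in(0,1)$ and $A$ depending only on $n,C,U',\e$ (the last two controlling how large a cylinder may be placed inside $Q$ above a point of $Q'$). Since the $N$ rank--one matrices $\gamma_s\otimes\bar\gamma_s$ span the space of Hermitian matrices, each entry $u_{j\bar k}$ is a fixed linear combination of the $w_{\gamma_s}$, hence enjoys the same parabolic $\alpha$--Hölder bound on $Q'$; that is exactly $[dd^c u]_{\alpha,Q'}\le A$.

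The genuinely delicate step is the oscillation--decay estimate: extracting a contraction factor $\delta<1$ requires carefully combining the purely one--sided concavity relation among the $w_{\gamma_s}$ with the weak Harnack inequality for each $M_s-w_{\gamma_s}$, and keeping honest track of the lower--order term $\operatorname*{osc}(\partial_t u-g)$ --- which is precisely why the hypotheses are $|\partial_t g|+|dd^c g|\le C$: the first makes $\partial_t u$ Hölder, the second makes $w_\gamma$ a subsolution. One must also work throughout with \emph{backward} parabolic cylinders and check that the Krylov--Safonov constants stay uniform as long as one remains a fixed parabolic distance from $\partial U$ and from $\{t=0\}$; granting these points, the argument is the parabolic transcription of Evans--Krylov carried out in \cite[Theorem 4.1]{Gil11}, following the scheme of \cite[Theorem 4.1.4]{BG13} for the case when $g$ is independent of $u$.
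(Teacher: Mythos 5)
The paper does not prove this lemma: it is quoted directly from Gill (\cite[Theorem 4.1]{Gil11}) and used as a black box in the proof of Theorem \ref{Evan-Krylov 2}. Your sketch therefore reconstructs the cited argument rather than competing with one in the text, and it is a correct outline of the parabolic Evans--Krylov scheme: differentiate in $t$ and apply Krylov--Safonov to $\partial_t u$ (this is exactly where $|\partial_t g|\le C$ enters); differentiate twice in a direction $\gamma$ and drop the sign-definite second-variation term of the concave function $\log\det$ to make $w_\gamma=u_{\gamma\bar\gamma}$ a subsolution; combine the rank-one decomposition of the inverse complex Hessian with the concavity inequality and the weak Harnack inequality to get oscillation decay; iterate. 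Two small corrections to your write-up. The parenthetical ``(here $|dd^c g|\le C$ is used)'' is attached to the pointwise bound $C^{-1}\le w_\gamma\le C$, which in fact is immediate from $C^{-1}I\le(u_{j\bar k})\le CI$ alone; the hypothesis $|dd^c g|\le C$ is used only to get $g_{\gamma\bar\gamma}\le C$ on the right-hand side of the subsolution inequality $\mathcal L(w_\gamma)\le C$. And the closing reference to \cite[Theorem 4.1.4]{BG13} is misplaced: that result is what the paper's Theorem \ref{Evan-Krylov 2} emulates (where $f$ does not depend on $u$), while the present lemma is precisely Gill's Theorem 4.1.
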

\begin{proof}[Proof of Theorem \ref{Evan-Krylov 2}.]
In the sequel of the proof, we say that a constant is under control if it is bounded by the terms of $C,\e$ and $||f||_{C^q([0,T]\times \bar{U}\times[-C,C])}$.

\medskip
Consider  the path $\omega_t:=dd^cu_t$ of K\"ahler forms on $U$. Denote by  $\omega_U$ the flat metric on $U$. It follows from \ref{local parabolic equation} that 
$$\omega^n_t=\exp\left(\frac{\partial u}{\partial t}-f\right)\omega^n_U.$$
Since $\frac{\partial u}{\partial t}-f$ is bounded by a constant under control by the assumption, there exists a constant $C_1$ under control such that $C_1^{-1}\omega_U^n\leq \omega^n_t\leq C_1\omega^n_U$. It follows from the ssumption that $\tr_{\omega_U}\omega_t$ is bounded. Two latter inequalities imply that $C^{-1}_2\omega_U\leq \omega_t\leq C_2\omega_U$ for some $C_2>0$ under control by considering inequalities of eigenvalues. Set $g(t,x):=f(t,x,u)$. Since $C^{-1}_2\omega_U\leq \omega_t\leq C_2\omega_U$ and
$$
\sup_{(0,T)\times U}\left(|u|+\left| \frac{\partial u}{\partial t}\right|+|\nabla u|+|\Delta u| \right)\leq C,$$
 we get $ \left|\frac{\partial g}{\partial t}\right|+|dd^c g|\leq C_3$ with $C_3$ under control. Apply Lemma \ref{Evans-Krylov 1} to (\ref{local parabolic equation}), we obtain $[dd^c u]_{\alpha,Q}$ is under control for some $0<\alpha< 1$.
\medskip

Let $D$ be any first order differential operator with constant coefficients. Differentiating (\ref{local parabolic equation}), we get 
\begin{equation}\label{differentiate 1}
\left( \frac{\partial }{\partial t}-\Delta_t-\frac{\partial f}{\partial s}\right)Du=Df,
\end{equation}
with $|u|+|\nabla u|+\left|\frac{\partial u}{\partial t}\right|+|\Delta u|$ and $[dd^cu]_{\alpha,Q}$ are under control, so  $C^0$ norm of $Du$ is under control. Applying the parabolic Schauder estimates (Lemma \ref{second order schauder}) to \ref{differentiate 1} with $c(t,x)=\frac{\partial f}{\partial s}(t,x,u)$, the $C^{2,\alpha}$ norm of $Du$ is thus under control. Apply $D$ to  (\ref{differentiate 1}) we get
\begin{eqnarray*}
\left( \frac{\partial }{\partial t}-\Delta_t-\frac{\partial f}{\partial s}\right)D^2u &=& D^2f+\frac{\partial (Df)}{ds}Du+\sum_{j,k}(D\omega_t^{jk})\frac{\partial^2 Du}{\partial z_j\partial\bar{z}_k}\\
&&+\frac{\partial^2f}{\partial s^2}|Du|^2+D\left(\frac{\partial f}{\partial s}\right)Du,
\end{eqnarray*}
where the parabolic $C^\alpha$ norm of the right-hand side is under control. Thanks to the parabolic Schauder estimates \ref{second order schauder}, the $C^{2,\alpha}$ norm of $D^2u$ is under control. Iterating this procedure we complete the proof of Theorem \ref{Evan-Krylov 2}.
\end{proof}

\subsection{Monge-Amp\`ere capacity}
\begin{defi}
Let $K$ be a Borel subset of $X$. We set
$$Cap_\omega(K)=\sup\left\{\int_K MA(\f);\, \f\in PSH(X,\omega),0\leq \f\leq 1 \right\}.$$ Then we call $Cap_\omega$ is {\sl the Monge-Amp\`ere capacity with respect to $\omega
$}.
\end{defi}
\begin{defi}\label{def conv in cap}
Let $(\f_j)\in PSH(X,\omega)$. We say that $(\f_j)$ converges to $\f$ as $j\rightarrow +\infty$ {\sl in capacity} if for each $\e>0$
$$\lim_{j\rightarrow +\infty}Cap_{\omega}(|\f_j-\f|<\e)=0.$$
\end{defi}
The following Proposition \cite[Proposition 3.7]{GZ05} states that decreasing sequences of $\omega$-psh functions converge in capacity.
\begin{thm}\label{conv in cap}
Let $\f,\f_j\in PSH(X,\omega)\cap L^{\infty}(X)$ such that $(\f_j)$ decreases to $\f$, then for each $\e>0$ 
$$Cap_\omega(\left\{\f_j>\f+\e \right\})\rightarrow 0 \quad \text{ as } j \rightarrow +\infty.$$

\end{thm}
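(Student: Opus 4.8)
The final statement is Theorem~\ref{conv in cap}: a decreasing sequence $\f_j\searrow\f$ of bounded $\omega$-psh functions converges to $\f$ in capacity, i.e.\ $Cap_\omega(\{\f_j>\f+\e\})\to 0$.

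The plan is to combine the monotone convergence of Monge-Amp\`ere measures under decreasing sequences with a Chebyshev-type inequality in capacity. First I would reduce to the case where all functions are normalized, say $0\le\f\le\f_j\le M$ on $X$; this is harmless since $\f_1$ is bounded and adding constants does not affect capacity. Then fix $u\in PSH(X,\omega)$ with $0\le u\le 1$. On the open set $U_j:=\{\f_j>\f+\e\}$ I want to bound $\int_{U_j}MA(u)$ in terms of quantities that tend to $0$. The key comparison is a local version of the comparison principle for the Monge-Amp\`ere operator: on $U_j$ one has $\f_j > \f + \e u$ (since $\e u\le\e$), while outside $U_j$ the inequality $\f_j\ge\f+\e u$ may fail but $\f_j\ge\f$ everywhere, so one works with $\max(\f_j,\f+\e u)=\f_j$ near $\partial U_j$ and applies the comparison principle to $\f_j$ versus $\f+\e u$ on $X$. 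This gives
$$
\e^n\int_{U_j}MA(u)=\int_{\{\f+\e u>\f_j\}}(\omega+dd^c(\e u))^n\le\int_{\{\f+\e u>\f_j\}}(\omega+dd^c\f_j)^n\le\int_{\{\f<\f_j\}\setminus\{\f=-\infty\}}MA(\f_j),
$$
using that $\{\f+\e u>\f_j\}\subset\{\f>\f_j-\e\}\subset\{\f>\f-\e\}$... more carefully one uses that on $\{\f+\e u>\f_j\}$ we have $\f_j<\f+\e\le\f_j$ forces this set to lie inside $U_j$ only after rearranging; the clean route is: on $\{\f+\e u>\f_j\}$, comparison gives $MA(\f_j)\le(\omega+dd^c(\f+\e u))^n$, and then integrate.

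The second step is to pass to the limit. Since $\f_j\searrow\f$, the measures $\mathbf{1}_{\{\f_j<\f+\e\}}$ shrink and, more importantly, $MA(\f_j)\to MA(\f)$ weakly by the standard continuity of the Monge-Amp\`ere operator along monotone sequences of bounded psh functions (Bedford--Taylor). Combined with the fact that $\{\f_j<\f+\e\}\downarrow\emptyset$ as $j\to\infty$ (because $\f_j\searrow\f$ pointwise, so for each $x$ eventually $\f_j(x)<\f(x)+\e$ fails... wait, it's the reverse — $\f_j(x)\to\f(x)$ so eventually $\f_j(x)-\f(x)<\e$, meaning $x\notin U_j$; hence $U_j\downarrow$ and $\bigcap_j U_j\subset\{\f=-\infty\}$ which is pluripolar and has zero Monge-Amp\`ere mass for bounded potentials). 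Therefore $\int_{U_j}MA(\f_{j})\to 0$; feeding this back into the inequality of Step~1 and taking the supremum over $u$ yields $Cap_\omega(U_j)\le\e^{-n}\int_{U_j}MA(\f_j)+o(1)\to 0$.

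The main obstacle is making the comparison-principle step rigorous on the possibly irregular open set $U_j=\{\f_j>\f+\e\}$: one cannot blindly apply a Dirichlet-type comparison principle on $X$ to functions that do not agree on any boundary. The standard fix is to not localize at all but to use the global comparison principle on the \emph{compact} manifold $X$ for the two bounded $\omega$-psh functions $\f_j$ and $\f+\e u$, in the form $\int_{\{\f+\e u>\f_j\}}MA(\f_j)\le\int_{\{\f+\e u>\f_j\}}MA(\f+\e u)$, which is exactly \cite[Corollary~2.3]{GZ05}-type statement (the comparison principle of Bedford--Taylor--Guedj--Zeriahi on compact K\"ahler manifolds). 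Expanding $MA(\f+\e u)$ and discarding all terms but $\e^n MA(u)$ using positivity, and noting $\{\f+\e u>\f_j\}\subset U_j$ after absorbing the harmless $\e u\in[0,\e]$, closes the argument. I should also double check the measure-zero subtlety: $MA(\f_j)$ puts no mass on the pluripolar set $\{\f=-\infty\}$ since $\f_j$ is bounded, so the limit set $\bigcap_j U_j$ genuinely contributes nothing.
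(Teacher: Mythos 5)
Note first that the paper does not prove this statement; it quotes it from \cite[Proposition 3.7]{GZ05}. Your proposal contains a fatal error in the set bookkeeping. Since $0\le u\le 1$ forces $\e u\le\e$, you have $\{\f+\e u>\f_j\}\subset\{\f+\e>\f_j\}$, which is \emph{disjoint} from $U_j=\{\f_j>\f+\e\}$: any point where $\f+\e u>\f_j$ satisfies $\f_j<\f+\e u\le\f+\e$ and so lies outside $U_j$. Your claimed inclusion $\{\f+\e u>\f_j\}\subset U_j$ is therefore exactly reversed, and the displayed line $\e^n\int_{U_j}MA(u)=\int_{\{\f+\e u>\f_j\}}(\omega+dd^c(\e u))^n$ equates integrals over essentially complementary sets; it also silently uses the false identity $(\omega+dd^c(\e u))^n=\e^nMA(u)$, whereas one only has $(\omega+dd^c(\e u))^n=\bigl((1-\e)\omega+\e(\omega+dd^cu)\bigr)^n\ge\e^nMA(u)$. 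Moreover $\f+\e u$ is $(1+\e)\omega$-psh but not $\omega$-psh in general, so the comparison principle in $PSH(X,\omega)$ does not apply to the pair $(\f_j,\f+\e u)$; and even with a legitimate $\omega$-psh competitor, the comparison principle runs the wrong way for your purpose, since it bounds the Monge--Amp\`ere mass of the \emph{larger} function on the set where it dominates by that of the \emph{smaller} one. On $U_j$ the larger function is $\f_j$, so comparison controls $MA(\f_j)$ by $MA(\f)$ there and never produces the quantity $\int_{U_j}MA(u)$ you need to estimate.

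There is also a gap at the final step: $U_j\downarrow\emptyset$ together with $MA(\f_j)\rightharpoonup MA(\f)$ does not give $\int_{U_j}MA(\f_j)\to 0$, because the sets $U_j$ vary with $j$ and are neither open nor closed, so weak convergence gives you no control on those integrals. A correct route is the Chebyshev inequality $\int_{U_j}MA(u)\le\e^{-1}\int_X(\f_j-\f)\,MA(u)$ followed by an iterated integration-by-parts (Chern--Levine--Nirenberg) estimate showing that $\sup\bigl\{\int_X(\f_j-\f)\,MA(u):u\in PSH(X,\omega),\ 0\le u\le 1\bigr\}\to0$; this is essentially how \cite{GZ05} argues.
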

\subsection{Monge-Amp\`ere energy}\label{MA energy}
The energy of a $\omega$-psh function has been introduced in \cite{GZ07} and further studied in  \cite{BBGZ}.
For $\phi\in PSH(X,\omega)\cap L^\infty(X)$, the {\sl Aubin-Yau energy functional} is 
$$E(\phi):=\frac{1}{(n+1)V}\sum_{j=0}^n
\int_X\phi (\omega+dd^c\phi)^j\wedge \omega^{n-j},$$
where $$V:=\int_X\omega^n.$$
For any $\phi\in PSH(X,\omega)$, we set 
$$E(\phi):=\inf\big\{E(\psi);\,\psi\in  PSH(X,\omega)\cap L^\infty(X), \phi\leq \psi\big\}.$$
\begin{defi}
We say that $\phi\in PSH(X,\omega)$ has a finite energy if $E(\phi)>-\infty$ and denote by $\mathcal{E}^1(X,\omega)$ the set of all finite energy $\omega$-psh functions.
\end{defi}
Let $(\theta_t)_{t\in [0,T]}$ be a family of K\"ahler metrics on $X$ and $\Omega$ be a smooth volume form. We consider the following complex Monge-Amp\`ere flow
\begin{equation*}
(CMAF) \hskip1cm \left\{\aligned
& \dfrac{\partial \f}{\partial t}=\log\dfrac{(\theta_t+dd^c\f)^n}{\Omega}-F(t,z,\f),  \\ 
 \\
&\f(0,.)=\f_0.
\endaligned \right. 
\end{equation*}  
We set $\omega_t=\theta_t+dd^c\f_t$. 
\begin{defi}
Suppose $\f_t$ is a solution of $(CMAF)$. The {\sl energy} for $\f_t$ is
$$E(\f_t):=E_{\theta_t}(\f_t):=\frac{1}{(n+1)V}\sum_{j=0}^n
\int_X\f_t (\theta_t+dd^c\f_t)^j\wedge \theta_t^{n-j}
.$$
In particular, when $\theta_t=\omega$ for all $t\in [0,T]$ we get the Aubin-Yau energy functional.
\end{defi}
\subsection{Reduction to $\frac{\partial F}{\partial s}\geq 0$}\label{reduction}
We now consider the complex Monge-Amp\`ere flow
$$
\hskip-2cm
(CMAF) \hskip2cm \frac{\partial \f_t}{\partial t}=\log  \frac{(\theta_t+dd^c \f_t)^n}{\Omega} -F(t,z,\f),
$$
where $ F(t,z,s)\in C^\infty([0,T]\times X\times \R,\R)$ with 
$$\frac{\partial F}{\partial s}\geq -C,$$ for some $C\geq 0$. 
\medskip 

First of all, we observe that it is sufficient to prove Theorem A with $F$ satisfying $ F(t,z,s)\in C^\infty([0,T]\times X\times \R,\R)$ and $s\mapsto F(t,z,s)$ is non-decreasing. Indeed, assume that $\f_t$ is a solution of $(CMAF)$ with $\partial F/\partial s\geq-C$. By changing variables $$\phi(t,z)=e^{Bt}\varphi\big(B^{-1}(1-e^{-Bt}),z\big),$$ we get 
$$
\frac{\partial \phi_t}{\partial t}=\log  \frac{(\tilde{\theta}_t+dd^c \phi)^n}{\Omega}-\tilde{F}(t,z,\phi_t),
$$
where $\tilde{\theta}_t=e^{Bt}\theta_\frac{1-e^{-Bt}}{B}$ and $$\tilde{F}(t,z,s)=-Bs +Bnt+F\big(B^{-1}(1-e^{-Bt}), z,e^{-Bt}s\big).$$ We thus have 
$$\frac{\partial \tilde{ F} }{\partial s}= -B+\frac{\partial F}{\partial s}e^{-Bt}\geq -B-Ce^{-Bt}.$$
Choosing $B<0$ such that $-B-Ce^{-Bt}\geq 0 $ or $-Be^{Bt}\geq C$ for all $t\in [0,T]$, we get the desired equation. Note that we can not always choose $B$ for any $T>0$ because the maximal value of $-Be^{BT}$ is $1/eT$ at $B=-1/T$, but in our case we can assume $T$ is small enough such that $C< 1/eT$. 
Finally we obtain the equation 
$$
\frac{\partial \phi_t}{\partial t}=\log  \frac{(\tilde{\theta}_t+dd^c \phi)^n}{\Omega}-\tilde{F}(t,z,\phi_t),
$$
where $\phi(0,z)=\f_0$ and $\partial \tilde{F}/\partial s\geq0$.

\subsection{Strategy of the proof}
We fix $\omega$ a reference K\"ahler form. Since we are interested in the behavior near 0 of the flow, we can assume that for $0\leq t\leq T$ 
\begin{equation}\label{asume 1}
\dfrac{\omega}{2}\leq\theta_t\leq 2\omega,
\end{equation}
and there exists $\delta>0$ such that 
$$\delta^{-1}\Omega\leq \theta_t^n\leq \delta\Omega, \ \forall t\in [0,T].$$

We consider the complex Monge-Amp\`ere flow
$$
\hskip-2cm
(CMAF) \hskip2cm \frac{\partial \f_t}{\partial t}=\log  \frac{(\theta_t+dd^c \f_t)^n}{\Omega} -F(t,z,\f),
$$
where $ F(t,z,s)\in C^\infty([0,T]\times X\times \R,\R)$ is such that 
$\frac{\partial F}{\partial s}\geq -C$, for some $C\geq 0$.
Our first goal is to show the following generalization of \cite{GZ13, DiNL14}:
\begin{thm}\label{main theorem}
Let $\f_0$ be a $\omega$-psh function with zero Lelong numbers. There exists a family of smooth strictly $\theta_t-psh$ function $(\varphi_t)$ such that
$$
\frac{\partial \f_t}{\partial t}=\log  \frac{(\theta_t+dd^c \f_t)^n}{\Omega} -F(t,z,\f_t)
$$
in $(0, T]\times X,$ with $\varphi_t\rightarrow \varphi_0$ in $L^1(X),$ as $t\searrow 0^+$. This family is unique if $C=0$ and $|\frac{\partial F}{\partial t}|<C'$ for some $C'>0$. Moreover, $\varphi_t\rightarrow \varphi$ in energy if $\varphi\in {\mathcal E}^1(X,\omega)$ and $\varphi_t$ is uniformly bounded and converges to $\varphi_0$ in capacity if $\varphi_0\in L^\infty(X).$
\end{thm}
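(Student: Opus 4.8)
The plan is to obtain $(\varphi_t)$ as a limit of solutions of the regularized flows, using the a priori estimates that the paper establishes in Section~\ref{a priori estm} together with the maximum-principle tools already recorded above. First I would regularize the initial data: since $\varphi_0$ is $\omega$-psh with zero Lelong number at every point, Demailly's regularization theorem provides a decreasing sequence $\varphi_{0,j}\in PSH(X,\omega)\cap C^\infty(X)$ (or at least $\omega$-psh with analytic singularities and then smoothed) with $\varphi_{0,j}\searrow\varphi_0$, and the zero-Lelong-number hypothesis guarantees the uniform integrability/exponential estimates $\int_X e^{-\lambda(\varphi_{0,j}-\varphi_0)}\,\omega^n\le C_\lambda$ for every $\lambda>0$, which is exactly what is needed to control the Monge--Amp\`ere masses in the a priori estimates. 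For each $j$, the parabolic Cauchy problem with smooth strictly $\theta_t$-psh initial data $\varphi_{0,j}$ has a smooth solution $\varphi_{t,j}$ on $[0,T]\times X$ by the standard short-time existence plus the a priori estimates preventing blow-up (I would invoke the regular theory; after the reduction in Section~\ref{reduction} we may assume $\partial F/\partial s\ge 0$, so Proposition~\ref{comparison} applies with $\lambda=0$).

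Next I would extract the convergence. By Proposition~\ref{comparison} (monotonicity in the initial data), the sequence $t\mapsto\varphi_{t,j}$ is decreasing in $j$ for each fixed $t$, so $\varphi_t:=\lim_{j\to\infty}\varphi_{t,j}$ exists pointwise; one checks $\varphi_t\not\equiv-\infty$ and $\varphi_t\in PSH(X,\theta_t)$ as a decreasing limit. The crucial input is a \emph{uniform} (in $j$) family of estimates on compact subsets $[\e,T]\times X$: a uniform $L^\infty$ bound on $\varphi_{t,j}$ and on $\dot\varphi_{t,j}$ (from a barrier/maximum-principle argument of the form $\varphi_{t,j}\le At+B$ near $t=0$, combined with a lower bound built from $\varphi_{0,j}$ and the Kolodziej-type $L^\infty$ estimate using the exponential integrability above), then a uniform Laplacian estimate $\Delta_\omega\varphi_{t,j}\le C(\e)$ via Proposition~\ref{Laplace ineq} applied to $\omega_{t,j}$ with the Siu/Aubin--Yau computation, and finally the uniform higher-order bounds from Theorem~\ref{Evan-Krylov 2}. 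These give $\varphi_{t,j}\to\varphi_t$ in $C^\infty_{loc}((0,T]\times X)$, so $\varphi_t$ is smooth and solves $(CMAF)$ on $(0,T]\times X$, and strict positivity of $\omega_t=\theta_t+dd^c\varphi_t$ follows from the Laplacian lower bound. For the convergence $\varphi_t\to\varphi_0$ in $L^1$ as $t\searrow0$: from above, $\limsup_{t\to0}\varphi_t\le\varphi_0$ follows by letting $j$ first then using $\varphi_{t,j}\le\varphi_{0,j}+At$; from below, one uses a subsolution constructed from $\varphi_0$ (convolve, or use $\rho*\varphi_0$ minus a loss term) together with Proposition~\ref{weak comparison}, giving $\liminf_{t\to0}\varphi_t\ge\varphi_0$ a.e., and then Hartogs/monotone convergence upgrades this to $L^1$.

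For the refined convergence statements: if $\varphi_0$ is continuous, Dini's theorem turns $\varphi_{0,j}\searrow\varphi_0$ into uniform convergence, and propagating the modulus of continuity through Proposition~\ref{comparison} (comparing $\varphi_{t,j}$ with $\varphi_{t,k}$ shifted by $\|\varphi_{0,j}-\varphi_{0,k}\|_\infty$) plus the $t\to0$ barrier gives $\varphi_t\to\varphi_0$ in $C^0(X)$. If $\varphi_0$ is merely bounded, one compares with solutions issued from $\max(\varphi_{0,j},\varphi_0-\e)$ type data and invokes Theorem~\ref{conv in cap} (decreasing sequences converge in capacity) together with a quasi-continuity argument to get convergence in capacity. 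If $\varphi_0\in\mathcal E^1(X,\omega)$, one first checks the uniform $L^\infty$ bound claim, then shows $E_{\theta_t}(\varphi_t)\to E(\varphi_0)$: monotonicity of the energy functional gives $\liminf_{t\to0}E(\varphi_t)\ge$ (something $\ge E(\varphi_0)$ after controlling the $\theta_t$-dependence via \eqref{asume 1}), while an upper semicontinuity/cocycle estimate for $E$ along decreasing sequences gives the matching $\limsup$, and convergence in energy then follows from the standard fact that $L^1$ convergence plus convergence of energies implies convergence in energy in $\mathcal E^1$.

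The main obstacle I expect is making the a priori estimates genuinely \emph{uniform in $j$ down to $t=0$}, i.e.\ independent of the regularization: the $L^\infty$ and $\dot\varphi$ bounds near $t=0$ must be driven purely by the zero-Lelong-number hypothesis on $\varphi_0$ (through exponential integrability of $e^{-\varphi_0}$) and not by $\|\varphi_{0,j}\|_{C^2}$, which blows up. This is exactly the technical heart of \cite{GZ13,DiNL14}, and the novelty here is carrying it through with a general nonlinearity $F(t,z,s)$ rather than $F=F(z)$; the $s$-dependence enters the Laplacian estimate (an extra $\partial F/\partial s$ and $dd^c_x F$ term) and the $\dot\varphi$ estimate (differentiating $(CMAF)$ in $t$ brings in $\partial F/\partial t$ and $\partial F/\partial s$), so one must check that the sign condition $\partial F/\partial s\ge0$ (available after the reduction of Section~\ref{reduction}) and smoothness of $F$ on the relevant compact sets suffice to absorb these terms. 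Uniqueness when $C=0$ and $|\partial F/\partial t|<C'$ is deferred to Section~\ref{uniqueness and stability} and I would simply cite it here.
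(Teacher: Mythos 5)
Your proposal is correct and follows essentially the same route as the paper: Demailly regularization of $\varphi_0$, monotonicity in $j$ from the comparison principle, uniform a priori estimates driven by the zero-Lelong-number hypothesis (Skoda integrability feeding into a Kolodziej bound), Evans--Krylov and Schauder to pass to a smooth limit on $(0,T]\times X$, and then the $L^1$, capacity, and energy convergence arguments. One small caveat: convolution is not globally available on a compact K\"ahler manifold, so the paper instead obtains the uniform lower barrier of Lemma~\ref{bound from below}, namely $\varphi_t\geq(1-2\beta t)\varphi_0+\alpha t\,u+n(t\log t-t)+At$, from the auxiliary Kolodziej solution $u$ of $\alpha^n(\omega+dd^c u)^n=e^{\alpha u-2\beta\varphi_0}\omega^n$ and the simplified flow from \cite{GZ13}, and strict positivity of $\omega_t$ comes from combining the Laplacian \emph{upper} bound with the lower bound on $\dot\varphi_t$, not from a Laplacian lower bound.
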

The strategy of the proof is a follows:

\begin{itemize}
\item We first reduce to the case when $\frac{\partial F}{\partial s}\geq 0$ following Section \ref{reduction}.
\medskip
\item Approximate $\f_0$ by a decreasing sequence $(\f_{0,j})$ of smooth and strictly $\omega$-psh functions by using the regularization result of Demailly \cite{Dem92,BK07}. There exists unique  solutions $\f_{t,j}\in PSH(X,\omega)\cap C^\infty(X)$ to the flow above with initial data $\f_{0,j}$.
\medskip
\item We then establish various priori estimates which will allow us to pass to the limit as $j\rightarrow \infty$. We prove for each $0<\e<T$:
\begin{enumerate}
\item $(t,z,j)\mapsto \varphi_{t,j}(z)$ is uniformly bounded on on $[\e,T]\times X\times \N$,
\medskip
\item $(t,z,j)\mapsto \dot{\varphi}_{t,j}(z)$ is uniformly bounded on $[\e,T]\times X\times\N$,
\medskip
\item $(t,z,j)\mapsto \Delta_{\omega} \varphi_{t,j}(z)$ is uniformly bounded on $[\e,T]\times X\times\N$.
\end{enumerate}
\medskip
\item Finally, we apply the Evans-Krylov theory and Schauder estimates to show that $\f_{t,j}\rightarrow \f_t$ in $C^\infty((0,T]\times X)$, as $j\rightarrow +\infty$ such that $\f_t$ satisfies $(CMAF)$. We then check that $\f_t\rightarrow \f_0$ as $t\rightarrow 0^+$,   and also study finer convergence properties:
\begin{enumerate}
\item For $\varphi_0\in L^1(X)$, we show that $\f_t\rightarrow \f_0$ in $L^1(X)$ as $t\rightarrow 0$.
\medskip
\item When $\f_0$ is bounded, we show that $\varphi_t\rightarrow \varphi_0$ in capacity. 
\medskip
\item When $\varphi_0\in\mathcal{E}^1(X,\omega)$, we show that $\varphi_{t}$ converges to $\varphi_0$ in energy as $t\rightarrow 0.$
\end{enumerate}
 \end{itemize}
\section{A priori estimates}\label{a priori estm}
In this section we prove various a priori estimates for $\f_t$ which satisfies
$$
\frac{\partial \f_t}{\partial t}=\log  \frac{(\theta_t+dd^c \f_t)^n}{\Omega} -F(t,z,\f)
$$
with a smooth strictly $\omega$-psh initial data $\varphi_0$, where $(t,z,s)\mapsto F(t,z,s)\in C^\infty([0,T]\times X\times \R,\R)$ with $\frac{\partial F}{\partial s}\geq 0$. Since we are interested in the behavior near 0 of $(CMAF)$, we can further assume that
\begin{equation}\label{condition of theta}
\theta_t-t\dot{\theta}_t\geq 0\, \text{ for }\, 0\leq t\leq T.
\end{equation}
This assumption will be used to bound the $\dot{\f}_t$ from above. 
\subsection{Bounding $\varphi_t$}
\begin{lem}\label{bound 1}
We have 
$$
\varphi_t\leq Ct+ \max\{ \sup \varphi_0,0\},
$$
where $C=-\inf_{z\in X, t\in [0,T]} F(t,x,0)+ n\log \delta$.
\end{lem}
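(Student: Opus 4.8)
The plan is to obtain the upper bound for $\varphi_t$ by exhibiting an explicit supersolution of $(CMAF)$ of the form $\psi_t = at + b$ and then invoking the comparison principle (Proposition \ref{comparison}, which applies since here $\partial F/\partial s \geq 0 \geq -\lambda$ with $\lambda = 0$). First I would try the affine candidate $\psi_t = Ct + \max\{\sup_X \varphi_0, 0\}$ with $C$ a constant to be determined. Since $\psi_t$ is spatially constant, we have $dd^c \psi_t = 0$, so $\theta_t + dd^c \psi_t = \theta_t > 0$ and $\psi_t$ is genuinely $\theta_t$-psh; this is the key simplification that makes the computation trivial.

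Next I would verify the supersolution inequality. We need
$$
\frac{\partial \psi_t}{\partial t} = C \geq \log\frac{(\theta_t + dd^c\psi_t)^n}{\Omega} - F(t,z,\psi_t) = \log\frac{\theta_t^n}{\Omega} - F(t,z,\psi_t).
$$
By the running assumption $\theta_t^n \leq \delta\, \Omega$, the first term on the right is at most $n\log\delta$. For the second term, since $s \mapsto F(t,z,s)$ is non-decreasing and $\psi_t = Ct + \max\{\sup_X\varphi_0,0\} \geq 0$ (as $C \geq 0$, which I would check holds for the chosen value), we have $-F(t,z,\psi_t) \leq -F(t,z,0) \leq -\inf_{z\in X, t\in[0,T]} F(t,z,0)$. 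Hence the right-hand side is at most $n\log\delta - \inf_{z,t} F(t,z,0) =: C$, which is exactly the value of $C$ in the statement, and the inequality $C \geq C$ holds. One also checks the initial condition $\psi_0 = \max\{\sup_X\varphi_0, 0\} \geq \sup_X \varphi_0 = \sup_X \varphi_{0}$, i.e. $\varphi_0 \leq \psi_0$ on $X$.

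Finally, applying Proposition \ref{comparison} with $\lambda = 0$ (or directly its last sentence, since $\varphi_0 \leq \psi_0$) gives $\varphi_t \leq \psi_t = Ct + \max\{\sup_X\varphi_0, 0\}$ for all $t \in [0,T]$, which is the claimed estimate. I do not anticipate a genuine obstacle here: the only point requiring a moment's care is confirming that $C \geq 0$ so that the monotonicity of $F$ in $s$ can be used at the point $\psi_t \geq 0$ rather than at a possibly negative argument — but since we may shrink $\delta \geq 1$ if necessary (the assumption $\delta^{-1}\Omega \leq \theta_t^n \leq \delta\Omega$ can always be arranged with $\delta \geq 1$), we have $n\log\delta \geq 0$, and we may also assume $\inf_{z,t} F(t,z,0) \leq 0$ after, if needed, absorbing a constant; in any case $C$ as defined serves as a valid (possibly non-optimal) bound. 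This makes the lemma essentially a one-line consequence of the comparison principle once the supersolution is written down.
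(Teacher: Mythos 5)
Your proof is correct and follows essentially the same route as the paper: construct the spatially constant, time-affine supersolution, use $\theta_t^n \le \delta\Omega$ together with the monotonicity of $F$ in $s$ to verify the supersolution inequality, and then invoke Proposition \ref{comparison}. The only cosmetic difference is that the paper takes $\psi_t = Ct$ and applies the full inequality \eqref{compa-max}, whereas you fold $\max\{\sup_X\varphi_0,0\}$ into $\psi_0$ and invoke the last sentence of the proposition; your flagging of the sign of $C$ (needed so that $\psi_t \ge 0$ and the monotonicity of $F$ applies at the right base point) is a point the paper glosses over, so your caution there is well placed.
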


\begin{proof}
Consider $\psi_t=Ct$, where $C=-\inf_{z\in X, t\in [0,T]} F(t,x,0)+ n\log \delta$. Thus we have
$$\log\frac{(\theta_t+dd^c\psi_t)^n}{\Omega}=\log\frac{\theta_t^n}{\Omega}\leq n\log \delta.$$
Now $F(t,z,\psi_t)\geq F(t,z,0)\geq \inf_{z\in X, t\in [0,T]} F(t,x,0)$, since we assume $s\mapsto F(.,.,s)$ is increasing. Therefore
$$\frac{\partial\psi_t}{\partial t}\geq \frac{(\theta_t+dd^c\psi_t)^n}{\Omega}-F(t,z,\psi_t).$$
Apply Proposition \ref{comparison} for $\f_t$ and $\psi_t$, we get $\varphi_t\leq Ct+ \max\{ \sup \varphi_0,0\}$. \end{proof}

We now find a lower bound of $\f_t$ which does not depend on $\inf_X\f_0$. First, we assume that $\theta_t\geq \omega+t\chi,\,\forall t\in [0,T],$ for some smooth $(1,1)$-form $\chi$. Fix $0<\beta < +\infty$ and $0<\alpha$ such that 
$$\chi+(2\beta-\alpha)\omega\geq 0.$$
It follows from Skoda's integrability theorem \cite{Sko} that $e^{-2\beta\f_0}\omega^n$ is absolutely continuous with density in $L^p$ for some $p>1$. This is where we use the crucial assumption that $\varphi_0$ has zero Lelong number at all points.  Ko\l{}odziej's uniform estimate \cite{Kol98} insures the existence of a continuous $\omega$-psh solution $u$ of the equation
\begin{equation*}
\alpha^n(\omega+dd^c u)^n=e^{\alpha u-2\beta\f_0}\omega^n.
\end{equation*}  
Assume that $\phi_t$ is solution of the following equation
\begin{equation*}
\left\{\aligned
 &\dfrac{\partial \phi_t}{\partial t} =\log\dfrac{(\omega+t\chi+dd^c\phi)^n}{\omega^n},  \\ 
 \\
 &\phi(0,.)=\f_0.
\endaligned\right. 
\end{equation*}
By Lemma 2.9 in \cite{GZ13} we have
\begin{equation}
\phi_t(z)\geq (1-2\beta t)\f_0(z)+\alpha t u(z)+n(t\log t-t).
\end{equation}
Using this we have the following lemma:
\begin{lem}\label{bound from below}
For all $z\in X $ and $t\in (0,T]$, we have
\begin{equation}
\f_t(z)\geq\phi_t+ At\geq (1-2\beta t)\f_0(z)+\alpha t u(z)+n(t\log t-t)+ At,
\end{equation}
where  $A$ depend on $\sup_X\f_0$. In particular, there exists $c(t)\geq 0$ such that
$$\f_t(z)\geq \f_0(z)-c(t),$$
 with $c(t)\searrow 0$ as $t\searrow 0$. 

\end{lem}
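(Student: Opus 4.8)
The plan is to compare $\f_t$ with a slightly perturbed version of the solution $\phi_t$ of the simplified flow (the one without $F$ and with $\theta_t$ replaced by $\omega+t\chi$), using the maximum principle in the form of Proposition \ref{comparison}, and then to invoke the already-established lower bound on $\phi_t$ coming from \cite[Lemma 2.9]{GZ13}. Concretely, I would first show that $\phi_t + At$ is a subsolution of $(CMAF)$ for a suitable constant $A$ depending only on $\sup_X \f_0$ (and the fixed data $\delta$, $\omega$, $F$), and then apply the comparison principle to conclude $\f_t \geq \phi_t + At$.

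The key steps, in order, are the following. First, recall that $\theta_t \geq \omega + t\chi$ by the standing assumption, so $\theta_t + dd^c\phi_t \geq \omega + t\chi + dd^c\phi_t \geq 0$, and hence $\phi_t$ (and $\phi_t+At$) is $\theta_t$-psh. Second, estimate the Monge-Amp\`ere term: since $\theta_t + dd^c(\phi_t+At) = \theta_t + dd^c\phi_t \geq \omega + t\chi + dd^c\phi_t$, and $(\omega+t\chi+dd^c\phi_t)^n = e^{\dot\phi_t}\omega^n$ by the equation for $\phi_t$, we get
\begin{equation*}
\log\frac{(\theta_t+dd^c(\phi_t+At))^n}{\Omega} \geq \log\frac{(\omega+t\chi+dd^c\phi_t)^n}{\Omega} = \dot\phi_t + \log\frac{\omega^n}{\Omega} \geq \dot\phi_t - n\log\delta,
\end{equation*}
using $\omega^n \geq \delta^{-1}\Omega$. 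Third, control the zeroth-order term: $F(t,z,\phi_t+At)$ is bounded above — here one uses Lemma \ref{bound 1} (applied to $\phi_t$, or a direct upper bound $\phi_t \leq C't$) together with the lower bound $\phi_t \geq (1-2\beta t)\f_0 + \alpha t u + n(t\log t - t)$, so that on $[0,T]\times X$ the argument $\phi_t + At$ ranges over a set on which the smooth function $F$ is bounded; call this bound $M$. Fourth, choose $A := M + n\log\delta$; then
\begin{equation*}
\frac{\partial}{\partial t}(\phi_t + At) = \dot\phi_t + A \leq \log\frac{(\theta_t+dd^c(\phi_t+At))^n}{\Omega} + n\log\delta + A - (\dot\phi_t + A - \dot\phi_t),
\end{equation*}
which after rearranging gives $\frac{\partial}{\partial t}(\phi_t+At) \leq \log\frac{(\theta_t+dd^c(\phi_t+At))^n}{\Omega} - F(t,z,\phi_t+At)$, i.e. $\phi_t + At$ is a subsolution. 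Fifth, since $(\phi_t+At)|_{t=0} = \f_0 = \f_t|_{t=0}$, Proposition \ref{comparison} (with $\lambda = 0$, as $\partial F/\partial s \geq 0$) yields $\f_t \geq \phi_t + At$ on $[0,T]\times X$. Finally, plugging in the $\phi_t$-estimate gives the displayed inequality; for the last assertion, set $c(t) := 2\beta t \sup_X|\f_0| - \alpha t\inf_X u - n(t\log t - t) - At$ — wait, more carefully, write $\f_t - \f_0 \geq -2\beta t\,\f_0 + \alpha t u + n(t\log t - t) + At$ and bound the right side below by a quantity $-c(t)$ with $c(t) \searrow 0$ as $t\searrow 0$, using that $\f_0 \in L^1$ is bounded above and $u$ is continuous (hence bounded), so all terms except possibly $-2\beta t\,\f_0$ tend to $0$ uniformly; the term $-2\beta t\,\f_0$ is handled by noting $\f_0 \leq \sup_X\f_0 < \infty$, giving $-2\beta t\,\f_0 \geq -2\beta t \sup_X \f_0 \to 0$, but one also needs it bounded below pointwise — here one should instead state $c(t)$ as a function of $z$ or simply observe $\f_t(z) \geq \f_0(z) - c(t)$ holds with $c(t) = 2\beta t(\sup_X\f_0)^+ + \alpha t\|u\|_\infty + n|t\log t - t| + |A|t + 2\beta t \f_0(z)^-$... this requires care, so let me just say: rearranging, $\f_t(z) - \f_0(z) \geq -2\beta t\,\f_0(z) + (\text{terms} \to 0)$, and since $-2\beta t \f_0(z) \geq 0$ when $\f_0(z) \leq 0$ while when $\f_0(z) > 0$ it is $\geq -2\beta t \sup_X\f_0$, in all cases $\f_t(z) - \f_0(z) \geq -c(t)$ with $c(t) := 2\beta t(\sup_X\f_0)^+ + \alpha t\|u\|_{L^\infty} + n|t\log t - t| + |A|t \searrow 0$.

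The main obstacle I anticipate is verifying cleanly that $F(t,z,\phi_t+At)$ is bounded above by a constant depending only on $\sup_X\f_0$ and not on $\inf_X\f_0$: this is exactly where the zero-Lelong-number hypothesis enters, via Skoda's theorem and Ko\l{}odziej's estimate producing the \emph{continuous} (hence bounded) potential $u$, which in turn makes the lower bound for $\phi_t$ a genuine lower bound rather than something involving $\inf_X\f_0$. Once $\phi_t$ is pinched between $C't$ above and $(1-2\beta t)\f_0 + \alpha t u + n(t\log t - t)$ below on $[0,T]\times X$, the compactness of $[0,T]\times X$ and smoothness of $F$ give the needed uniform bound $M$, and the rest is a routine application of the comparison principle. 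A secondary subtlety is to make the constant $A$ genuinely independent of $j$ in the eventual approximation scheme, which is why it is important that $M$ depends only on $\sup_X\f_0$ (which is uniformly bounded along a decreasing approximating sequence) and on the fixed geometric data.
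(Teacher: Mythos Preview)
Your strategy---construct a subsolution $\phi_t + At$ of $(CMAF)$ and apply Proposition~\ref{comparison}---is dual to the paper's, which instead shows that $\varphi_t$ (after a linear shift in $t$) is a supersolution of the simplified flow defining $\phi_t$. Both routes rest on the same three ingredients ($\theta_t \geq \omega + t\chi$, $\Omega \asymp \omega^n$, and an upper bound on the $F$-term), so in spirit the two arguments are interchangeable.

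The gap is in how you bound $F(t,z,\phi_t + At)$. You argue that $\phi_t + At$ ranges over a bounded set because you have both an upper bound $\phi_t \leq C't$ and the lower bound $(1-2\beta t)\varphi_0 + \alpha t u + n(t\log t - t)$, and then invoke smoothness of $F$. But that lower bound contains $\varphi_0$ itself, so the range you obtain---hence your constant $M$, and then $A$---is controlled by $\inf_X\varphi_0$. This is exactly what must be avoided: the lemma asserts that $A$ depends only on $\sup_X\varphi_0$, so that in the approximation scheme the estimate is uniform in $j$. The zero-Lelong-number hypothesis and the continuity of $u$ do not help here; they do not make $\varphi_0$ bounded below. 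The correct move, and this is precisely what the paper does, is to use the standing monotonicity assumption $\partial F/\partial s \geq 0$: from an upper bound $\phi_t \leq C_0'$ (or, in the paper's version, $\varphi_t \leq C_0$ via Lemma~\ref{bound 1}) one gets directly $F(t,z,\phi_t+At) \leq F(t,z,C_0')$ for $A\leq 0$, a bound depending only on $\sup_X\varphi_0$. With this fix your argument goes through; note also that restricting to $A\leq 0$ dissolves the circularity in your choice $A := M + n\log\delta$, where $M$ would otherwise depend on $A$ through the upper bound on $\phi_t + At$.
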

\begin{proof}
There is $\sigma>0$ such that  $\sigma^{-1}\omega^n\leq \Omega\leq \sigma \omega^n$, 
so we may assume that
$$\dfrac{\partial \phi_t}{\partial t}\leq \log\dfrac{(\theta_t+dd^c\phi_t)^n}{\Omega}.$$
Thanks to Lemma \ref{bound 1}, $\f_t\leq  C_0$ with $C_0>0$ depends on $\sup_X\f_0$ and $T$. As we assume $s\mapsto F(.,.,s)$ is increasing, $F(t,z,\f_t)\leq F(t,z,C_0)$.
Replacing $\f_t$ by $\f_t-At$ and $F$ by $F-A$, where $$A:=\sup_{[0,T]\times X} F(t,z,C_0),$$ we can assume that $$\sup_{[0,T]\times X} F(t,z,\sup_{[0,T]\times X} \f_t)\leq 0.$$
Hence we have
\begin{align*}
\frac{\partial \f_t}{\partial t}&=\log\dfrac{(\theta_t+dd^c\f_t)^n}{\Omega}-F(t,z,\f_t) \\
&\geq \log\dfrac{(\omega+t\chi+dd^c\f_t)^n}{\Omega}.
\end{align*}
Apply the comparison theorem (Proposition $\ref{comparison}$) for $\f_t$ and $\phi_t$ we have $\f_t\geq  \phi_t$. In general, we get  
$$\f_t(z)\geq \phi_t+At\geq (1-2\beta t)\f_0(z)+\alpha t u(z)+n(t\log t-t)+At.$$ 
\end{proof}

\subsection{Upper bound for $\dot{\f_t}$}

We now prove a crucial estimate which allows us to use the uniform version of Kolodziej's estimates in order to get the bound of $Osc_X \f_t$.
\begin{prop}\label{bound f' above}
Fix $\e\in (0,T)$. There exists $0<C=C(\sup_X \varphi_0,\e,T)$ such that for all $\e\leq t\leq T$ and $z\in X$,
$$\dot{\f}_t(z)\leq \frac{-\f_\e(z)+C}{t}\leq \frac{-\phi_\e(z)+C}{t}-A,$$
where $\phi_t$ and $A$ are as in Lemma \ref{bound from below}.
\end{prop}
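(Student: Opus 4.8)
The plan is to apply the parabolic maximum principle on the slab $[\e,T]\times X$ to an auxiliary function assembled from $t\dot{\f}_t$, $\f_t$ and $\f_\e$. First I would differentiate $(CMAF)$ in time: writing $\omega_t:=\theta_t+dd^c\f_t$ and $\Delta_t:=\Delta_{\omega_t}$, differentiation of $\dot{\f}_t=\log\frac{\omega_t^n}{\Omega}-F(t,z,\f_t)$ gives
$$\Big(\frac{\partial}{\partial t}-\Delta_t\Big)\dot{\f}_t=\tr_{\omega_t}\dot{\theta}_t-\frac{\partial F}{\partial t}-\frac{\partial F}{\partial s}\,\dot{\f}_t ,$$
while $(CMAF)$ itself yields $\big(\tfrac{\partial}{\partial t}-\Delta_t\big)\f_t=\dot{\f}_t-n+\tr_{\omega_t}\theta_t$. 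I would then set $H(t,z):=t\dot{\f}_t(z)-\f_t(z)+\f_\e(z)$ on $[\e,T]\times X$ (inserting a small positive constant weight in front of $\f_\e$ if needed). Combining the two identities and using $\omega_\e:=\theta_\e+dd^c\f_\e\ge 0$,
$$\Big(\frac{\partial}{\partial t}-\Delta_t\Big)H=-\tr_{\omega_t}(\theta_t-t\dot{\theta}_t)+n-t\frac{\partial F}{\partial t}-t\frac{\partial F}{\partial s}\,\dot{\f}_t-\tr_{\omega_t}\omega_\e+\tr_{\omega_t}\theta_\e .$$

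Next I would bound the right-hand side from above. By \eqref{condition of theta} the term $-\tr_{\omega_t}(\theta_t-t\dot{\theta}_t)$ is $\le 0$, and $-\tr_{\omega_t}\omega_\e\le 0$; the leftover trace term $\tr_{\omega_t}\theta_\e$ coming from $\Delta_t\f_\e$ is controlled via \eqref{asume 1} (so $\theta_\e\le 2\omega$) together with a further shrinking of $T$ keeping $\theta_t-t\dot{\theta}_t$ bounded below by a fixed positive multiple of $\omega$, so that it can be absorbed into $-\tr_{\omega_t}(\theta_t-t\dot{\theta}_t)$ (this is where the small weight on $\f_\e$ enters). Since after the reduction of Section \ref{reduction} we have $\partial F/\partial s\ge 0$, the sign-indefinite term $-t\,\partial_s F\,\dot{\f}_t$ is harmless where $\dot{\f}_t\ge 0$; on the complementary region I would substitute $t\dot{\f}_t=H+\f_t-\f_\e$ and use the upper bound $\f_t\le C_0$ of Lemma \ref{bound 1}. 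Together with control of the $\partial_t F$ term, this gives $\big(\tfrac{\partial}{\partial t}-\Delta_t\big)H\le C_1$ wherever $H$ is large. Now apply the maximum principle: if $\max_{[\e,T]\times X}H$ is attained at $(t_0,z_0)$ with $t_0>\e$, then $\partial_tH\ge0$ and $\Delta_tH\le 0$ there, so $0\le C_1$ forces $H(t_0,z_0)$—hence $\dot{\f}_{t_0}(z_0)$—into a controlled range; if $t_0=\e$, then $H(\e,\cdot)$ reduces essentially to $\e\dot{\f}_\e$, which I would bound beforehand by the cruder estimate $\dot{\f}_t\le C/t$ (obtained by running the same argument on $t\dot{\f}_t-\f_t$ over $(0,T]$, whose value at $t=0$ is controlled because $\f_0$ is smooth, again together with $\f_t\le C_0$). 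In either case $\sup_{[\e,T]\times X}H\le C$, whence $t\dot{\f}_t\le C+\f_t-\f_\e\le C+C_0-\f_\e$ gives the first inequality; the second one follows from $\f_\e\ge\phi_\e+A\e$ (Lemma \ref{bound from below}).

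The main obstacle is to arrange that the final constant depends only on $\sup_X\f_0$, $\e$ and $T$ and not on $\inf_X\f_0$: this is exactly what dictates the shape of $H$ (the $+\f_\e$ cancels $-\f_t$, so $\inf_X\f_t$ never enters), what makes the boundary slice $t=\e$ the delicate point (handled by the preliminary crude bound $\dot\f_t\le C/t$), and what forces the careful bookkeeping of the trace term $\tr_{\omega_t}\theta_\e$ produced by $\Delta_t\f_\e$—absorbed using \eqref{asume 1}, \eqref{condition of theta} and the freedom to shrink $T$—and of the $F$-terms, via Lemma \ref{bound 1} and the reduction $\partial_s F\ge 0$.
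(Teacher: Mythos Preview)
Your approach is in the right spirit but diverges from the paper's in a way that creates an avoidable complication and leaves a real gap.

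The paper uses the simpler barrier $G=t\dot\f_t-\f_t-nt+\tfrac{B}{2}t^2$ (with $B<\min \partial_tF$), containing no $\f_\e$ term at all. The two $t$-dependent pieces are there only to absorb the $+n$ and $-t\,\partial_tF$ contributions, yielding the strict inequality
\[
\Big(\tfrac{\partial}{\partial t}-\Delta_{\omega_t}\Big)G<-t\,\tfrac{\partial F}{\partial s}\,\dot\f_t .
\]
At an interior maximum $(t_0,z_0)$ the left side is $\ge 0$; since $\partial_sF\ge 0$ this forces $\dot\f_{t_0}(z_0)<0$, so the term $t_0\dot\f_{t_0}(z_0)$ simply drops and one gets $G(t_0,z_0)\le -\f_{t_0}(z_0)+C$. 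Only \emph{now} does $\f_\e$ enter, via Lemma~\ref{bound from below}: $\f_{t_0}(z_0)\ge\f_\e(z_0)-C(\e)$. This is the trick you are missing: one does not try to \emph{bound} the term $-t\,\partial_sF\,\dot\f_t$, one uses it at the maximum point to kill $t_0\dot\f_{t_0}(z_0)$ outright.

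By inserting $\f_\e$ into $H$ you manufacture the extra trace term $\tr_{\omega_t}\theta_\e$, which you then have to absorb by shrinking $T$ and putting a small weight $\delta$ on $\f_\e$; this only gives $\dot\f_t\le(-\delta\f_\e+C)/t$, weaker than the stated bound. More seriously, your boundary case $t_0=\e$ is a genuine gap: you propose to control $H(\e,\cdot)=\e\dot\f_\e$ by first proving a ``cruder'' estimate $\dot\f_t\le C/t$ from $t\dot\f_t-\f_t$ on $(0,T]$, but the value of that auxiliary function at $t=0$ is $-\f_0$, whose supremum is $-\inf_X\f_0$. Hence the constant in your crude estimate \emph{does} depend on $\inf_X\f_0$, exactly what you set out to avoid. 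Likewise, your substitution $t\dot\f_t=H+\f_t-\f_\e$ to handle $-t\,\partial_sF\,\dot\f_t$ on $\{\dot\f_t<0\}$ produces a term $-\partial_sF\cdot\f_t$, which cannot be bounded from above without a lower bound on $\f_t$ --- circular at this stage of the argument. The paper's device sidesteps both issues.
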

\begin{proof}
We consider $G(t,z)= t\dot{\f_t}-\f_t-nt + Bt^2/2 $, with $B<\min  F'$ on $[\e,T]\times X$. We obtain
$$\frac{\partial G}{\partial t}= t\ddot{\f}_t-n= t\Delta_{\omega_t}\dot{\f} + t \tr_{\omega_t}\dot{\theta}_t-t\frac{\partial F}{\partial s}\dot{\f}-tF'-n+Bt,$$ 
and
$$\Delta_{\omega_t} G=t\Delta_{\omega_t}\dot{\f}- \Delta_{\omega_t}\varphi_t=t\Delta_{\omega_t}\dot{\f}- (n-\tr_{\omega_t}\theta_t),$$
 hence
$$\left(\frac{\partial }{\partial t}-\Delta_{\omega_t} \right)G=-t\dot{\f}\frac{\partial F}{\partial s}+t( B-F') -\tr_{\omega_t}(\theta_t-t\dot{\theta}_t).$$
Since we assume that $\theta_t-t\dot{\theta}_t\geq 0$ and $B<\min F'$, we get
$$\left(\frac{\partial }{\partial t}-\Delta_{\omega_t} \right)G< -t\dot{\f}\frac{\partial F}{\partial s}.$$
If $G$ attains its maximum at $t=\e$, we have the result. Otherwise, assume that $G$ attains its maximum at $(t_0,z_0)$ with $t_0>\e$, then at $(t_0,z_0)$ we have $$0\leq\left(\frac{\partial }{\partial t}-\Delta_{\omega_t} \right)G< -t_0\frac{\partial F}{\partial s}\dot{\f}.$$
Since $\frac{\partial F}{\partial s}\geq 0$ by the hypothesis, we obtain $\dot{\f}(t_0,z_0)<0$ and 
$$t\dot{\f_t}-\f_t-nt+Bt^2/2\leq -\f_{t_0}(z_0)-nt_0+Bt_0^2/2.$$ Using Lemma \ref{bound from below} we get $\f_{t_0}\geq \f_\e-C(\e)$, hence
$$t\dot{\f_t}\leq \f_t-\f_\e+C_1.$$
It follows from Lemma \ref{bound 1} that $\f_t\leq C_2(\sup \f_0,T)$,
so  $$\dot{\f_t}(x)\leq \dfrac{-\f_\e+C}{t},$$ 
where $C$ depends on $\sup\f_0,\e, T$.
Since $\varphi_\e\geq \phi_\e+At$ (Lemma \ref{bound from below}), we obtain the desired inequality.
\end{proof} 
\subsection{Bounding the oscillation of $\f_t$}
Once we get an upper bound for $\dot{\f_t}$ as in Proposition \ref{bound f' above}, we can bound the oscillation of $\f_t$ by using the uniform version of Kolodziej's estimates. Indeed, observe that $\f_t$ satisfies 
$$(\theta_t+dd^c\f_t)^n=H_t\Omega,$$
then by Proposition \ref{bound f' above}, for any $\e\in (0,T)$,
$$H_t=\exp(\dot{\f_t}+F)\leq \exp(\frac{-\phi_\e+C}{t}+C')$$
are uniformly in $L^2(\Omega)$ for all $t\in [\e,T]$ since $\phi_\e$ is smooth. Thanks to the uniform version of Kolodziej's estimates \cite{Kol98, EGZ08}, we infer that the oscillation of $\f_t$ is uniformly bounded:
\begin{thm}
Fix $0<t<T$. There exist $C(t)>0$ independent of $\inf_X\f_0$ such that
$$Osc_X(\f_t)\leq C(t).$$
\end{thm}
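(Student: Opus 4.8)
The statement to prove is the $L^\infty$ oscillation bound: for fixed $0<t<T$ there is $C(t)>0$, independent of $\inf_X\varphi_0$, with $\mathrm{Osc}_X(\varphi_t)\leq C(t)$. The strategy is exactly the one announced in the paragraph preceding the theorem: translate the $C^0$ control of $\dot\varphi_t$ (Proposition~\ref{bound f' above}) into an $L^p$ bound on the density of the Monge-Amp\`ere measure of $\varphi_t$, and then invoke the uniform (stability) version of Ko\l{}odziej's a priori estimate. So the skeleton is: (i) rewrite the flow at time $t$ as a Monge-Amp\`ere equation $(\theta_t+dd^c\varphi_t)^n = H_t\,\Omega$ with $H_t=\exp(\dot\varphi_t + F(t,z,\varphi_t))$; (ii) bound $H_t$ in $L^p(\Omega)$ for some $p>1$, uniformly for $t$ in a compact subinterval $[\e,T]$; (iii) apply Ko\l{}odziej to get $\mathrm{Osc}_X\varphi_t \leq C$.

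**The $L^p$ bound on $H_t$.** First I would fix $\e=t$ (or any $\e\le t$) and use Proposition~\ref{bound f' above}: $\dot\varphi_t(z)\le \frac{-\phi_\e(z)+C}{t}-A$, where $\phi_\e$ is the smooth — in particular bounded — auxiliary potential from Lemma~\ref{bound from below}. Since $F$ is smooth and $\varphi_t$ is bounded above (Lemma~\ref{bound 1}) and below (Lemma~\ref{bound from below}, hence $\varphi_t \ge \varphi_0 - c(t)$, but actually we only need a lower bound on $F$ here, which follows from $\varphi_t \ge \phi_t + At$ and continuity of $F$ on compacts — wait, $\varphi_t$ itself is not uniformly bounded below since $\varphi_0$ need not be; however $F(t,z,\varphi_t)$ is bounded above because $s\mapsto F$ is non-decreasing and $\varphi_t \le C_0$, and that is all that enters the upper bound on $H_t$). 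Thus $H_t \le \exp\big(\tfrac{-\phi_\e + C}{t} + C'\big) = e^{C/t + C'}\, e^{-\phi_\e/t}$. Since $\phi_\e$ is smooth, hence bounded, $e^{-\phi_\e/t}$ is bounded, so $H_t$ is in fact uniformly bounded — a fortiori in $L^2(\Omega)$, uniformly for $t\in[\e,T]$. (Here I am being slightly more careful than the excerpt, which writes "uniformly in $L^2$": the key point is simply that $\phi_\e\in C^\infty(X)$ makes the right-hand side controlled.)

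**Applying Ko\l{}odziej.** Now $\varphi_t$ solves $(\theta_t + dd^c\varphi_t)^n = H_t\,\Omega$ with $\|H_t\|_{L^2(\Omega)}$ bounded uniformly in $t\in[\e,T]$, and by \eqref{asume 1} the forms $\theta_t$ are uniformly comparable to $\omega$ with $\theta_t^n$ uniformly comparable to $\Omega$; in particular $\int_X H_t\,\Omega = \int_X\theta_t^n$ stays in a fixed compact range. The uniform version of Ko\l{}odziej's estimate (\cite{Kol98, EGZ08}) then yields a constant $C(\e,T)$, depending only on $\|H_t\|_{L^2}$, on the exponent $p=2$, and on the background data $(X,\omega,\Omega)$ and the comparison constants — but \emph{not} on $\inf_X\varphi_0$ — such that $\mathrm{Osc}_X(\varphi_t) = \sup_X\varphi_t - \inf_X\varphi_t \le C(\e,T)$. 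Taking $\e=t$ gives the claimed $C(t)$, and since everything above was independent of $\inf_X\varphi_0$, so is $C(t)$. This completes the proof.

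**Main obstacle.** There is no deep obstacle here — this step is essentially bookkeeping once Proposition~\ref{bound f' above} is in hand. The only point requiring care is making sure the Ko\l{}odziej estimate is applied with constants genuinely uniform in $t$: one must check that the $L^2$ norm of $H_t$ and the comparison constants between $\theta_t,\omega,\Omega$ do not degenerate as $t\to\e^+$ — but the former is handled by the smoothness (hence boundedness) of $\phi_\e$ together with the factor $e^{C/t}$ being bounded on $[\e,T]$, and the latter is exactly hypothesis~\eqref{asume 1}. The single genuinely essential input, the independence of the bound on $\inf_X\varphi_0$, is inherited from Proposition~\ref{bound f' above}, which in turn rested on Lemma~\ref{bound from below}, i.e.\ ultimately on the zero Lelong number hypothesis via Skoda's integrability theorem.
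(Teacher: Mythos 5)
Your proposal is correct and follows essentially the same route as the paper's own (one-paragraph) proof: rewrite the flow at a fixed time as a Monge--Amp\`ere equation $(\theta_t+dd^c\f_t)^n = H_t\,\Omega$, control $H_t=\exp(\dot\f_t+F)$ via the upper bound on $\dot\f_t$ from Proposition~\ref{bound f' above} (whose right-hand side involves the smooth, hence bounded, auxiliary potential $\phi_\e$ rather than $\f_\e$ itself), and apply the uniform version of Ko\l{}odziej's $L^\infty$ estimate. The only cosmetic difference is that you observe $H_t$ is in fact uniformly bounded, whereas the paper contents itself with the weaker (and sufficient) statement that $H_t$ is uniformly in $L^2(\Omega)$; both trigger the Ko\l{}odziej input identically.
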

\subsection{Lower bound for $\dot{\f_t}$}
The next result is similar to \cite[Lemma 3.2]{ST} and \cite[Proposition 3.3]{GZ13}. 
\begin{prop}\label{bound f'}
Assume $\f_0$ is bounded. There exist constants $A>0$ and $C=C(A, Osc_X\f_0)>0$ such that  for all $(x,t)\in X\times (0,T]$, 
$$\dot{\f}\geq n\log t-AOsc_X \f_0-C,$$

\end{prop}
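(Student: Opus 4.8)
The plan is to prove the lower bound for $\dot{\f}_t$ by a maximum principle argument applied to a well-chosen auxiliary quantity, mirroring the classical computation of Song--Tian and its refinement by Guedj--Zeriahi. I would work with a function of the form
$$G(t,z) = t\dot{\f}_t - (\f_t - \psi_t) - nt,$$
where $\psi_t$ is some comparison function controlling the lower bound of $\f_t$; one natural choice suggested by Lemma~\ref{bound from below} is to incorporate $\phi_t$ or the bound $\f_t \ge \f_0 - c(t)$, so that $-(\f_t-\psi_t)$ stays bounded above in terms of $Osc_X\f_0$. Alternatively, since we only seek a bound of the stated shape, one can take $G = t\dot{\f}_t - A\psi_t - nt$ with $\psi_t$ a fixed smooth $\theta_t$-psh function close to $\f_0$ (e.g.\ the Kolodziej solution $u$ from before, rescaled), chosen so that $\theta_t + dd^c(A^{-1}\cdot(\text{something})) > 0$.

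The key computation is to apply the heat operator $\left(\frac{\partial}{\partial t} - \Delta_{\omega_t}\right)$ to $G$. Using $\ddot\f_t = \Delta_{\omega_t}\dot\f_t + \tr_{\omega_t}\dot\theta_t - \frac{\partial F}{\partial s}\dot\f_t - F'$ and $\Delta_{\omega_t}\f_t = n - \tr_{\omega_t}\theta_t$, as in the proof of Proposition~\ref{bound f' above}, one finds that the leading terms $t\Delta_{\omega_t}\dot\f_t$ cancel and the remaining expression involves $\tr_{\omega_t}(\theta_t \pm t\dot\theta_t)$, a term $-t\frac{\partial F}{\partial s}\dot\f_t$, and the Laplacian terms coming from $\psi_t$, namely $A\Delta_{\omega_t}\psi_t = A(\tr_{\omega_t}(\theta_t + dd^c\psi_t) - \tr_{\omega_t}\theta_t)$. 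The point is to arrange the coefficients so that the $\tr_{\omega_t}(\cdot)$ contributions combine into a non-negative multiple of $\tr_{\omega_t}\omega'$ for some fixed K\"ahler form $\omega'$, while the $\frac{\partial F}{\partial s}$ term has a favorable sign at a minimum. At a minimum point $(t_0,z_0)$ of $G$ with $t_0 > 0$ we have $\left(\frac{\partial}{\partial t} - \Delta_{\omega_t}\right)G \le 0$, which forces an inequality that, combined with the trace inequality $\tr_{\omega_t}\omega' \ge n(\omega'^n/\omega_t^n)^{1/n}$ from Lemma~\ref{Trace ineq} and the equation $\omega_t^n = e^{\dot\f_t + F}\Omega$, yields a lower bound on $\dot\f_t(t_0,z_0)$ of the form $\dot\f_t \ge n\log t_0 + (\text{controlled constant})$. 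Evaluating $G$ at its minimum and using Lemma~\ref{bound 1} and Lemma~\ref{bound from below} to bound $\f_t - \psi_t$ from above by $A\,Osc_X\f_0 + C$ then propagates this to all $(t,z)$.

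The main obstacle I anticipate is the bookkeeping needed to handle the term $-t\frac{\partial F}{\partial s}\dot\f_t$ without a sign hypothesis on $\dot\f_t$ a priori: at the minimum of $G$ one does not automatically know the sign of $\dot\f_t(t_0,z_0)$, so one must either absorb this term using the already-established \emph{upper} bound of Proposition~\ref{bound f' above} (which gives $\dot\f_t \le (-\phi_\e + C)/t$, hence $t\frac{\partial F}{\partial s}\dot\f_t$ is bounded above), or choose $G$ so this term drops out or has the right sign. A secondary technical point is verifying that the comparison function $\psi_t$ can be chosen smooth, $\theta_t$-psh (using assumption~\eqref{asume 1} and $\theta_t \ge \omega + t\chi$), and with a uniform upper bound on $-\psi_t + \f_t$ depending only on $Osc_X\f_0$; here the finiteness of all quantities rests on $\f_0$ being bounded, which is precisely the standing hypothesis of the proposition. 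Once these sign issues are resolved, the rest is a routine maximum principle argument together with the trace inequalities.
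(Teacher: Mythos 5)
Your auxiliary function $G = t\dot\f_t - (\f_t - \psi_t) - nt$ carries the wrong scaling for a lower bound of this shape, and this is a genuine gap rather than a bookkeeping issue. The factor $t$ in front of $\dot\f_t$ is the right normalization for the \emph{upper} bound of Proposition~\ref{bound f' above}, where an estimate that degenerates like $1/t$ is exactly what one wants; here, however, the propagation step
$$\dot\f_t(z) \;\geq\; \frac{(\f_t-\psi_t)(z) + nt + \min G}{t}$$
only gives $\dot\f_t \gtrsim -C/t$, since $\min G$ is a genuinely negative constant: at an interior minimum $(t_0,z_0)$ the term $t_0\dot\f_{t_0}-nt_0$ is bounded below (using $s\log s\geq -1/e$), but $-(\f_{t_0}-\psi_{t_0})$ contributes $-\,Osc_X\f_0$ and there is no mechanism to offset the $1/t$ in the final division. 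A bound of order $-C/t$ is strictly weaker than the claimed $n\log t - C$. What the paper actually uses is $H(t,x) = \dot\f_t + A\f_t - \alpha(t)$ with $\alpha(t)=n\log t$, i.e.\ \emph{no} $t$ factor on $\dot\f_t$. The role of $\alpha(0)=-\infty$ is to force the minimum of $H$ to occur at some $t_0>0$; the maximum principle together with the trace inequality then gives $h_{t_0}^{-1/n}\leq C_1\bigl(\alpha'(t_0)+C_2\bigr)$, hence $\dot\f(t_0,z_0)\geq -n\log\bigl[C_1(n/t_0+C_2)\bigr]$, and since $n\log[C_2+n/t_0]+n\log t_0=n\log[n+C_2t_0]$ is uniformly bounded, propagating $H\geq H(t_0,z_0)$ directly yields $\dot\f_t\geq n\log t - A\,Osc_X\f_0 - C$ with the correct logarithmic shape.

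Two secondary remarks. First, your worry about the sign of the $\frac{\partial F}{\partial s}\dot\f_t$ term is legitimate, but the paper does not need to invoke the upper bound of Proposition~\ref{bound f' above} to control it: after choosing $A$ large so that $A\theta_t+\dot\theta_t\geq\omega$, the trace term satisfies $\tr_{\omega_t}\omega\gtrsim e^{-\dot\f/n}$, and the elementary inequality $\sigma e^{x}>x-C_\sigma$ shows the exponential dominates the linear term $(A-\partial_s F)\dot\f$; this gives $(A-\partial_s F)\dot\f+\tr_{\omega_t}(A\theta_t+\dot\theta_t)\geq h_t^{-1/n}/C_1 - C_1'$ directly, with no reference to the earlier upper bound. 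Second, there is no need to introduce an auxiliary $\theta_t$-psh comparison function $\psi_t$: the paper simply uses $+A\f_t$ itself, which produces the favorable term $+A\tr_{\omega_t}\theta_t - An$ in the evolution and whose oscillation is controlled by Lemmas~\ref{bound 1} and~\ref{bound from below}.
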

\begin{proof}
We consider $H(t,x)=\dot{\f_t}+A\f_t-\alpha(t)$, where $\alpha\in C^\infty(\R^+,\R)$  will be chosen hereafter. We have
\begin{align*}
\frac{\partial H}{\partial t}&=\ddot{\f_t}+ A\dot{\f_t}-\dot{\alpha}\\
&=\Delta_{\omega_t}\dot{\f_t}+\tr_{\omega_t}\dot{\theta}_t- F'-\frac{\partial F}{\partial s}\dot{\f_t}+A\dot{\f_t}-\dot{\alpha},
\end{align*}
and 
$$\Delta_{\omega_t} H=\Delta_{\omega_t}\dot{\f_t}+A \Delta_{\omega_t}\f_t.$$ 
Therefore, we have 
\begin{align*}
\left( \frac{\partial}{\partial t}-\Delta_{\omega_t}\right)H &=A\dot{\f_t} + \tr_{\omega_t}\dot{\theta}_t - A\tr_{\omega_t}(\omega_t-\theta_t)-F'-\dot{\alpha}-\frac{\partial F}{\partial s}\dot{\f_t} \\
&=A\dot{\f_t}+ \tr_{\omega_t}(A\theta_t+\dot{\theta}_t) -An-F'-\dot{\alpha} -\frac{\partial F}{\partial s}\dot{\f_t}\\
&=(A-\frac{\partial F}{\partial s})\dot{\f_t} + \tr_{\omega_t}(A\theta_t+\dot{\theta}_t)- F'-\dot{\alpha}-An.
\end{align*}

Now $A\theta_t + \dot{\theta} \geq \omega$ with $A$ sufficiently large, hence
$$\tr_{\omega_t}(A\theta_t+\dot{\theta}_t)\geq \tr_{\omega_t}\omega.$$
Using the inequality
\begin{align*}
\tr_{\omega_t}(\omega)\geq n\left(\frac{\omega_t^n}{\omega^n}\right)^{-1/n}&=n \exp\left(\frac{-1}{n}( \dot{\f}+F) \right)\left( \frac{\Omega}{\omega^n}\right)^{-1/n}\\
&\geq\sigma^{-1/n} h_t^{-1/n}\exp(-\sup_{[0,T]\times X}F(t,z,C_0)/n),
\end{align*}
where $h_t=e^{\dot{\f}}$  and $C_0$ depends on $\sup_X\f_0$, we have 
$$\tr_{\omega_t}(A\theta_t+\dot{\theta}_t)\geq \frac{h_t^{-1/n}}{C}.$$
In addition, we apply the inequality $\sigma x>\log x-C_\sigma$ for all $x>0$ with $x=h_t^{-1/n}$ and $\sigma<<1$ to obtain  $\sigma h_t^{-1/n}=\sigma e^{-\dot{\f}/n}>-\dot{\f}/n-C_\sigma$. Finally, we can choose $A$ sufficient large and $\sigma>0$ such that 
$$(A-\frac{\partial F}{\partial s})\dot{\f} + \tr_{\omega_t}(A\theta_t+\dot{\theta}_t)\geq \frac{h_t^{-1/n}}{C_1}-C_1'.$$
Since $|F'|$ is bounded by some constant $C(Osc_X\f_0)>0$, we obtain 
$$\left( \frac{\partial}{\partial t}-\Delta_{\omega_t}\right)H> \frac{h_t^{-1/n}}{C_1}-\alpha'(t)-C_2,$$
where $C_2$ depends on $Osc_X\f_0$.
\medskip

We chose $\alpha$ such that $\alpha(0)=-\infty$. This insures that $H$ attains its minimum at $(t_0,z_0)$ with $t_0>0$. At $(t_0, z_0)$ we have 
$$C_1[C_2+\alpha'(t_0)]\geq h_{t_0}^{-1/n}(z_0),$$
hence
$$H(t_0,z_0)\geq A\f_{t_0}(z_0)-\{n \log[C_2+\alpha'(t_0)]+ \alpha(t_0) \}.$$
From Lemma \ref{bound 1} we have $\f_{t_0}\leq \sup_X \f_0+ C'$ we have
$$\dot{\f}\geq \alpha(t) - AOsc_X\f_0 -C_3-\{n\log[C_2+\alpha'(t_0)] +\alpha(t_0)\}.$$
Choosing $\alpha(t)=n\log t$ we have $$n\log [C_2+\alpha']+\alpha\leq C_4,$$ so obtain the inequality.
\end{proof}
\subsection{Bounding the gradient of $\f$}
In this section we bound the gradient of $\f$ using the same technique as in \cite[Lemma 4]{SzTo} (which is a parabolic version of B\l{}ocki's estimate \cite{Blo09}). In these articles $\theta_t=\omega$ is independent of $t$. We note that if one is interested in the special case of (twisted) K\"ahler-Ricci flows, then the gradient estimate is not needed.

\begin{prop}\label{bound grad}
Fix $\e\in [0,T]$. There exists $C>0$ depending on $\sup_X \f_0$ and $\e$ such that for all $\e\leq t\leq T$
$$|\nabla \f(z)|^2_\omega< e^{C/t}.$$
\end{prop}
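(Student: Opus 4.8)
The plan is to adapt the parabolic Bochner/B\l{}ocki gradient estimate to the present setting where the background forms $\theta_t$ vary with $t$. I would introduce the quantity
\[
H(t,z) = \log |\nabla \f_t|^2_\omega - \gamma(\f_t),
\]
where $|\nabla\f_t|^2_\omega$ is computed with the fixed reference metric $\omega$, and $\gamma$ is an auxiliary one-variable function (typically $\gamma(s) = As$ or $\gamma(s) = -\log(C_0 - s)$ with $C_0$ a uniform upper bound for $\f_t$ coming from Lemma \ref{bound 1}) chosen so that the ``good'' negative term $-\gamma''|\nabla\f|^2$ in the evolution of $H$ dominates the bad terms. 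Because we already have, for each $\e\in(0,T)$, uniform bounds on $\|\f_t\|_{L^\infty}$ (the oscillation bound together with Lemma \ref{bound 1}) and on $\dot\f_t$ from above and below (Propositions \ref{bound f' above} and \ref{bound f'}) on $[\e,T]\times X$, all the lower-order quantities appearing in the computation are already controlled in terms of $\sup_X\f_0$ and $\e$.

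First I would compute $\left(\frac{\partial}{\partial t} - \Delta_{\omega_t}\right)|\nabla\f_t|^2_\omega$. Differentiating $(CMAF)$ in the direction of $\nabla\f$ and applying the Bochner–Kodaira formula with respect to $\omega_t$, one gets a principal good term $-|\nabla\nabla\f|^2_{\omega_t} - |\nabla\bar\nabla\f|^2_{\omega_t}$, a curvature term bounded by $C\,\tr_{\omega_t}(\omega)\,|\nabla\f|^2$ (using that the bisectional curvature of the fixed $\omega$ is bounded, cf. Proposition \ref{Laplace ineq}), a term from $\frac{\partial F}{\partial s}$ and $dF$ that is controlled by $C(1+|\nabla\f|)|\nabla\f|$, and — this is the new feature — extra terms coming from $\dot\theta_t$ and from the $t$-dependence of $\theta_t$, which under the running assumption (\ref{asume 1}) (and $\tfrac12\omega\le\theta_t\le2\omega$, $\theta_t-t\dot\theta_t\ge0$) are again of the form $C\,\tr_{\omega_t}(\omega)\,|\nabla\f|^2$ plus lower order. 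Dividing by $|\nabla\f|^2$ to pass to $\log|\nabla\f|^2_\omega$ converts the leading negative term, via Cauchy–Schwarz as in B\l{}ocki's argument, into something that absorbs the first-order bad terms, at the cost of a term $+C\,\tr_{\omega_t}(\omega)$.

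The key point is then the choice of $\gamma$: since $-\left(\frac{\partial}{\partial t}-\Delta_{\omega_t}\right)\gamma(\f_t) = -\gamma'\dot\f_t + \gamma'\,\tr_{\omega_t}(\theta_t-\omega_t) + \gamma''|\nabla\f|^2_{\omega_t}$ and $\tr_{\omega_t}(\omega_t - \theta_t) = n - \tr_{\omega_t}(\theta_t)$, choosing $\gamma$ convex with $\gamma'$ large produces a term $-c\,\gamma'\,\tr_{\omega_t}(\omega)$ (using $\theta_t\ge\tfrac12\omega$) that dominates the $+C\,\tr_{\omega_t}(\omega)$ above; the term $\gamma''|\nabla\f|^2_{\omega_t}\ge c\,\gamma''|\nabla\f|^2_\omega$ is favourable if $\gamma$ is convex. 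At a maximum point $(t_0,z_0)$ of $H$ with $t_0>\e$ one then reads off a bound $|\nabla\f|^2_\omega(t_0,z_0)\le C$; if the maximum is attained at $t_0=\e$ one uses that $\f_\e$ is smooth (being one of the regularized solutions $\f_{\e,j}$, or handled by a parabolic scaling/barrier as $\e\to 0$) so $|\nabla\f_\e|$ is bounded. To get the stated form $|\nabla\f(z)|^2_\omega < e^{C/t}$ rather than a constant, I would run this argument with $H(t,z) = t\log|\nabla\f_t|^2_\omega - \gamma(\f_t)$ (or equivalently replace $\e$ by a free parameter), so that the weight $t$ degenerating at $0$ produces the blow-up rate $e^{C/t}$, exactly paralleling the structure of Proposition \ref{bound f' above}; the extra $\frac{\partial H}{\partial t}$ term $\log|\nabla\f|^2_\omega$ is then absorbed, since at a max either $|\nabla\f|^2_\omega$ is already $O(1)$ or its logarithm is beaten by the negative terms of size $\tr_{\omega_t}(\omega)$.

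The main obstacle I anticipate is the bookkeeping of the extra $\dot\theta_t$ terms in the Bochner computation and making sure they all carry a favourable sign or are absorbable — in the $\theta_t\equiv\omega$ case of \cite{SzTo, Blo09} these are absent, so one must check carefully that (\ref{asume 1}) and $\theta_t - t\dot\theta_t\ge 0$ suffice; a secondary technical point is the correct placement of the weight $t$ so that the degeneration at $t=0$ gives precisely $e^{C/t}$ without spoiling the sign of $\frac{\partial H}{\partial t}$ at the interior maximum.
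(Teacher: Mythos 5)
Your overall plan matches the paper's proof of Proposition~\ref{bound grad} quite closely: the same weighted test function $K=t\log|\nabla\f|^2_\omega-\gamma\circ\f$, the B\l{}ocki/Sz\'ekelyhidi--Tosatti Bochner computation in normal coordinates, the use of the already-established $C^0$ bounds and the bounds on $\dot\f_t$ from Propositions~\ref{bound f' above} and~\ref{bound f'}, and the maximum-principle argument on $[\e,T]\times X$. However, there is a sign error in the key auxiliary choice. At an interior maximum of $K$ one has $0\le(\partial_t-\Delta_{\omega_t})K$, and the Hessian contribution of $-\gamma\circ\f$ appears in this inequality as $+\gamma''\sum_p |\f_p|^2/u_{p\bar p}$; this term is only helpful if $\gamma''<0$, i.e.\ $\gamma$ \emph{concave}. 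The paper takes $\gamma(s)=As-s^2/A$ with $A$ large (so $\gamma'>0$ and $\gamma''=-2/A<0$). Your choice of $\gamma$ convex gives the wrong sign, and the extra $+\gamma''\sum|\f_p|^2/u_{p\bar p}$ would then have to be absorbed by something — but the remaining second-order ``good'' term $-\tfrac{t}{\beta}\sum|\f_{jp}|^2/(\beta u_{p\bar p})$ is already entirely spent cancelling $t^{-1}(\gamma')^2\sum|\f_p|^2/u_{p\bar p}$, so nothing is left over. (Incidentally, your intermediate formula should read $\gamma'\tr_{\omega_t}(\omega_t-\theta_t)$, not $\gamma'\tr_{\omega_t}(\theta_t-\omega_t)$, though this does not change the conclusion about the $-c\gamma'\tr_{\omega_t}\omega$ term.)

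There is also a gap at the end: you say the $\log\beta$ arising from $\partial_t(t\log\beta)$ is ``beaten by the negative terms of size $\tr_{\omega_t}(\omega)$,'' but what the maximum-principle inequality actually yields is $\sum_p 1/u_{p\bar p}+\sum_p|\f_p|^2/u_{p\bar p}\le C\log\beta$, which bounds $\tr_{\omega_t}(\omega)$ from \emph{above} by $\log\beta$ — so $\tr_{\omega_t}(\omega)$ cannot absorb $\log\beta$. The missing step is to combine the resulting bound $(u_{p\bar p})^{-1}\le C\log\beta$ with the determinant identity $\prod_p u_{p\bar p}=e^{-\dot\f_t+F(t,z,\f_t)}$, bounded thanks to Propositions~\ref{bound 1} and~\ref{bound f' above}, to deduce $u_{p\bar p}\le C(\log\beta)^{n-1}$, and then $\beta=\sum_p|\f_p|^2\le C(\log\beta)^n$, which forces $\log\beta$ to be bounded. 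That algebraic bootstrap, rather than a direct absorption by $\tr_{\omega_t}(\omega)$, is what closes the argument.
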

\begin{proof}
Define 
$$K=t\log|\nabla \f|^2_\omega -\gamma\circ \f=t\log \beta -\gamma\circ \f,$$
where $\beta=|\nabla\f|^2_\omega$ and $\gamma\in C^\infty(\R,\R)$ will be chosen hereafter.
\medskip

If $K$ attains its maximum for $t=\e$, $\beta$ is bounded in terms of $\sup_X\f_0$ and $\e$, since $|\f_t|$ is bounded by a constant depending on $\sup_X\f_0$ and $\e$ for all $t\in[\e,T]$ (Lemma \ref{bound 1} and Lemma \ref{bound from below}).
\medskip

We now assume that $K$ attains its maximum at $(t_0,z_0)$ in $[\e,T]\times X$ with $t_0>\e$. Near $z_0$ we have $\omega=dd^c g$ for some smooth strongly plurisubharmonic $g$ and  $\theta_t=dd^ch_t$ for some smooth function $h_t$, hence $u:=h_t+\f$ is plurisubharmonic near $(t_0,z_0)$. We take normal coordinates for $\omega$ at $z_0$ such that 
\begin{align}\label{1}
&g_{i\bar{k}}(z_0)=\delta_{jk}\\  \label{2}
&g_{i\bar{k}l}(z_0)=0\\ \label{3}
&u_{p\bar{q}}(t_0,z_0)\ \text{is diagonal},
\end{align}
here we denote $\alpha_{j\bar{k}}:=\frac{\partial^2 \alpha}{\partial z_j\partial \bar{z}_k}$.

\medskip We now compute $K_p,K_{pp}$ at $(t_0,z_0)$ in order to use the maximum principle.  At $(t_0,z_0)$ we have $K_p=0$ hence
\begin{equation}\label{4}
t\beta_p=\beta \gamma'\f_p
\end{equation}  
or  
$$(\frac{\beta_p}{\beta})^2=\frac{1}{t^2}(\gamma')^2|\f_p|^2.  $$
Therefore,
\begin{align*}
K_{p\bar{p}}&= t\frac{\beta_{p\bar{p}}\beta-|\beta_p|^2}{\beta^2}-\gamma'' |\f_p|^2-\gamma'\f_{p\bar{p}}\\
&=t\frac{\beta_{p\bar{p}}}{\beta}-[t^{-1}(\gamma')^2+\gamma'']|\f_p|^2-\gamma'\f_{p\bar{p}}.
\end{align*}
\medskip Now we compute $\beta_p, \beta_{p\bar{p}}$ at $(t_0,z_0)$ with $\beta=g^{j\bar{k}}\f_j\f_{\bar{k}}$ where $(g^{j\bar{k}})=[(g_{j\bar{k}})^t]^{-1}$. We have
$$\beta_p=g^{j\bar{k}}_p\f_j\f_{\bar{k}}+g^{j\bar{k}}\f_{jp}\f_{\bar{k}}+g^{j\bar{k}}\f_{jp}\f_{\bar{k}p}.$$
At $(t_0,z_0)$, use (\ref{1}), (\ref{2})
$$g^{j\bar{k}}_p=-g^{j\bar{l}}g_{s\bar{l}p}g^{s\bar{k}}=0,$$
hence 
\begin{equation}\label{bp}
\beta_p=\sum\f_{jp}\f_{\bar{j}}+\sum \f_{p\bar{j}}\f_j,
\end{equation}
and
$$\beta_{p\bar{p}}=g^{j\bar{k}}_{p\bar{p}}\f_j\f_{\bar{k}}+ 2Re\sum \f_{p\bar{p}j}\f_{\bar{j}}+\sum |\f_{jp}^2|+\sum |\f_{j\bar{p}}|^2.$$
Note that $$R_{i\bar{j}k\bar{l}}=-g_{i\bar{j}k\bar{l}}+g^{s\bar{t}} g_{s\bar{j}k}g_{i\bar{t}\bar{l}},$$
hence, at $(t_0,z_0)$ $g^{j\bar{k}}_{p\bar{p}}=-g_{j\bar{k}p\bar{p}}=R_{j\bar{k}p\bar{p}},$ and 
$$\beta_{p\bar{p}}=R_{j\bar{k}p\bar{p}}\f_j\f_{\bar{k}}+ 2Re\sum \f_{p\bar{p}j}\f_{\bar{j}}+\sum |\f_{jp}^2|+\sum |\f_{j\bar{p}}|^2.$$
\medskip Now 
$$
\Delta_{\omega_{t_{0}}}K=\sum_{p=1}^n \frac{K_{p\bar{p}}}{u_{p\bar{p}}},$$
hence 
\begin{align*}
\Delta_{\omega_{t_{0}}}K =&t\sum\frac{R_{i\bar{k}p\bar{p}}\f_j\f_{\bar{k}}}{\beta u_{p\bar{p}}}+2tRe\sum \frac{\f_{p\bar{p}j}\f_{\bar{j}}}{\beta u_{p\bar{p}}}+t\frac{\sum |\f_{jp}|^2+|\f_{j\bar{p}}|^2}{\beta u_{p\bar{p}}}\\
&- \frac{[t^{-1}(\gamma')^2+
\gamma'']|\f_p|^2}{u_{p\bar{p}}}-\frac{\gamma'\f_{p\bar{p}}}{u_{p\bar{p}}}.
\end{align*}
Since $u_{p\bar{p}}=\f_{p\bar{p}}+h_{p\bar{p}}$ near $(t_0,z_0)$, then at $(t_0,z_0)$
$$\sum \frac{\gamma'\f_{p\bar{p}}}{u_{p\bar{p}}}=n\gamma'-\sum \frac{\gamma'h_{p\bar{p}}}{u_{p\bar{p}}}.$$
Moreover, assume that the holomorphic bisectional curvature of $\omega$ is bounded by a constant $B\in \R$ on X, then at $(t_0,z_0)$
$$t\sum\frac{R_{i\bar{k}p\bar{p}}\f_j\f_{\bar{k}}}{\beta u_{p\bar{p}}}\geq -Bt\sum \frac{1}{u_{p\bar{p}}},$$
therefore
\begin{align*}
\Delta_{\omega_{t_{0}}}K\geq &(\gamma'-tB)\sum_p \frac{1}{u_{p\bar{p}}}+2tRe\sum_{j,p} \frac{\f_{p\bar{p}j}\f_{\bar{j}}}{\beta u_{p\bar{p}}}\\
&+\frac{t}{\beta}\sum_{j,p}\frac{ |\f_{jp}|^2+|\f_{j\bar{p}}|^2}{\beta u_{p\bar{p}}} - [t^{-1}(\gamma')^2+
\gamma'']\sum_p\frac{|\f_p|^2}{u_{p\bar{p}}}-n\gamma'+\gamma'\sum\frac{th_{p\bar{p}}}{u_{p\bar{p}}}.
\end{align*}
By the maximum principle, at $(t_0,z_0)$
\begin{align*}
0\leq \left(\frac{\partial}{\partial t}-\Delta_{\omega_t}\right)K
\end{align*}
hence,
\begin{align}\label{ineq 1}\nonumber
0\leq &\log \beta-\gamma'\dot{\f}-(\gamma'-tB)\sum_p \frac{1}{u_{p\bar{p}}}+t\frac{\beta'}{\beta}-2tRe\sum_{j,p} \frac{\f_{p\bar{p}j}\f_{\bar{j}}}{\beta u_{p\bar{p}}}\\
&-\frac{t}{\beta}\sum_{j,p}\frac{ |\f_{jp}|^2+|\f_{j\bar{p}}|^2}{\beta u_{p\bar{p}}} + [t^{-1}(\gamma')^2+
\gamma'']\sum_p\frac{|\f_p|^2}{u_{p\bar{p}}}+n\gamma'.
\end{align}
We will simplify \ref{ineq 1} to get a  bound for $\beta$ at $(t_0,z_0)$. We now estimate 
$$ t\frac{\beta'}{\beta}-2tRe\sum_{j,p} \frac{\f_{p\bar{p}j}\f_{\bar{j}}}{\beta u_{p\bar{p}}}\quad \text{and }  -\frac{t}{\beta}\sum_{j,p}\frac{ |\f_{jp}|^2}{\beta u_{p\bar{p}}} + t^{-1}(\gamma')^2\sum_p\frac{|\f_p|^2}{u_{p\bar{p}}}.$$
\medskip
For the first one, we note that near $(t_0,z_0)$
$$\log\det(u_{p\bar{q}})=\dot{
\f}+F(t,z,\f)+ \log\Omega,$$
hence using 
$$\frac{d}{ds}\det A=A^{\bar{j}i}\left( \frac{d}{ds}A_{i\bar{j}} \right) \det A$$
we have at $(t_0,z_0)$
$$u^{p\bar{p}}u_{p\bar{p}j}=\frac{u_{p\bar{p}j}}{u_{p\bar{p}}}=(\dot{\f}+F(t,z,\f) +\log \Omega )_j.$$
Therefore
\begin{eqnarray*}
2tRe\sum_{j,p} \frac{\f_{p\bar{p}j}\f_{\bar{j}}}{\beta u_{p\bar{p}}}&=&2tRe\sum_{j,p} \frac{(u_{p\bar{p}j}-h_{p\bar{p}j})\f_{\bar{j}}}{\beta u_{p\bar{p}}}\\
&=&\frac{2t}{\beta}Re\sum \large( \dot{\f}+F(t,z,\f) +\log\Omega\large)_j\f_{\bar{j}}-2tRe\sum_{j,p} \frac{h_{p\bar{p}j}\f_{\bar{j}}}{\beta u_{p\bar{p}}} \\
&=& \frac{2t}{\beta} Re\sum (\dot{\f}_j\f_{\bar{j}})+\frac{2t}{\beta}Re\left( (F(t,z,\f)+\log\Omega)_j+\frac{\partial F}{\partial r}\f_j \right)\f_{\bar{j}}\\
&&-2tRe\sum_{j,p} \frac{h_{p\bar{p}j}\f_{\bar{j}}}{\beta u_{p\bar{p}}}.
\end{eqnarray*}
In addition, at $(t_0,z_0)$
\begin{align*}
t\frac{\beta'}{\beta}&=\frac{t}{\beta} \sum g^{j\bar{k}}(\dot{\f}_j\f_{\bar{k}} +\f_j\dot{\f}_{\bar{k}}) \\
&= \frac{2t}{\beta}Re(\dot{\f}_j\f_{\bar{j}}),
\end{align*}
we infer that
\begin{eqnarray*}
t\frac{\beta'}{\beta} -2tRe\sum_{j,p} \frac{u_{p\bar{p}j}\f_{\bar{j}}}{\beta u_{p\bar{p}}}&=&- \frac{2t}{\beta}Re\sum\left( F(t,z,\f) +\log\Omega\right)_j\f_{\bar{j}}-\frac{2t}{\beta}\sum \frac{\partial F}{\partial s}|\f_j|^2\\
&&+2tRe\sum_{j,p} \frac{h_{p\bar{p}j}\f_{\bar{j}}}{\beta u_{p\bar{p}}}.
\end{eqnarray*}
We may assume that $\log\beta>1$ so that 
$$\frac{|\f_{\bar{j}}|}{\beta}<C$$
By the hypothesis that $\frac{\partial F}{\partial s}\geq 0$ there exists $C_1$ depends on $\sup |\f_0|$ and $C_2$ depends on $h$ and $\e$ such that
\begin{equation}\label{5}
t\frac{\beta'}{\beta} -2tRe\sum_{j,p} \frac{u_{p\bar{p}j}\f_{\bar{j}}}{\beta u_{p\bar{p}}}<C_1t+ C_2 t\sum\frac{1}{u_{p\bar{p}}}
\end{equation}
\medskip
we now estimate
$$-\frac{t}{\beta}\sum_{j,p}\frac{ |\f_{jp}|^2}{\beta u_{p\bar{p}}} + t^{-1}(\gamma')^2\sum_p\frac{|\f_p|^2}{u_{p\bar{p}}}.$$
It follows from (\ref{4}) and (\ref{bp}) that
\begin{align*}
 &\beta_p=\sum\f_{jp}\f_{\bar{j}}+\sum \f_{p\bar{j}}\f_j,\\
 & t\beta_p=\beta \gamma'\f_p
\end{align*}
then,
$$\sum_j \f_{jp}\f_{\bar{j}}=(t^{-1}\gamma'\beta-\f_{p\bar{p}})\f_p.$$
Hence
\begin{align}\label{6}\nonumber
\frac{t}{\beta}\sum_{j,p}\frac{|\f_{jp}|^2}{u_{p\bar{p}}}&\geq \frac{t}{\beta^2}\sum_{j,p}\frac{|\sum \f_{jp}\f_{\bar{j}}|^2}{u_{p\bar{p}}}=\frac{t}{\beta^2}\sum \frac{|t^{-1}\gamma'\beta+1-u_{p\bar{p}}|^2|\f_p|^2}{u_{p\bar{p}}}\\ 
&\geq t^{-1}(\gamma')^2\sum \frac{|\f_p|^2}{u_{p\bar{p}}}-C_3\gamma' ,
\end{align}
here $C_3$ depends on $h$ and we assume $\gamma'>0$.

\medskip
We now choose 
$$\gamma(s)=As-\frac{1}{A}s^2$$
with $A$ so large that $\gamma'>0$ and $\gamma''=-2/A<0$ for all $s\leq\sup_{[0,T]\times X} \f_t $. From Lemma \ref{bound f'} we have $\dot{\f}\geq C_0+n\log t$, where $C_0$ depends on $Ocs_X \f_0$. Combining this with (\ref{ineq 1}), (\ref{5}), (\ref{6}) we obtain
\begin{align*}
0\leq -\frac{2}{A}\sum\frac{|\f_p|^2}{u_{p\bar{p}}}-(\gamma'-Bt-C_2t)\sum \frac{1}{u_{p\bar{p}}}+ \log \beta+ C_4\gamma'+C_1t,
\end{align*}
where $C_1,C_2,C_4$ depend on $\sup_X|\f_0|$, $h_t$ and $\e$.
If A is chosen sufficiently large, we have a constant $C_5>0$ such that 
\begin{equation}\label{ineq 2}
\sum\frac{1}{u_{p\bar{p}}} +\sum \frac{|\f_p|^2}{u_{p\bar{p}}}\leq C_5\log \beta,
\end{equation}
so we get
$(u_{p\bar{p}})^{-1}\leq C_5\log\beta$ for $1\leq p\leq n$. From (\ref{bound 1}) and (\ref{bound f' above}) we have at $(t_0,z_0)$ $$ \prod_p u_{p\bar{p}}=e^{-\dot{\f}_t+F(t,x,\f_t)}\leq C_6, $$
where $C_6$ depends on $\sup_X|\f_0|,\e$.  Then we get
$$u_{p\bar{p}}\leq C_6(C_5\log \beta)^{n-1},$$
so from (\ref{ineq 2}) we have
$$\beta=\sum |\f_p|^2\leq C_6(C_5\log \beta)^n,$$
hence $\log \beta< C_7$ at $(t_0,z_0)$. This shows that $\beta=|\nabla\f(z)|^2_\omega<e^{C/t}$ for some $C$ depending on $\sup|\f_0|$ and $\e$.
\end{proof}

\subsection{Bounding $\Delta \f_t $} 
We now use previous a priori estimates above to get a estimate of $\Delta \f$. The estimate on $|\nabla \f|^2_{\omega}$ is needed here, because $F(t,z,\f)$ depends on  $\varphi$, in contrast with \cite{GZ13,DiNL14}.
\begin{lem}\label{bound delta}
For all $z\in X$ and $s,t>0$ such that $s+t\leq T$,
$$0\leq t\log\tr_\omega(\omega_{t+s})\leq AOsc_X(\f_s)+C+[C-n\log s+ AOsc_X(\f_s)]t$$
for some uniform constants $C,A>0$. 
\end{lem}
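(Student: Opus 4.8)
The plan is to run a maximum-principle argument on a well-chosen auxiliary quantity built from $\log\tr_\omega(\omega_{t+s})$, following the classical Aubin--Yau / Siu second-order estimate adapted to the parabolic setting (compare \cite{SzTo, GZ13, DiNL14}). Fix $s>0$ and work on the time interval where $t\in[0,T-s]$, writing $\omega_{t+s}=\theta_{t+s}+dd^c\varphi_{t+s}$. I would consider the function
\[
H(t,z)=t\log\tr_\omega(\omega_{t+s})-A\varphi_{t+s}(z)+\alpha(t),
\]
where $A>0$ is a large constant (depending on a lower bound $B$ for the holomorphic bisectional curvature of $\omega$ and on the bound $\dot\theta_t\le A'\omega$ coming from \eqref{asume 1}), and $\alpha(t)$ is an explicit affine-type correction chosen so that the final inequality comes out in the stated form. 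The first order of business is to compute $\big(\tfrac{\partial}{\partial t}-\Delta_{\omega_{t+s}}\big)H$. For the $\log\tr_\omega(\omega_{t+s})$ term, the key input is Proposition \ref{Laplace ineq}: since $\omega_{t+s}$ evolves by the flow, $\mathrm{Ric}(\omega_{t+s})$ is controlled in terms of $\mathrm{Ric}(\Omega)$, $dd^c(\dot\varphi+F)$, hence
\[
\Delta_{\omega_{t+s}}\log\tr_\omega(\omega_{t+s})\ \ge\ -\frac{\tr_\omega\big(\mathrm{Ric}(\omega_{t+s})\big)}{\tr_\omega(\omega_{t+s})}+B\,\tr_{\omega_{t+s}}(\omega),
\]
and the ``bad'' term $dd^c(\dot\varphi+F)$ contributes a factor that, after differentiating $\tfrac{\partial}{\partial t}\log\tr_\omega(\omega_{t+s})$, partially cancels — this cancellation is the reason one differentiates $\log\tr$ rather than $\tr$ itself. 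The $-A\varphi_{t+s}$ term produces, via $\Delta_{\omega_{t+s}}\varphi_{t+s}=n-\tr_{\omega_{t+s}}(\theta_{t+s})$ and the flow equation, a term $-A\dot\varphi_{t+s}+A\,\tr_{\omega_{t+s}}(\theta_{t+s})+(\text{bounded})$, where the crucial gain is the positive term $A\,\tr_{\omega_{t+s}}(\theta_{t+s})\ge \tfrac{A}{2}\tr_{\omega_{t+s}}(\omega)$.

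Next I would combine these: choosing $A$ large enough relative to $|B|$, the term $\big(\tfrac{A}{2}+Bt\big)\tr_{\omega_{t+s}}(\omega)$ is nonnegative (for $t\le T$ small), so it can be used to absorb the negative curvature contribution. Here one invokes Lemma \ref{Trace ineq}: the Cauchy--Schwarz-type inequality $\tr_\omega(\omega_{t+s})\le \big(\tfrac{\omega_{t+s}^n}{\omega^n}\big)\big(\tr_{\omega_{t+s}}(\omega)\big)^{n-1}$ together with the bound on $\tfrac{\omega_{t+s}^n}{\omega^n}=e^{\dot\varphi+F}\tfrac{\Omega}{\omega^n}$ — but $\dot\varphi_{t+s}$ is only bounded \emph{above} in a way that degenerates near $s=0$, via Proposition \ref{bound f' above}, and bounded \emph{below} by Proposition \ref{bound f'}. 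This is exactly where the dependence on $Osc_X(\varphi_s)$ and on $-n\log s$ enters: $\dot\varphi_{t+s}\ge n\log(t+s)-A\,Osc_X(\varphi_0)-C\ge n\log s - \dots$ contributes the $-n\log s$ term, and $Osc_X(\varphi_{t+s})\le C\,Osc_X(\varphi_s)+C$ (from the oscillation bound propagating along the flow, Proposition \ref{comparison} applied to $\varphi$ and $\varphi+Osc$) supplies the $Osc_X(\varphi_s)$ dependence. The gradient estimate Proposition \ref{bound grad} is needed to control the terms coming from $dd^c F(t,z,\varphi)$, since $F$ depends on $\varphi$, precisely as the lemma statement warns; without it the term $|\nabla\varphi|^2_\omega\,\tr_{\omega_{t+s}}(\omega)$ could not be absorbed.

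Assembling the inequality, at a maximum point $(t_0,z_0)$ of $H$ — which, because of the factor $t$ in front of $\log\tr$, is harmless if $t_0=0$, and otherwise satisfies $\big(\tfrac{\partial}{\partial t}-\Delta_{\omega_{t+s}}\big)H\ge 0$ — one arrives at an inequality of the shape
\[
\log\tr_\omega(\omega_{t_0+s})(z_0)\ \le\ C\,\tr_{\omega_{t_0+s}}(\omega)(z_0)^{?}+\dots\quad\Longrightarrow\quad \tr_\omega(\omega_{t_0+s})(z_0)\le C',
\]
after using the elementary inequality $x\le e^{\epsilon x}\cdot C_\epsilon$ to swallow a single power of $\tr_{\omega_{t+s}}(\omega)$ (bounded by $\tr_\omega(\omega_{t+s})^{n-1}$ times a controlled factor) against $\log\tr_\omega(\omega_{t+s})$; this is the same trick used in the proof of Proposition \ref{bound grad}. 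Evaluating $H$ at $(t_0,z_0)$ and then at an arbitrary point, and unwinding the definitions of $A$, $\alpha$, and the constants, yields the stated bound
\[
0\le t\log\tr_\omega(\omega_{t+s})\le A\,Osc_X(\varphi_s)+C+\big[C-n\log s+A\,Osc_X(\varphi_s)\big]t.
\]
The lower bound $0\le\log\tr_\omega(\omega_{t+s})$ is immediate since $\tr_\omega(\omega_{t+s})\ge n\big(\tfrac{\omega_{t+s}^n}{\omega^n}\big)^{1/n}$ by Lemma \ref{Trace ineq}, but more simply since each eigenvalue pairing gives $\tr_\omega(\omega_{t+s})\ge$ a positive quantity; in fact one only needs $\tr_\omega(\omega_{t+s})\ge \tfrac12\tr_\omega(\theta_{t+s})\ge$ const, and after adjusting $C$ this gives nonnegativity of $t\log\tr_\omega$. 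The main obstacle I anticipate is bookkeeping: tracking how the $\dot\varphi$ bounds (which blow up like $-n\log s$ and $-n\log t$ respectively near the origin) interact with the curvature term and the $F$-dependent terms so that, after the Cauchy--Schwarz step, the surviving dependence collapses exactly into $Osc_X(\varphi_s)$ and $n\log s$ with the clean linear-in-$t$ remainder — getting the constants to line up without introducing any hidden dependence on $\inf_X\varphi_0$ is the delicate point, and it relies essentially on the fact that all prior estimates were engineered to be independent of $\inf_X\varphi_0$.
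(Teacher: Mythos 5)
Your overall plan is the right one and matches the paper's: run the maximum principle on $P=t\log\tr_\omega(\omega_{t+s})-A\varphi_{t+s}$, use Siu's Laplacian inequality (Proposition \ref{Laplace ineq}) on $\log\tr$, absorb the curvature term via $A\,\tr_{\omega_{t+s}}(\theta_{t+s})\geq\tfrac{A}{2}\tr_{\omega_{t+s}}(\omega)$ after taking $A$ large, control the $dd^cF(t,z,\varphi)$ contribution with the gradient estimate (Proposition \ref{bound grad}), pair $\log\tr_\omega(\omega_{t+s})$ against $\tr_{\omega_{t+s}}(\omega)$ via Lemma \ref{Trace ineq} and $(n-1)\log x\leq x+C_n$, and finally feed in the lower bound on $\dot\varphi$ from Proposition \ref{bound f'} at the maximum point. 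You also correctly identified that the $F$-dependence is exactly why the gradient bound is needed here (as the paper remarks), and that the factor $t$ in front of $\log\tr$ makes $t_0=0$ harmless. This is essentially the paper's proof.

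There is, however, one genuine gap in your explanation of where the $Osc_X(\varphi_s)$ dependence comes from, and it is exactly the delicate point you flagged at the end. You propose to use
\[
\dot\varphi_{t+s}\geq n\log(t+s)-A\,Osc_X(\varphi_0)-C\geq n\log s-A\,Osc_X(\varphi_0)-C,
\]
and then separately invoke an ``oscillation bound propagating along the flow'' to convert $Osc_X(\varphi_0)$ into $Osc_X(\varphi_s)$. This does not work: the constant $Osc_X(\varphi_0)$ is sitting inside the $\dot\varphi$ lower bound itself, and a bound on $Osc_X(\varphi_{t+s})$ in terms of $Osc_X(\varphi_s)$ does nothing to remove it. In the intended application the lemma is applied to the approximants $\varphi_{t,j}$ whose initial oscillation $Osc_X(\varphi_{0,j})$ blows up as $j\to\infty$, so any appearance of $Osc_X(\varphi_0)$ in the conclusion is fatal. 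The paper's actual mechanism is to \emph{restart the flow at time $s$}: since $\varphi_s$ is smooth (hence bounded with $Osc_X(\varphi_s)$ controlled uniformly in $j$ by the Ko\l{}odziej step), Proposition \ref{bound f'} applied to the restarted flow $\tau\mapsto\varphi_{\tau+s}$ with initial data $\varphi_s$ gives directly
\[
\dot\varphi_{t+s}\geq n\log t-A\,Osc_X(\varphi_s)-C,
\]
which involves only $\varphi_s$ and never sees $\varphi_0$. This restarting step is the key to obtaining an estimate independent of $\inf_X\varphi_0$, and should replace your two-step patch.

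Minor remarks: the paper does not use an additional correction $\alpha(t)$ in $P$ (the lemma's constants all come out from the final inequality directly), and the elementary inequality you invoke is used in the form $(n-1)\log x\leq x+C_n$ rather than $x\leq e^{\epsilon x}C_\epsilon$, but these are cosmetic differences.
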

\begin{proof}
We define
$$P=t\log\tr_{\omega}(\omega_{t+s}) -A \f_{t+s},$$
and $$u=\tr_{\omega}(\omega_{t+s})$$
with $A>0$ to be chosen latter. We set $\Delta_t:=\Delta_{\omega_{t+s}}$. Now,
\begin{align*}
\frac{\partial}{\partial t}P&=\log u+t\frac{\dot{u}}{u}-A\dot{\f}_{t+s},\\
\Delta_t P&=t\Delta_t\log u-A\Delta_t\f_{t+s}
\end{align*}
hence
\begin{equation}\label{heat operator}
\left( \frac{\partial}{\partial t}-\Delta_t \right) P=\log u+ t\frac{\dot{u}}{u} - A\dot{\f_{t+s}}-t\Delta_t \log u+A\Delta_t\f_{t+s}.
\end{equation}
First, we have 
$$A\Delta_t\f_{t+s} =An-A\tr_{\omega_{t+s}}(\theta_{t+s})\leq An-\frac{A}{2}\tr_{\omega_{t+s}}(\omega),$$
and by Proposition \ref{Laplace ineq} 
$$-t\Delta_t \log u\leq B\tr_{\omega_{t+s}}(\omega) +t\frac{\tr_{\omega}(Ric(\omega_{t+s}))}{\tr_{\omega}(\omega_{t+s})}.$$
Moreover,
\begin{align*}
\frac{t\dot{u}}{u}&=\frac{t}{u} \bigg[\Delta_\omega\left( \log \omega_{t+s}^n/\omega^n -\log\Omega/\omega^n -F(t,z,\f_{t+s}) \right) +\tr_{\omega}\dot{\theta}_t\bigg],\\
&=\frac{t}{u}\bigg[-\tr_{\omega}(Ric\ \omega_{t+s})+tr_\omega(\dot{\theta}_t+Ric\ \omega)-\Delta_\omega \left(F(t,z,\f)+\log\Omega/\omega^n\right)\bigg],
\end{align*}
with $u=\tr_{\omega}(\omega_{t+s})$, and $$\tr_{\omega_{t+s}}(\omega)\tr_{\omega}(\omega_{t+s})\geq n,$$
we get
\begin{equation}\label{ineq 3}
-t\Delta_t \log u+ \frac{t\dot{u}}{u}\leq (B+C_1)t\tr_{\omega_{t+s}}(\omega)-t
\frac{\Delta_\omega \big[F(t,z,\f)+\log\Omega/\omega^n\big]}{\tr_{\omega}(\omega_{t+s})}.
\end{equation}

Now $$\Delta_\omega F(t,z,\f_{t+s})=\Delta_\omega F(z,.)+2Re\bigg[ g^{j\bar{k}}\left(\frac{\partial F}{\partial s}\right)_{j}\f_{\bar{k}}\bigg]+\frac{\partial F}{\partial s}\Delta_\omega \f +\frac{\partial^2 F}{\partial s^2}|\nabla \f|^2_\omega.$$
So there are constants $C_2,C_3,C_4$ such that 
$$\big|\Delta_\omega\big( F(t,z,\f_{t+s})+\log\Omega/\omega^n\big)\big|\leq C_2+C_3|\nabla\f|^2_\omega +C_4\tr_\omega\omega_{t+s}.$$
Then we infer 
$$-\frac{\Delta_\omega [F(t,z,\f)+\log\Omega/\omega^n]}{\tr_{\omega}(\omega_{t+s})}\leq \frac{1}{n}\tr_{\omega_{t+s}}(\omega)(C_2+C_3|\nabla \f|^2_\omega)+C_4,$$
so from Lemma \ref{bound grad} and (\ref{ineq 3}) we have
\begin{equation}\label{ineq 4}
-t\Delta_t \log u+ \frac{t\dot{u}}{u}\leq (B+C_5)t\tr_{\omega_{t+s}}(\omega) +C_6.
\end{equation}

\medskip
From Lemma \ref{Trace ineq}  and the inequality $(n-1)\log x\leq x+C_n$, 
\begin{align*}
\log u&=\log \tr_{\omega}(\omega_{t+s})\leq \log \left( n\left( \frac{\omega_{t+s}^n}{\omega^n}\right) \tr_{\omega_{t+s}}(\omega)^{n-1} \right)\\
&=\log n + \dot{\f}_{t+s}+F(t,z,\f) +(n-1)\log \tr_{\omega_{t+s}}(\omega)\\
&\leq \dot{\f}_{t+s}+ \tr_{\omega_{t+s}}(\omega) +C_7.
\end{align*}

It follows from (\ref{heat operator}), (\ref{ineq 3}) and (\ref{ineq 4}) that
$$\left( \frac{\partial}{\partial t}-\Delta_t \right) P\leq C_8-(A-1)\dot{\f}_{t+s}+ [(B+C_5)t+1-A/2]\tr_{\omega_{s+t}}\omega. $$

\medskip We choose $A$ sufficiently large such that $(B+C_5)t+1-A/2<0$. Applying Proposition \ref{bound f'},
$$\left( \frac{\partial}{\partial t}-\Delta_t \right) P\leq C_8-(A-1)(n\log s-AOsc_X\f_s-C).$$
Now suppose $P$ attains its maximum at $(t_0,z_0)$. If $t_0=0$, we get the desired inequality. Otherwise, at $(t_0,z_0)$
$$0\leq \left( \frac{\partial}{\partial t}-\Delta_t \right) P\leq C_8-(A-1)(n\log s-AOsc_X\f_s-C).$$
Hence we get $$t\log\tr_\omega(\omega_{t+s})\leq AOsc_X(\f_s)+C+[C-n\log s+ AOsc_X(\f_s)]t.$$

\end{proof}
\begin{cor}
For all $(t,x)\in (0,T]\times X$
$$0\leq t\log \tr_\omega(\omega_{t+s})\leq 2 AOsc_X(\f_{t/2})+C'.$$
\end{cor}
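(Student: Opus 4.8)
The plan is to read the corollary off Lemma \ref{bound delta} by inserting the balanced splitting of the time variable. Fix $\tau\in(0,T]$ and apply Lemma \ref{bound delta} with both parameters $s$ and $t$ there equal to $\tau/2$; these satisfy $s+t=\tau\le T$, so the hypothesis is met. Since the lemma's left–hand side is $t\log\tr_\omega(\omega_{t+s})=\tfrac{\tau}{2}\log\tr_\omega(\omega_\tau)$ and the quantity $Osc_X(\f_s)$ becomes $Osc_X(\f_{\tau/2})$, this gives
$$\frac{\tau}{2}\log\tr_\omega(\omega_\tau)\le A\,Osc_X(\f_{\tau/2})+C+\Big[C-n\log(\tau/2)+A\,Osc_X(\f_{\tau/2})\Big]\frac{\tau}{2}.$$

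Next I would multiply through by $2$, obtaining
$$\tau\log\tr_\omega(\omega_\tau)\le (2+\tau)\,A\,Osc_X(\f_{\tau/2})+2C+C\tau-n\tau\log(\tau/2),$$
and then absorb the last three terms into one uniform constant. Indeed, for $0<\tau\le T$ one has $C\tau\le CT$, and $-n\tau\log(\tau/2)=-n\tau\log\tau+n\tau\log 2$ is bounded above on $(0,T]$ because $x\mapsto -x\log x$ is; hence $2C+C\tau-n\tau\log(\tau/2)\le C'$ with $C'$ depending only on $n,C,T$. Likewise $(2+\tau)A\le(2+T)A=:2\widetilde A$. Combining this with the lower bound $0\le \tau\log\tr_\omega(\omega_\tau)$ already contained in Lemma \ref{bound delta}, we get $0\le \tau\log\tr_\omega(\omega_\tau)\le 2\widetilde A\,Osc_X(\f_{\tau/2})+C'$, which is the assertion after writing $t$ for $\tau$ and $A$ for $\widetilde A$.

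I do not expect a genuine obstacle here: all the analytic content is already packaged in Lemma \ref{bound delta}, and the corollary is simply what one reads off by balancing the two time scales ($s\sim t/2$) and discarding lower–order contributions. The only point needing a word of care is that the computation produces the coefficient $(2+t)A$, rather than $2A$, in front of $Osc_X(\f_{t/2})$; this is harmless since $t\le T$ and one merely renames the constant — or, equivalently, shrinks $T$, which is legitimate because, as emphasized in Section \ref{strategy}, only the behaviour of the flow near $t=0$ is relevant.
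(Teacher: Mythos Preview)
Your argument is correct and is exactly the intended derivation: the paper states the corollary without proof, and the obvious way to obtain it from Lemma~\ref{bound delta} is precisely the balanced splitting $s=t=\tau/2$ together with the elementary boundedness of $\tau\mapsto -\tau\log(\tau/2)$ on $(0,T]$. Your remark about the coefficient $(2+\tau)A$ versus $2A$ is also to the point; absorbing the extra $\tau A$ into a renamed constant (or using $T$ small) is all that is needed, and you have implicitly corrected the harmless typo $\omega_{t+s}$ in the statement to $\omega_t$.
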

\subsection{Higher order estimates}
For the higher order estimates, we can follow Sz\'ekelyhidi-Tosatti \cite{SzTo} by bounding $$S=g_\f^{i\bar{p}}g_\f^{q\bar{j}}g_\f^{k\bar{r}}\f_{i\bar{j}k}\f_{\bar{p}q\bar{r}}\, \text{ and } |Ric(\omega_t)|_{\omega_t},$$
then using the parabolic Schauder estimates in order to obtain bounds on all higher order derivatives for $\f$. Besides we can also combine previous estimates with Evans-Krylov and Schauder estimates (Theorem \ref{Evan-Krylov 2}) to get the $C^k$ estimates for all $k\geq 0$.
\begin{thm}\label{full estimates}
For each $\e>0$ and $k\in \N$, there exists $C_k(\e)$ such that
$$||\f||_{\mathcal{C}^k([\e,T]\times X)}\leq C_k(\e).$$ 
\end{thm}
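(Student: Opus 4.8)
The plan is to assemble Theorem \ref{full estimates} from the chain of a priori estimates already established, together with the parabolic Evans--Krylov and Schauder machinery of Theorem \ref{Evan-Krylov 2}. The key point is that all the estimates proved so far (Lemma \ref{bound 1}, Lemma \ref{bound from below}, Proposition \ref{bound f' above}, Proposition \ref{bound f'}, Proposition \ref{bound grad}, Lemma \ref{bound delta} and its corollary) give, for each fixed $\e\in(0,T)$, a uniform bound
\begin{equation*}
\sup_{[\e,T]\times X}\Big(|\f|+\big|\tfrac{\partial\f}{\partial t}\big|+|\nabla\f|_\omega+|\Delta_\omega\f|\Big)\leq C(\e),
\end{equation*}
where $C(\e)$ depends only on $\e$, $T$, $\sup_X\f_0$ (equivalently $Osc_X\f_0$ together with the bounds already in place) and the geometric data $\omega,\theta_t,\Omega,F$. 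First I would record this combined bound explicitly, chaining the corollary after Lemma \ref{bound delta} (which controls $t\log\tr_\omega(\omega_t)$, hence $\Delta_\omega\f_t$ since $\Delta_\omega\f_t=\tr_\omega(\omega_t)-\tr_\omega\theta_t$ and $\theta_t$ is comparable to $\omega$) with Proposition \ref{bound f' above} and Proposition \ref{bound f'} for $\dot\f$, Proposition \ref{bound grad} for the gradient, and Lemma \ref{bound 1}/\ref{bound from below} for $\f$ itself. Note that replacing $\e$ by $\e/2$ throughout lets one absorb the $Osc_X(\f_s)$ terms appearing with $s=t/2$ into a constant depending only on $\e/2$.

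Next I would localize: cover $X$ by finitely many coordinate charts $U_\alpha\Subset\C^n$ on which $\theta_t=dd^ch_{t,\alpha}$ for smooth potentials $h_{t,\alpha}$, and set $u_\alpha:=\f+h_{t,\alpha}$ on each chart, so that $\omega_t=dd^cu_\alpha$ and $(CMAF)$ takes the local form
\begin{equation*}
\frac{\partial u_\alpha}{\partial t}=\log\det\Big(\frac{\partial^2 u_\alpha}{\partial z_j\partial\bar z_k}\Big)+f_\alpha(t,z,u_\alpha),
\end{equation*}
with $f_\alpha(t,z,s)=\log(\Omega/\omega_U^n)-\frac{\partial h_{t,\alpha}}{\partial t}-F(t,z,s-h_{t,\alpha})$, a smooth function of $(t,z,s)$ on $[0,T]\times\bar U_\alpha\times\R$. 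The uniform bound above transfers to $u_\alpha$ (the $h_{t,\alpha}$ are smooth and $t$-dependence is controlled), so the hypotheses of Theorem \ref{Evan-Krylov 2} are met on each $U_\alpha$ with a constant under control for $t\geq\e/2$. Applying Theorem \ref{Evan-Krylov 2} on each chart with parameter $\e/2$ and any $p=k$ yields $\|u_\alpha\|_{C^k([\e,T]\times K_\alpha)}\leq C_0(k,\e)$ on compact $K_\alpha$ exhausting $U_\alpha$; patching the charts and converting back from $u_\alpha$ to $\f$ (again using smoothness of $h_{t,\alpha}$) gives the global bound $\|\f\|_{\mathcal C^k([\e,T]\times X)}\leq C_k(\e)$.

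The main obstacle is really bookkeeping rather than a new idea: one must make sure that every constant entering Theorem \ref{Evan-Krylov 2}'s hypothesis — in particular $\|f_\alpha\|_{C^q([0,T]\times\bar U_\alpha\times[-C,C])}$ for $q\geq p-2$ — is indeed controlled. This is where the presence of $\varphi$ inside $F$ matters: $f_\alpha$ depends on $u_\alpha$ only through $s\mapsto F(t,z,s-h_{t,\alpha})$, which is smooth with all derivatives bounded on the relevant compact range of $s$ once $\f$ is uniformly bounded on $[\e/2,T]\times X$ (Lemma \ref{bound 1}, Lemma \ref{bound from below}), so $q$-norms of $f_\alpha$ are automatically finite. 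The only subtlety is that the constant $C_k(\e)$ blows up as $\e\to0$ (through the $t^{-1}$ and $\log t$ factors in the lower-order estimates), which is expected and harmless. I would close by remarking that, alternatively, one may bootstrap via Sz\'ekelyhidi--Tosatti \cite{SzTo} by first bounding $S=g_\f^{i\bar p}g_\f^{q\bar j}g_\f^{k\bar r}\f_{i\bar jk}\f_{\bar pq\bar r}$ and $|Ric(\omega_t)|_{\omega_t}$ and then invoking the parabolic Schauder estimates (Lemma \ref{second order schauder}) iteratively; both routes give the claimed bound.
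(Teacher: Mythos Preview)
Your proposal is correct and follows precisely the route the paper indicates: assemble the $C^0$, $\dot\f$, $|\nabla\f|$, and $\Delta\f$ bounds from Lemmas \ref{bound 1}, \ref{bound from below}, \ref{bound delta} and Propositions \ref{bound f' above}, \ref{bound f'}, \ref{bound grad}, then localize and feed them into Theorem \ref{Evan-Krylov 2} (with the Sz\'ekelyhidi--Tosatti alternative noted as well). The only slip is a harmless sign error in your explicit formula for $f_\alpha$ (it should read $-\log(\Omega/\omega_U^n)+\partial_t h_{t,\alpha}-F(t,z,s-h_{t,\alpha})$), which does not affect the argument since all that matters is that $f_\alpha$ is smooth with controlled $C^q$ norm on the relevant compact range of $s$.
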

\section{Proof of Theorem A}\label{proof}
\subsection{Convergence in $L^1$}\label{conv L1} We approximate $\f_0$ by a decreasing sequence $\f_{0,j}$ of smooth $\omega$-psh fuctions (using \cite{Dem92} or \cite{BK07}). Denote by $\varphi_{t,j}$ the smooth family of $\theta_t$-psh functions satisfying on $[0,T]\times X$
$$
\frac{\partial \f_t}{\partial t}=\log  \frac{(\theta_t+dd^c \f_t)^n}{\Omega} -F(t,z,\f)
$$
with initial data $\f_{0,j}$.
\medskip

It follows from the comparison principle (Proposition \ref{comparison}) that $j\mapsto \f_{j,t}$ is non-increasing. Therefore we can set
$$\f_t(z):=\lim_{j\rightarrow +\infty} \f_{t,j}(z).$$
Thanks to Lemma \ref{bound from below} the function $t\mapsto \sup_X \f_{t,j}$  is uniformly bounded, hence $\f_t$ is a well-defined $\theta_t$-psh function. Moreover, it follows from Theorem \ref{full estimates} that $\f_t$ is also smooth in $(0,T]\times X$ and satisfies
$$\frac{\partial \f_t}{\partial t}=\log  \frac{(\theta_t+dd^c \f_t)^n}{\Omega} -F(t,z,\f).$$

Observe that $(\f_t)$ is relatively compact in $L^1(X)$ as $t\rightarrow 0^+$, we now show  that $\f_t\rightarrow\f_0$ in $L^1(X)$ as $t\searrow 0^+$.
\medskip

First, let $\f_{t_k}$ is a subsequence of $(\f_t)$ such that $\f_{t_k}$ converges to some function $\psi$ in $L^1(X)$ as $t_k\rightarrow 0^+$. By the properties of plurisubharmonic functions, for all $z\in X$
$$\limsup_{t_k\rightarrow 0} \f_{t_k}(z)\leq \psi(z),$$
with equality almost everywhere. We infer that for almost every $z\in X$
$$\psi(z)=\limsup_{t_k\rightarrow 0}\f_{t_k}(z)\leq \limsup_{t_k\rightarrow 0} \f_{t_k,j}(z)=\f_{0,j}(z),$$
by continuity of $\f_{t,j}$ at $t=0$. Thus $\psi\leq \f_0$ almost everywhere.

\medskip
Moreover, it follows from Lemma \ref{bound from below} that
$$\f_t(z)\geq (1-2\beta t)\f_0(z)+\alpha t u(z)+n(t\log t-t)+ At, $$
with $u$ continuous, so
$$\f_0\leq \liminf_{t\rightarrow 0}\f_t.$$
Since $\psi\leq \f_0$ almost everywhere, we get $\psi=\f_0$ almost everywhere, so $\f_t\rightarrow \f_0$ in $L^1$.
\medskip

We next consider some cases in which the initial condition is slightly more regular.
\subsection{Uniform convergence}
If the initial condition $\f_0$ is continuous then by  Proposition \ref{comparison} we get $\f_t\in C^0([0,T]\times X)$, hence $\f_t$ uniformly converges to $\f_0$ as $t\rightarrow 0^+$. 
\subsection{Convergence in capacity}
When $\f_0$ is only bounded, we prove this convergence moreover holds in capacity (Definition \ref{def conv in cap}). It is the strongest convergence we can expect in the bounded case (cf. \cite{GZ05}). First, observe that it is sufficient to prove that $u_t:=\f_t+c(t)$ converges to $\f_0$ as $t\rightarrow 0$ in capacity, where $c(t)$ satisfies $\f_t+c(t)\geq \f_0$ as in Proposition \ref{bound from below}. Since $\f_t$ converges to $\f_0$, so does $u_t$, and we get 
$$\limsup_{t\rightarrow 0}u_t\leq \f_{0,j},$$
for all $j>0$, where $(\f_{0,j})$ is a family of smooth $\omega$-psh functions decreasing to $\f_0$ as in Section \ref{conv L1}. It follows from Hartogs' Lemma that for each $j>0$ and $\e>0$, there exists $t_j>0$ such that
$$u_t\leq \f_{0,j}+\e,\, \forall \, 0\leq t\leq t_j.$$
Therefore
$$Cap_\omega(\{u_{t}>\f_0+2\e\})\leq Cap_\omega(\{\f_{0,j}>\f_0+\e\}),$$
for all $t\leq t_j$. Since $\f_{0,j}$ converges to $\f_0$ in capacity (Proposition \ref{conv in cap}), the conclusion follows.  
\subsection{Convergence in energy} 
Using the same notations as in Section \ref{MA energy} we get the following monotonicity property of the energy.
\begin{prop}\label{monotonicity of energy}
Suppose $\f_t$ is a solution of $(CMAF)$ with initial data $\f_0\in \mathcal{E}^1(X,\omega)$. Then there exists a constant $C\geq0$ such that $t\mapsto E(\f_t)+Ct$ is increasing on $[0,T]$. 
\end{prop}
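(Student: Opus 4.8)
The plan is to differentiate the energy $E(\f_t)$ along the flow and show the derivative is bounded below by a constant. First I would work under the assumption that $\f_0$ is smooth and strictly $\omega$-psh, so that $\f_t$ is smooth on $[0,T]\times X$ by the standard existence theory; the general case $\f_0\in\mathcal E^1$ then follows by approximating $\f_0$ from above by a decreasing sequence $\f_{0,j}$ of smooth strictly $\omega$-psh functions as in Section \ref{conv L1}, using that $E$ is continuous along such decreasing sequences (cf. \cite{BBGZ}) and passing to the limit in the inequality $E(\f_{t,j})+Ct\le E(\f_{s,j})+Cs$ for $s\ge t$. So the heart of the matter is the smooth computation.

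For the smooth case, recall the classical first-variation formula for the (time-dependent) Aubin--Yau functional: since $\theta_t$ itself varies, one has
\begin{equation*}
\frac{d}{dt}E(\f_t)=\frac{1}{V}\int_X \dot\f_t\,\omega_t^n \;+\; (\text{terms involving }\dot\theta_t),
\end{equation*}
where $\omega_t=\theta_t+dd^c\f_t$ and the correction terms come from differentiating $\theta_t^{n-j}$ inside the definition of $E_{\theta_t}$; these correction terms are integrals of $\dot\theta_t$ against products of $\omega_t$ and $\theta_t$, hence bounded in absolute value by a uniform constant because of the normalization \eqref{asume 1} on $\theta_t$ and the boundedness of $\dot\theta_t$. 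Next, using $(CMAF)$ we substitute $\dot\f_t=\log\big(\omega_t^n/\Omega\big)-F(t,z,\f_t)$, so that
\begin{equation*}
\frac{1}{V}\int_X\dot\f_t\,\omega_t^n=\frac{1}{V}\int_X\log\frac{\omega_t^n}{\Omega}\,\omega_t^n\;-\;\frac{1}{V}\int_X F(t,z,\f_t)\,\omega_t^n.
\end{equation*}
The second integral is uniformly bounded below since $F$ is continuous and $\f_t$ is uniformly bounded (Lemma \ref{bound 1} and Lemma \ref{bound from below} give two-sided bounds on $[\e,T]$, and on $[0,\e]$ we may use the approximants). For the first integral, I would invoke Jensen's inequality: since $\omega_t^n/V$ is a probability measure (after dividing) one has $\int_X\log(\omega_t^n/\Omega)\,\omega_t^n\ge -C$ by concavity of $\log$ together with $\int_X\Omega$ bounded, or alternatively just note $x\log x\ge -1/e$ pointwise applied to the density. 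This yields $\frac{d}{dt}E(\f_t)\ge -C$ for a uniform $C\ge 0$, which is exactly the statement that $t\mapsto E(\f_t)+Ct$ is nondecreasing.

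The main obstacle I anticipate is twofold: first, getting the first-variation formula for $E_{\theta_t}$ correct when $\theta_t$ is time-dependent — one must carefully track the extra terms from $\partial_t(\theta_t^{n-j})$ and confirm they are harmless, which is a bookkeeping computation using integration by parts ($\int_X dd^c\alpha\wedge\beta=\int_X\alpha\wedge dd^c\beta$) to move $dd^c$ off $\dot\f_t$; second, justifying the approximation step at $t=0$, i.e. that $E(\f_{t,j})\to E(\f_t)$ as $j\to\infty$ and that the limiting inequality survives — this needs the monotonicity $E(\f_{t,j})\ge E(\f_{t,j+1})$ from $\f_{t,j}$ being decreasing in $j$ (comparison principle, Proposition \ref{comparison}) and the lower semicontinuity / continuity properties of $E$ on $\mathcal E^1(X,\omega)$ established in \cite{GZ07,BBGZ}. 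Neither is deep, but both require care; the energy-density lower bound via Jensen is routine once the variation formula is in hand.
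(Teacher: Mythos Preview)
Your overall strategy matches the paper's: differentiate $E(\f_t)$, use Jensen on the $\dot\f_t$-term, and control the extra terms coming from the time-dependence of $\theta_t$. The Jensen step is essentially identical to the paper (which writes $\dot\f_t=\log(\omega_t^n/e^F\Omega)$ and applies concavity in one shot).

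However, there is a genuine gap in your treatment of the $\dot\theta_t$-terms. These are \emph{not} merely ``integrals of $\dot\theta_t$ against products of $\omega_t$ and $\theta_t$''; the first-variation formula reads
\[
\frac{dE(\f_t)}{dt}=\frac{1}{V}\int_X\dot\f_t\,\omega_t^n+\frac{1}{(n+1)V}\sum_{j=0}^n\int_X \f_t\,\dot\theta_t\wedge\big[j\theta_t+(n-j)\omega_t\big]\wedge\omega_t^{j-1}\wedge\theta_t^{n-j-1},
\]
so each correction term carries a factor of $\f_t$. Since $\f_0\in\mathcal E^1(X,\omega)$ need not be bounded below, Lemma~\ref{bound from below} only gives $\f_t\ge\f_0-c(t)$, and your appeal to ``two-sided bounds'' on $\f_t$ fails. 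In the smooth approximation the bound you obtain depends on $\inf_X\f_{0,j}$, hence blows up as $j\to\infty$, and the constant $C$ is not uniform.

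The paper closes this gap in two steps. First, using $\dot\theta_t\le A\theta_t$ and the elementary monotonicity $\int_X\f_t\,\omega_t^j\wedge\theta_t^{n-j}\le\int_X\f_t\,\omega_t^{j-1}\wedge\theta_t^{n-j+1}$, the correction term is bounded \emph{below} by $C_2E(\f_t)-C_1$ rather than by an absolute constant. Second, the lower bound $\f_t\ge\f_0-c(t)$ from Lemma~\ref{bound from below} together with monotonicity of $E$ yields $E(\f_t)\ge C_3E(\f_0)+C_3'$, which is finite precisely because $\f_0\in\mathcal E^1$. Combining these gives $\frac{d}{dt}E(\f_t)\ge -C$ with $C$ depending only on $E(\f_0)$ and the data of the flow. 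You should replace your ``bounded by a uniform constant'' claim with this two-step argument.
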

\begin{proof}
By computation we get
\begin{eqnarray*}
\frac{dE(\f_t)}{dt}=\frac{1}{V}\int_X\dot{\f_t}\omega_t +\frac{1}{(n+1)V}\sum_{j=0}^n\int_X \f_t\dot{\theta_t}\wedge[j\theta_t+(n-j)\omega_t]\wedge \omega_t^j\wedge \theta_t^{n-j-1}.
\end{eqnarray*}
For the first term, we use the concavity of the logarithm to get
$$\int_X\dot{\f_t}\omega_t^n=\int_X\log\left(\frac{\omega^n_t}{e^F\Omega}\right)\frac{\omega_t^n}{V_t}\geq -\log\left(\frac{\int_X e^{F(t,z,\f_t)}\Omega}{V_t} \right)\geq -\log (C_0\delta)$$
where $F(t,z,\f_t)\leq \log C_0$ and $$V_t:=\int_X\omega_t^n=\int_X\theta^n_t\geq \delta^{-1}V.$$

\medskip
For the second one, there is a constant $A>0$ such that $\dot{\theta_t}\leq A\theta_t$ for all $0\leq t\leq T$. We note that 
$$\int_X\f_t(\theta_t+dd^c\f_t)^j\wedge \theta_t^{n-j}\leq \int_X \f_t(\theta_t+dd^c\f_t)^{j-1}\wedge \theta_t^{n-j+1},$$
hence 
$$\frac{dE(\f_t)}{dt}\geq -C_1+C_2E(\f_t),$$
for some $C_1,C_2>0$. By Lemma \ref{bound from below} we have
$$E(\f_t)\geq C_3 E(\f_0)+C_3\geq C_4$$
Thus $t\mapsto E(\f_t)+Ct$ is increasing on $[0,T]$ for some $C>0$. 
\end{proof}

\begin{prop}
If $\f_0\in\mathcal{E}^1(X,\omega)$, then $\f_t$ converges to $\f_0$ in energy as $t\rightarrow 0$.
\end{prop}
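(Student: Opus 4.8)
The plan is to combine the $L^1$-convergence already established in Section \ref{conv L1} with the monotonicity of the Aubin--Yau energy from Proposition \ref{monotonicity of energy}, together with an upper semicontinuity statement for the energy functional along the flow. Recall that $\mathcal E^1$-convergence of a sequence $\psi_j \to \psi$ of $\omega$-psh functions means $\psi_j \to \psi$ in $L^1(X)$ together with $E(\psi_j)\to E(\psi)$; this is the characterization of energy convergence from \cite{BBGZ} that I would invoke. Since $\f_t\to\f_0$ in $L^1(X)$ as $t\searrow 0$ has already been proved, the only thing left is to show $E(\f_t)\to E(\f_0)$.

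First I would establish $\liminf_{t\to 0} E(\f_t) \geq E(\f_0)$. This follows from the lower bound in Lemma \ref{bound from below}: we have $\f_t \geq (1-2\beta t)\f_0 + \alpha t u + n(t\log t - t) + At$ with $u$ continuous (hence bounded) and $\f_0 \in \mathcal E^1$, so the right-hand side converges to $\f_0$ in $\mathcal E^1$ as $t\to 0$ by, e.g., the continuity of $E$ along such affine-type combinations, and then monotonicity of $E$ with respect to the partial order (if $v\le w$ then $E(v)\le E(w)$) gives $E(\f_t)\geq E\bigl((1-2\beta t)\f_0 + \alpha t u + n(t\log t-t) + At\bigr) \to E(\f_0)$.

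Next I would establish the reverse inequality $\limsup_{t\to 0} E(\f_t) \leq E(\f_0)$. Here is where Proposition \ref{monotonicity of energy} does the work: $t\mapsto E(\f_t) + Ct$ is increasing on $[0,T]$, so $E(\f_t) + Ct \leq E(\f_s) + Cs$ for $t\le s$; letting $t\to 0$ we get $\limsup_{t\to 0} E(\f_t) \leq E(\f_s) + Cs$ for every $s>0$. It then suffices to show $\liminf_{s\to 0}\bigl(E(\f_s)+Cs\bigr) \le E(\f_0)$, equivalently $\liminf_{s\to 0} E(\f_s)\le E(\f_0)$. For this I would use that $\f_s \leq \f_{0,j} + o(1)$ (from the $L^1$-convergence and Hartogs' lemma, as in the convergence-in-capacity argument, combined with Lemma \ref{bound 1}): more precisely $\f_s$ is asymptotically dominated by each member of the decreasing smooth approximating sequence $\f_{0,j}$, hence $\limsup_{s\to 0} E(\f_s)\leq E(\f_{0,j})$, and since $\f_{0,j}\searrow \f_0$ with $\f_0\in\mathcal E^1$ we have $E(\f_{0,j})\searrow E(\f_0)$ by the monotone continuity of $E$ along decreasing sequences. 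Combining the two one-sided bounds gives $\lim_{t\to 0}E(\f_t) = E(\f_0)$, and hence $\f_t\to\f_0$ in energy.

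The main obstacle I anticipate is bookkeeping the subtle interplay between the two approximations — the time parameter $t\to 0$ and the regularizing index $j\to\infty$ — and making sure the semicontinuity estimates for $E$ are applied to genuinely $\mathcal E^1$ objects (the lower bound in Lemma \ref{bound from below} is fine since $u$ is bounded, but one must check the affine combination stays in $\mathcal E^1$ and that $E$ behaves continuously there). One should also be slightly careful that the constant $C$ in Proposition \ref{monotonicity of energy} and the implicit constants in Lemma \ref{bound from below} do not degenerate; since all are uniform on $[0,T]$ this is not a real issue, but it is the place where a careless argument could slip. Everything else is a direct assembly of results already in hand.
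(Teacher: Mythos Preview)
Your argument is correct and reaches the same conclusion, but the route for the upper bound differs from the paper's. For $\limsup_{t\to 0}E(\f_t)\le E(\f_0)$ the paper invokes a single general fact: the upper semi-continuity of the Aubin--Yau energy for the $L^1$-topology (from \cite{GZ07}). Taking any $L^1$-cluster point $\psi$ of $(\f_t)$ (which lies in $\mathcal E^1$ by the compactness furnished by Proposition~\ref{monotonicity of energy}) and observing $\psi\le\f_0$ as in Section~\ref{conv L1}, one gets $\limsup E(\f_{t_k})\le E(\psi)\le E(\f_0)$ in one stroke; together with $E(\f_0)\le\liminf E(\f_{t_k})$ this forces $E(\psi)=E(\f_0)$, hence $\psi=\f_0$. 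You instead use Hartogs' lemma against the smooth decreasing approximants $\f_{0,j}$ to obtain $\limsup_s E(\f_s)\le E(\f_{0,j})\searrow E(\f_0)$, which is also valid and is essentially a hands-on verification of the upper semi-continuity in this particular situation.

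One remark: your detour through Proposition~\ref{monotonicity of energy} for the \emph{upper} bound is redundant. The inequality $\limsup_{t\to 0} E(\f_t)\le E(\f_s)+Cs$ for all $s>0$, after taking $\inf_s$, just collapses to $\limsup\le\liminf$, which is vacuous; it is your Hartogs step alone that carries the argument. In the paper, Proposition~\ref{monotonicity of energy} is used only to ensure compactness in $\mathcal E^1$ and to supply the lower bound $E(\f_0)\le\liminf E(\f_t)$. Overall the paper's version is shorter because it quotes a ready-made semicontinuity result, while yours is more self-contained but carries some unnecessary baggage.
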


\begin{proof}
It follows from Proposition \ref{monotonicity of energy} that $\f_t$ stays in a compact subset of the class $\mathcal{E}^1(X,\omega)$. Let $\psi=\lim_{t_k\rightarrow 0}\f_{t_k}$ be a cluster point of $(\f_t)$ as $t\rightarrow 0$. Reasoning as earlier, we have $\psi\leq \f_0$.
Since the energy $E(.)$ is upper semi-continuous for the weak $L^1$-topology (cf. \cite{GZ07}), Proposition \ref{monotonicity of energy} and the monotonicity of Aubin-Yau energy functional yield
$$E(\f_0)\leq \lim_{t_k\rightarrow 0}E(\f_{t_k})\leq E(\psi)\leq E(\f_0),$$
Therefore $E(\psi)=E(\f_0)$, so $\psi=\f_0$ and we have  $\f_t\rightarrow \f_0$ in energy.  
\end{proof}

\section{Uniqueness and stability of solution}\label{uniqueness and stability}
We now prove the uniqueness and stability for the complex Monge-Amp\`ere flow
$$
\hskip-2cm
(CMAF) \hskip2cm \frac{\partial \f_t}{\partial t}=\log  \frac{(\theta_t+dd^c \f_t)^n}{\Omega} -F(t,z,\f),
$$
where $ F(t,z,s)\in C^\infty([0,T]\times X\times \R,\R)$ with 
$$\frac{\partial F}{\partial s}\geq0\, \text{ and }\, \left|\frac{\partial F}{\partial t}\right|\leq C',$$
for some constant $C'>0$. 
\subsection{Uniqueness} 
For the uniqueness and stability of solution we follow the approach of Di Nezza-Lu \cite{DiNL14}. The author thanks Eleonora Di Nezza and  Hoang Chinh Lu for valuable discussion on the argument in \cite[Theorem 5.4]{DiNL14}.

\medskip
Suppose $\f_t$ is a solution of 
\begin{equation}\label{eq 1}
\left\{\aligned
&\dfrac{\partial \f}{\partial t}=\log\dfrac{(\theta_t+dd^c\f)^n}{\Omega}-F(t,z,\f),  \\ 
 \\
 &\f(0,.)=\f_0. 
\endaligned \right.  
\end{equation} 
Consider 
$$\phi(t,z)=e^{At}\f\left( (1-e^{-At})/A,z\right),$$
so $\phi_0=\varphi_0$.
Then 
\begin{equation}\label{eq 2}
\frac{\partial \phi_t}{\partial t}=\log  \frac{(\tilde{\theta}_t+dd^c \phi)^n}{\Omega}+A\phi_t -H(t,z,\phi_t),
\end{equation}
where $$\tilde{\theta}_t= e^{At}\theta_{\frac{1-e^{-At}}{A}},$$ and $$H(t,z,\phi)=Ant+F\big(A^{-1}(1-e^{-At}), z,e^{-At}\phi\big).$$
\medskip
Since 
$$\frac{\partial \tilde{\theta}_t}{\partial t}=Ae^{At} \theta_{\frac{1-e^{-At}}{A}}+ \dot{\theta}_{\frac{1-e^{-At}}{A}},$$
we can choose $A$ so large that $\tilde{\theta}_t$ is increasing in $t$.
Observe that the equation (\ref{eq 1}) has a unique solution if and only if the equation (\ref{eq 2}) has a unique solution.
\medskip
It follows from Lemma \ref{bound from below} that
$$\varphi\geq \varphi_0 -c(t),$$ where $c(t)\searrow 0$ as $t\searrow 0$, so for $\phi(t)$:
$$\phi\geq \phi_0-\alpha(t),$$ with $\alpha(t)\searrow 0$ as $t\searrow 0$.

\begin{thm}\label{uniqueness}
Suppose $\psi$ and $\f$ are two solutions of (\ref{eq 1}) with $\f_0\leq\psi_0$, then 
$\f_t\leq\psi_t.$ In particular, the equation (\ref{eq 1}) has a unique solution. 
\end{thm}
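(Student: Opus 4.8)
The plan is to prove that \emph{every} solution of $(\ref{eq 1})$ coincides with the particular solution constructed in Section \ref{proof}; uniqueness, and then the asserted comparison, follow immediately. I work throughout with the transformed equation $(\ref{eq 2})$, for which the family $\tilde\theta_t$ may be taken non-decreasing in $t$; the change of variables before the statement is a bijection on solutions, and Propositions \ref{comparison} and \ref{weak comparison} (whose proof only uses Proposition \ref{comparison} and Hartogs' lemma) apply to $(\ref{eq 2})$ verbatim, the linear term $A\phi$ merely contributing an extra factor $e^{AT}$ in the comparison and forcing one auxiliary correction below to be modulated by $e^{At}$. Fix a solution $\psi$ of $(\ref{eq 2})$ with rough initial datum $\psi_0$, and let $\Psi$ be the solution with the same datum produced in Section \ref{proof}, i.e. $\Psi_t=\lim_j\psi^{(j)}_t$, where $\psi^{(j)}$ is the smooth solution with datum $\psi_{0,j}$ and $(\psi_{0,j})$ is a decreasing sequence of smooth strictly $\omega$-psh functions with $\psi_{0,j}\searrow\psi_0$; recall from Theorem \ref{full estimates} that the relevant a priori estimates are uniform in $j$, so $\Psi$ is smooth on $(0,T]\times X$ and $\Psi_t\to\psi_0$ in $L^1(X)$.

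\textbf{Upper bound.} Since $\psi$ is in particular a subsolution of $(\ref{eq 2})$ with $\psi_0\le\psi_{0,j}$ and $\psi_t\to\psi_0$ in $L^1(X)$, Proposition \ref{weak comparison} gives $\psi_t\le\psi^{(j)}_t$ for every $j$; letting $j\to\infty$ yields $\psi_t\le\Psi_t$ on $[0,T]\times X$. The same reasoning shows that \emph{any} subsolution with datum $\le\psi_0$ lies below $\Psi$, a fact I reuse at the end.

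\textbf{Lower bound.} This is the heart of the matter. For $s\in(0,T)$ the function $\psi_{t+s}$ is a smooth solution on $[0,T-s]\times X$, with \emph{smooth} initial datum $\psi_s$, of the flow obtained from $(\ref{eq 2})$ by replacing $(\tilde\theta_t,H(t,\cdot,\cdot))$ with $(\tilde\theta_{t+s},H(t+s,\cdot,\cdot))$; by Lemma \ref{bound from below} one has $\psi_s\ge\psi_0-\alpha(s)$ with $\alpha(s)\searrow0$. I take this shifted flow as the common reference flow — choosing the \emph{more advanced} parameter $t+s$ is what keeps $\tilde\theta_{t+s}+dd^c(\cdot)\ge0$ for both competitors. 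Indeed $\Psi$ solves the $\tilde\theta_t$-flow, so $\tilde\theta_{t+s}+dd^c\Psi_t\ge\tilde\theta_t+dd^c\Psi_t\ge0$, and by monotonicity of $\log\det$ together with the bound $|\partial_tH|\le C''$, the function $\Psi_t-\gamma(t)$ is a subsolution of the shifted flow, where $\gamma$ solves $\dot\gamma-A\gamma\ge C''s$, $\gamma(0)=0$ (so $\gamma=O(s)$ uniformly on $[0,T]$). Symmetrically, $\psi_{t+s}+\alpha(s)e^{At}$ is a supersolution of the shifted flow — the factor $e^{At}$ is exactly what lets the monotone term $-H$ absorb the $A\phi$ contribution of the added correction — with smooth initial datum $\psi_s+\alpha(s)\ge\psi_0=\Psi_0-\gamma(0)$. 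Applying the analogue of Proposition \ref{weak comparison} (a subsolution with $L^1$-convergent rough datum is dominated by a smooth supersolution with larger datum; proved by running Proposition \ref{comparison} on $[\e,T-s]\times X$ and letting $\e\to0$ via Hartogs) gives $\Psi_t-\gamma(t)\le\psi_{t+s}+\alpha(s)e^{At}$. Letting $s\searrow0$ and using smoothness of $\psi$ on $(0,T]\times X$ yields $\Psi_t\le\psi_t$.

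\textbf{Conclusion and main obstacle.} The two bounds give $\psi_t=\Psi_t$, so $(\ref{eq 2})$, hence $(\ref{eq 1})$, has a unique solution. If now $\varphi_0\le\psi_0$, then the solution $\varphi$ with datum $\varphi_0$ is a subsolution with datum $\le\psi_0$, so by the remark after the upper bound $\varphi_t\le\Psi_t=\psi_t$, which is the statement. The delicate point is the lower-bound step: one must organize the two auxiliary corrections — additive $-\gamma(t)$ for $\Psi$ and multiplicative $e^{At}$ for $\psi_{t+s}$ — so that the sub/supersolution inequalities genuinely survive both the time-shift of $H$ and the linear term $A\phi$, and one must select the reference flow ($\tilde\theta_{t+s}$ rather than $\tilde\theta_t$) so that $\tilde\theta_{t+s}+dd^c(\cdot)$ stays non-negative for both competitors; once this is arranged, the $j\to\infty$ limit of the approximants, the uniformity in $j$ of the estimates of Section \ref{a priori estm}, and the final passage $s\to0$ are routine.
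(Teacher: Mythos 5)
Your strategy coincides with the paper's at every essential point: transform so that $\tilde\theta_t$ is non-decreasing, time-shift one solution by $\epsilon>0$ to obtain a smooth initial datum, add a correction built from Lemma~\ref{bound from below} to restore the ordering of the initial data, apply the weak comparison principle (Proposition~\ref{weak comparison}, i.e.\ Proposition~\ref{comparison} on $[\epsilon',T]$ followed by Hartogs as $\epsilon'\to0$), and finally send $\epsilon\to0$. The extra layer you insert --- first identifying the arbitrary solution $\psi$ with the canonical solution $\Psi$ built from decreasing smooth approximants, and only afterwards deducing the comparison --- is a reorganization rather than a new idea; the paper simply compares the transformed $\varphi$ directly against the shifted, corrected $\psi$ and reads off both the comparison and uniqueness at once.

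There is, however, one step in your lower bound that is not justified and is genuinely delicate: the assertion $|\partial_tH|\le C''$. Since
\[
H(t,z,s)=Ant+F\bigl(A^{-1}(1-e^{-At}),z,e^{-At}s\bigr),\qquad
\partial_tH=An+e^{-At}\partial_tF-Ae^{-At}\,s\,\partial_sF,
\]
the term $s\,\partial_sF$ is \emph{not} uniformly bounded along the range of $\Psi_t$: when $\psi_0$ is unbounded below, $\inf_X\Psi_t\to-\infty$ as $t\to0$, so the constant $C''$ blows up precisely in the regime where the Hartogs limit $\epsilon'\to0$ is taken, and the ODE $\dot\gamma-A\gamma\ge C''s$ no longer produces a correction $\gamma=O(s)$ uniformly. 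The paper circumvents this by never differentiating $H$ in $t$ along the solution: it freezes the third argument of $F$ at $e^{-At}\tilde v_t$, uses monotonicity of $F$ in $s$ to compare $e^{-A(t+\epsilon)}v_{t+\epsilon}$ with $e^{-At}\tilde v_t$, and then exploits the hypothesis $|\partial_tF|<C'<A$ to show that $\sigma\mapsto-A(t+\sigma)+F\bigl(A^{-1}(1-e^{-A(t+\sigma)}),z,\cdot\bigr)$ is \emph{decreasing}, so the shift in the first argument of $F$ costs at most an $O(\epsilon)$ term absorbed by the extra correction $n\epsilon(e^{At}-1)$. If you replace your blanket bound on $\partial_tH$ by this two-part decomposition (monotonicity in $s$ plus the $A>C'$ monotonicity in the shift parameter), your lower-bound step matches the paper's and the remainder of your argument goes through.
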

\begin{proof}
Thanks to the previous remark, it is sufficient to prove $u\leq v$, where $ u(t,z)=e^{At}\f\left( (1-e^{-At})/A,z\right),
$ and $v(t,z)=e^{At}\psi\left( (1-e^{-At})/A,z\right).$

\medskip
Fix $\e\in (0,T)$, define $$\tilde{v}(t,z)=v_{t+\e}+ \alpha(\e)e^{At} + n\e(e^{At}-1).$$
then $\tilde{v}_0\geq v_0=\psi_0$ and $\tilde{v}\geq v_{t+\e}$. Since we choose $A$ so large that $\tilde{\theta}_t$ is increasing, 
\begin{align*}
\frac{\partial\tilde{v}}{\partial t}&=\log\frac{(\tilde{\theta}_{t+\e}+dd^c v_{t+\e})^n}{\Omega} +A\tilde{v}_t -H(t,z,v_{t+s}) \\
&\geq \log\frac{(\tilde{\theta}_{t}+dd^c \tilde{v}_t)^n}{\Omega} +A\tilde{v}_t -H(t,z,v_{t+s})
\end{align*}
Where
\begin{eqnarray*}
H(t,z,v_{t+\e})= 2Ant-An(t+\e)+ F\bigg(A^{-1}(1-e^{-A(t+\e)})),z,e^{-A(t+\e)}v_{t+\e}\bigg).
\end{eqnarray*}
It follows from the monotonicity of $F$ in the third variable that
$$
F\bigg(A^{-1}(1-e^{-A(t+\e)}),z,e^{-A(t+\e)}v_{t+\varepsilon}\bigg)\leq F\bigg(A^{-1}(1-e^{-A(t+\varepsilon)}),z,e^{-At}\tilde{v}_t\bigg).$$
By the assumption $|\frac{\partial F}{\partial t}|<C'$, we choose $A>C'$ and get $$s\mapsto- A(t+s)+F\bigg(A^{-1}(1-e^{-A(t+s)}),z,e^{-At}\tilde{v}_t\bigg)$$ is decreasing. Thus 
$$H(t,z,v_{t+\e})\leq Ant+F\bigg(A^{-1}(1-e^{-At}),z,e^{-At}\tilde{v}_t) \bigg),$$
and
$$\frac{\partial\tilde{v}}{\partial t}\geq \log\frac{(\tilde{\theta}_{t}+dd^c \tilde{v}_t)^n}{\Omega} +A\tilde{v}_t -H(t,z,\tilde{v}_t).$$
Therefore $\tilde{v}$ is the supersolution of (\ref{eq 2}). It follows from Proposition \ref{weak comparison} that $u_t\leq \tilde{v}_t$, $\forall t\in [0,T]$. Letting $\e\rightarrow 0$, we get $u_t\leq v_t$, so $\f_t\leq \psi_t$. 
\end{proof}
\begin{rmk}
For  $\theta_t(x)=\omega(x), \Omega=\omega^n, F(t,z,s)=-2|s|^{1/2}$ and $\f_0=0$, we obtain two distinct solutions to $(CMAF)$, $\f_t(z)\equiv 0$ and $\f_t(z)=t^2$. Here $\frac{\partial F}{\partial s}$ is negative and $F$ is not smooth along $(s=0)$.
\end{rmk}
We now prove the following qualitative stability result:
\begin{thm}\label{qualitative stab}
Fix $\varepsilon>0$. Let $\varphi_{0,j}$ be a sequence of $\omega$-psh functions with zero Lelong number at all points, such that $\f_{0,j}\rightarrow \f_0$ in $L^1(X)$. Denote by $\varphi_{t,j}$ and $\f_j$ the solutions of (\ref{eq 1}) with the initial condition $\f_{0,j}$ and $\f_0$ respectively. Then
$$\f_{t,j}\rightarrow \f_{t}\  \text{ in }\  C^\infty ([\e, T]\times X)\ \text{ as }\  j\rightarrow +\infty.$$
\end{thm}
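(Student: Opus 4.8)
The plan is to reduce the stability statement to the uniqueness statement combined with the a priori estimates already established, exploiting the monotone approximation that defines the solution $\f$. First I would recall that the solution $\f_t$ with initial data $\f_0$ is constructed in Section~\ref{conv L1} as a decreasing limit $\f_t=\lim_{k\to\infty}\f_{t,(k)}$ of solutions with smooth strictly $\omega$-psh initial data $\f_{0,(k)}\searrow\f_0$, and that the same applies to each $\f_j$ with data $\f_{0,j}$. The key point is that Theorem~\ref{full estimates} provides, for each $\e>0$ and $k\in\N$, a bound $\|\f_j\|_{\mathcal C^k([\e,T]\times X)}\le C_k(\e)$ that depends only on $\sup_X\f_{0,j}$, on $\e$, and on $Osc_X\f_{0,j/2}$ (through Lemmas~\ref{bound 1}, \ref{bound from below}, \ref{bound delta} and the Kolodziej estimate). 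Since $\f_{0,j}\to\f_0$ in $L^1(X)$, Hartogs' lemma gives $\sup_X\f_{0,j}\to\sup_X\f_0$, and the oscillations $Osc_X\f_{0,j}$ are uniformly bounded for $j$ large (again via Hartogs and the uniform integrability coming from zero Lelong numbers / Skoda). Hence the constants $C_k(\e)$ can be taken \emph{uniform in $j$}, so the family $(\f_j)_{j}$ is bounded in $\mathcal C^k([\e,T]\times X)$ for every $k$.

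Next I would extract convergence. By Arzel\`a--Ascoli and a diagonal argument over $k$ and over a sequence $\e\searrow 0$, any subsequence of $(\f_j)$ has a further subsequence converging in $C^\infty_{loc}((0,T]\times X)$ to some limit $\Phi$, which then satisfies $(CMAF)$ in $(0,T]\times X$ and is smooth and strictly $\theta_t$-psh there. The remaining task is to identify $\Phi$ with $\f$, for which the uniqueness Theorem~\ref{uniqueness} is the natural tool, provided I can show $\Phi_t\to\f_0$ in $L^1(X)$ as $t\searrow 0$. For the upper bound I would use the comparison/weak comparison principle: given a smooth $\omega$-psh $\psi\ge\f_0$, one has $\f_{0,j}\le\psi$ for $j$ large after adding a small constant (Hartogs), hence $\f_j\le\psi_t+o(1)$ by Proposition~\ref{comparison}, so $\limsup_{t\to0}\Phi_t\le\psi$ and then $\le\f_0$ by taking the decreasing regularization $\psi=\f_{0,m}\searrow\f_0$. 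For the lower bound I would pass Lemma~\ref{bound from below} to the limit: $\f_j(z)\ge(1-2\beta t)\f_{0,j}(z)+\alpha t\,u_j(z)+n(t\log t-t)+At$; since $\f_{0,j}\to\f_0$ in $L^1$ and the auxiliary Kolodziej solutions $u_j$ stay in a compact family (their equation has right-hand side $e^{\alpha u_j-2\beta\f_{0,j}}\omega^n$ with uniformly $L^p$-bounded, $p>1$, densities), one gets $\liminf_{t\to0}\Phi_t\ge\f_0$ in $L^1$. Combining, $\Phi_t\to\f_0$ in $L^1$, so by Theorem~\ref{uniqueness} $\Phi=\f$, and since the limit is independent of the subsequence, the full sequence $\f_j\to\f$ in $C^\infty([\e,T]\times X)$ for each $\e>0$.

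The main obstacle I expect is the uniformity of the a priori estimates in $j$. One must verify carefully that every constant appearing in Lemmas~\ref{bound 1}--\ref{bound delta} and in the Kolodziej $L^\infty$-estimate depends on the initial data only through $\sup_X\f_{0,j}$ and $Osc_X\f_{0,j}$ (and through a \emph{uniform} lower bound on the $L^p$-norm exponent $p>1$ and on $\|e^{-2\beta\f_{0,j}}\|_{L^p}$), and that these quantities are controlled along the sequence. The delicate ingredient is the uniform Skoda-type integrability $\sup_j\int_X e^{-2\beta\f_{0,j}}\,\omega^n<\infty$: this does not follow from $L^1$-convergence alone for general $\omega$-psh functions, but it does hold because $\f_{0,j}$ may be taken as the Demailly regularizations of $\f_0$, whose Lelong numbers decrease to those of $\f_0$, hence vanish, uniformly enough to invoke the uniform version of Skoda's theorem; alternatively one replaces $\f_{0,j}$ by $\max(\f_{0,j},\f_0^{(m)})$ where $\f_0^{(m)}$ is a fixed regularization of $\f_0$, which does not affect the $L^1$-limit and forces the required integrability. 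Once this uniformity is secured, the rest is a routine compactness-plus-uniqueness argument.
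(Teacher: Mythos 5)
Your overall strategy --- uniform-in-$j$ a priori estimates, Arzel\`a--Ascoli compactness, then identifying the subsequential limit with $\f_t$ via uniqueness --- is exactly the paper's. The difference is in how the limit is identified. You show directly that any subsequential limit $\Phi$ satisfies $\Phi_t\to\f_0$ in $L^1$ (upper bound by Hartogs plus the weak comparison principle, lower bound by passing Lemma~\ref{bound from below} to the limit), and then invoke Theorem~\ref{uniqueness} once. The paper instead splits into two inequalities: $\phi_t\ge\f_t$ from the lower bound and Theorem~\ref{uniqueness}, and $\phi_t\le\f_t$ via the envelope $\psi_{0,k}=\bigl(\sup_{j\ge k}\f_{0,j}\bigr)^*\searrow\f_0$, whose solutions $\psi_{t,k}$ dominate each $\f_{t,j}$ with $j\ge k$ and decrease to $\f_t$ by the construction in Sections~\ref{a priori estm}--\ref{proof}. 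Your route avoids the envelope device and is arguably more transparent; both are essentially Hartogs plus comparison.

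The one substantive concern you raised is real, but your proposed fixes do not work. You correctly identify the uniform Skoda integrability $\sup_j\|e^{-2\beta\f_{0,j}}\|_{L^p}<\infty$ as the crux of making the Kolodziej auxiliary functions $u_j$ (hence the lower bound of Lemma~\ref{bound from below} and the constants $C,C'$ in the $H_{t,j}$ bound) uniform in $j$; the paper glosses over this, even slipping into ``Since $\f_{0,j}$ decreases to $\f_0$\ldots'' although the hypothesis is only $L^1$ convergence. However, your first remedy fails because the $\f_{0,j}$ are given arbitrarily and cannot be \emph{chosen} to be Demailly regularizations of $\f_0$; and your second remedy fails because replacing $\f_{0,j}$ by $\max(\f_{0,j},\f_0^{(m)})$ \emph{does} change the $L^1$-limit (to $\f_0^{(m)}$, not $\f_0$) and changes the Cauchy data, hence the solutions whose convergence we are trying to establish. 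The correct input is the uniform Skoda theorem for compact families: since $\{\f_{0,j}\}$ is relatively compact in $L^1$ (it converges), $\sup_X\f_{0,j}$ is bounded by Hartogs, and all members as well as the limit $\f_0$ have zero Lelong numbers, one has $\sup_j\int_X e^{-2\beta\f_{0,j}}\,\omega^n<\infty$ for every $\beta>0$. Replace your two workarounds by this; the rest of your argument then goes through.
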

\begin{proof}
Observe that we can use previous techniques in Section \ref{a priori estm} to obtain estimates of $\f_{t,j}$ in $C^k([\e,T]\times X)$ for all $k\geq 0$. In particular, for the $C^0$ estimate, we need to have the uniform bound for 
$H_{t,j}=\exp(\dot{\f}_{t,j}+F)$ in order to use the uniform version of Kolodziej's estimates \cite{Kol98, EGZ08}. By Lemma \ref{bound f' above} we have
$$H_{t,j}=\exp(\dot{\f}_{t,j}+F)\leq \exp\bigg(\frac{-\phi_\e+C}{t}+C'\bigg),$$
where $C,C'$ depend on $\e, \sup_X {\f_{0,j}}$. Since $\f_{0,j}$ decreases to $\f_0$, we have the $\sup_X\f_{0,j}$ is uniformly bounded in term of $\sup_X\f_0$ for all $j$, so we can choose $C,C'$ to be independent of $j$. Hence there is a constant $A(t,\e)$ depending on $t$ and $\e$ such that $||H_{t,j}||_{L^2(X)}$ is uniformly bounded by $A(t,\e)$ for all $t\in [\e,T]$.
\medskip

By the Arzela-Ascoli theorem we can extract a subsequence $\f_{j_k}$ that converges to $\phi_t$ in $C^\infty([\e,T]\times X)$. Note that 
$$\dfrac{\partial \phi_t}{\partial t}=\log\dfrac{(\theta_t+dd^c\phi_t)^n}{\Omega}-F(t,z,\phi_t).$$
We now prove $\phi_t=\f_t$. From Lemma \ref{bound from below} we  get 
$$\f_{t,j_k}\geq (1-\beta t)\f_{0,j_k}-C(t),$$
where $C(t)\searrow 0$ as $t\rightarrow 0$.  
Let $j_k\rightarrow +\infty$ we get $\phi_t\geq (1-\beta t)\f_0-C(t)$, hence
$$\liminf_{t\rightarrow 0} \phi_t\geq \f_0.$$
It follows from Theorem \ref{uniqueness} that $\phi_t\geq \f_t$. For proving $\phi_t\leq \f_t$, we consider $\psi_{0,k}=\left(\sup_{j\geq k}\f_{0,j} \right)^*$, hence  $\psi_{0,k}\searrow \f_0$ by Hartogs theorem. Denote by $\psi_{t,k}$ the solution of (\ref{eq 1}) with initial condition $\psi_{0,j}$. It follows from Theorem $\ref{uniqueness}$ that $$ \psi_{t,j} \geq \f_{t,j}.$$    
Moreover, thanks to the same arguments for proving the existence of a solution in Sections 2 and 3 by using a decreasing approximation of $\f_0$, we have that $\psi_{t,j}$ decreases to $\f_t$. Thus we infer that $\phi_t\leq \f_t$ and the proof is complete.  
\end{proof}
\subsection{Quantitative stability estimate}
In this section, we prove the following stability result when the initial condition is continuous. 
\begin{thm}\label{Quantitative stab}
If $\f,\psi\in C^\infty((0,T]\times X)$  are solutions of $(CMAF)$ with continuous initial data $\f_0$ and $\psi_0$, then 
\begin{equation}\label{stability ineq}
||\f-\psi||_{C^{k}([\e,T]\times X)}\leq C(k,\e)||\f_0-\psi_0||_{L^\infty(X)
}.
\end{equation} 
\end{thm}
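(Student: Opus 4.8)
The plan is to reduce the statement to an estimate for the difference $w:=\f-\psi$, which solves a \emph{linear} parabolic equation, and then to combine the a priori estimates of Section~\ref{a priori estm} with the parabolic Schauder theory recalled above. First note that, under the standing hypotheses of this section, Theorem~\ref{uniqueness} applies, so $\f$ and $\psi$ agree with the solutions produced by Theorem~\ref{main theorem}; in particular Theorem~\ref{full estimates} applies to them, so on $[\e/2,T]\times X$ they are bounded in every $C^m$ norm, and being solutions with continuous data they are continuous up to $t=0$.

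\textbf{The $C^0$ bound.} Since $\frac{\partial F}{\partial s}\geq 0$, Proposition~\ref{comparison} holds with $\lambda=0$. Applying it on $[\e',T]\times X$ to the ordered pairs $(\f,\psi)$ and $(\psi,\f)$ and letting $\e'\searrow 0$ — legitimate because $\f,\psi\in C^0([0,T]\times X)$, exactly as in the proof of Proposition~\ref{weak comparison} — one obtains
$$\|\f-\psi\|_{C^0([0,T]\times X)}\ \leq\ \|\f_0-\psi_0\|_{L^\infty(X)}.$$
This is the only step in which the right-hand side of \eqref{stability ineq} enters; everything below bounds higher order norms of $w$ in terms of its $C^0$ norm.

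\textbf{The linearized flow and the Schauder iteration.} Subtracting the two copies of $(CMAF)$ and using the identity $\log\det A-\log\det B=\int_0^1\tr\!\big((B+s(A-B))^{-1}(A-B)\big)\,ds$ for the complex Hessians, one finds that $w$ satisfies
$$\Big(\frac{\partial}{\partial t}-\Delta_{\tilde\omega_t}+c\Big)\,w\ =\ 0,\qquad \tilde\omega_t^{-1}:=\int_0^1\big(\theta_t+dd^c(\psi+sw)\big)^{-1}\,ds,$$
where $\Delta_{\tilde\omega_t}w=\tilde g^{j\bar k}w_{j\bar k}$ and $c(t,z):=\int_0^1\frac{\partial F}{\partial s}(t,z,\psi+sw)\,ds\geq 0$. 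By Theorem~\ref{full estimates}, on $[\e/2,T]$ and in every coordinate chart $\tilde\omega_t$ is a Kähler metric with $C^{-1}\omega_U\leq\tilde\omega_t\leq C\omega_U$ and $\|\tilde\omega_t\|_{C^{m,\alpha}}+\|c\|_{C^{m,\alpha}}\leq C_m$. Covering $X$ by finitely many pairs of charts $U'\Subset U$, Lemma~\ref{second order schauder} with vanishing right-hand side gives $\|w\|_{C^{2,\alpha}([\e,T]\times X)}\leq A\|w\|_{C^0([\e/2,T]\times X)}$. For higher order, differentiate the equation as in the proof of Theorem~\ref{Evan-Krylov 2}: for a first order constant-coefficient operator $D$ one has $(\partial_t-\Delta_{\tilde\omega_t}+c)Dw=(D\tilde g^{j\bar k})w_{j\bar k}-(Dc)w$, whose parabolic $C^\alpha$-norm is $\leq C(\e)\|w\|_{C^{2,\alpha}}$ because $D\tilde g^{j\bar k}$ and $Dc$ are a priori bounded, and iterating $D$ produces at each stage a linear parabolic equation for $D^mw$ with principal part $\Delta_{\tilde\omega_t}$, zeroth order coefficient $c$, and an inhomogeneous term which is a universal polynomial in the derivatives of $w$ of order $<m$ with coefficients built from the a priori bounded derivatives of $\f,\psi$ and $F$. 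Feeding this into Lemma~\ref{second order schauder} finitely many times, shrinking the time interval from $[\e/2,T]$ down to $[\e,T]$, yields $\|w\|_{C^k([\e,T]\times X)}\leq C(k,\e)\|w\|_{C^0([\e/2,T]\times X)}$, and combining with the $C^0$ bound gives \eqref{stability ineq}, the constant $C(k,\e)$ depending also, as in Theorem~\ref{full estimates}, on $F$, on $(\theta_t)$ and on $\sup_X\f_0,\sup_X\psi_0$.

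\textbf{Main obstacle.} There is no genuinely new difficulty: the substantive input is Theorem~\ref{full estimates}, which is precisely what makes the coefficients of the linearized flow and all their derivatives controllable on $[\e/2,T]$, after which the argument is the classical Schauder bootstrap for linear parabolic equations. The only points requiring care are the bookkeeping of the inhomogeneous terms generated by repeated differentiation and the routine passage from coordinate charts to the compact manifold $X$.
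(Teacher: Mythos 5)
Your argument is correct, but it is a genuinely different route from the paper's. The paper first reduces to \emph{smooth} initial data by Demailly approximation and Theorem~\ref{qualitative stab}, then introduces the linear interpolation $\f^\lambda$ of $(CMAF)$ with initial data $(1-\lambda)\f_0+\lambda\psi_0$, differentiates in $\lambda$ to get a homogeneous linear parabolic equation for $u^\lambda=\partial_\lambda\f^\lambda$ whose second-order part is the honest Laplacian $\Delta_t^\lambda$ of the K\"ahler metric $\omega_t^\lambda$, applies Schauder and a maximum principle to $u^\lambda$, and integrates over $\lambda\in[0,1]$. You instead subtract the two equations directly: via the integral formula for $\log\det A-\log\det B$ the difference $w=\f-\psi$ solves a homogeneous linear parabolic equation whose second-order coefficients are the averaged inverses $\int_0^1\big(\theta_t+dd^c(\psi+sw)\big)^{j\bar k}\,ds$. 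This is cleaner in two respects: you never need to reduce to smooth initial data (the interpolation step that forced the paper's Step~1 disappears entirely), and the $C^0$ bound follows at once from Proposition~\ref{comparison} with $\lambda=0$ plus the $\e'\searrow 0$ device of Proposition~\ref{weak comparison}, rather than from a separate maximum-principle argument for $u^\lambda$. What the paper's parametrization buys is that the coefficients are literally $\Delta_t^\lambda$ for a genuine K\"ahler metric $\omega_t^\lambda$ controlled by Theorem~\ref{full estimates}, so Lemma~\ref{second order schauder} applies verbatim.

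One small caveat worth flagging: you call $\tilde\omega_t$ ``a K\"ahler metric,'' but it is not --- the matrix whose inverse is $\int_0^1\big(\theta_t+dd^c(\psi+sw)\big)^{-1}ds$ is Hermitian positive and has controlled $C^{m,\alpha}$ norms by Theorem~\ref{full estimates}, but the associated $(1,1)$-form is in general not closed. This does not affect the conclusion, since the parabolic Schauder theory cited in Lemma~\ref{second order schauder} (Krylov, Lieberman) only requires uniform parabolicity and H\"older-continuous coefficients, not closedness; but if you apply Lemma~\ref{second order schauder} \emph{as stated} you should add a sentence noting that its proof uses no closedness and extends to arbitrary uniformly elliptic second-order principal parts.
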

\begin{proof}
{\bf Step 1.} It follows from Demailly's approximation result (cf. \cite{Dem92}) that there exist two sequences $\{\varphi_{0,j}\}, \{\psi_{0,j}\}\subset PSH(X,\omega)\cap C^\infty(X)$ such that 
$$\lim_{j\rightarrow \infty}||\varphi_{0,j}-\varphi_0 ||_{L^\infty(X)}=0\quad \text{and}\quad \lim_{j\rightarrow \infty}||\psi_{0,j}-\psi_0 ||_{L^\infty(X)}=0.$$
Denote by $\varphi_{t,j},\psi_{t,j}$ solution of $(CMAF)$ corresponding to initial data $\varphi_{0,j}, \psi_{0,j}$. Moreover, thanks to Theorem \ref{qualitative stab} we obtain
$$\lim_{j\rightarrow\infty}||\varphi_{j,k}-\varphi_t||_{C^k([\e,T]\times X)}=0 \quad \text{and}\quad \lim_{j\rightarrow\infty}||\psi_{j,k}-\psi||_{C^k([\e,T]\times X)}=0.$$
Thus it is sufficient to prove (\ref{stability ineq}) with smooth functions $\varphi_0, \psi_0$.\\
{\bf Step 2.} We now assume that $\varphi_0$ and $\psi_0$ are smooth. For each $\lambda\in [0,1]$, there is a unique solution $\f^\lambda_t\in C^\infty((0, T]\times X)$ for  the complex Monge-Amp\`ere flow
\begin{equation}\label{eq 4}
\left\{\aligned
 &\dfrac{\partial \f^\lambda}{\partial t}=\log\dfrac{(\theta_t+dd^c\f^\lambda)^n}{\Omega}-F(t,z,\f^\lambda),  \\ 
 \\
 &\f^\lambda(0,.)=(1-\lambda)\f_0+\lambda\psi_0.
\endaligned\right.  
\end{equation}
By the local existence theorem, $\f^\lambda$ depends smoothly on the parameter $\lambda$. We denote by $\Delta_t^\lambda$ the Laplacian with respect to the K\"ahler form $$\omega^\lambda:=\theta_t+dd^c\f^\lambda.$$
Observe that
$$\left(\frac{\partial}{\partial t}-\Delta_t^\lambda\right) \frac{\partial \f^\lambda}{\partial \lambda}=-\frac{\partial F}{\partial s}\frac{\partial \f^\lambda}{\partial \lambda} ,$$
so
\begin{equation}\label{eq 5}
\left(\frac{\partial}{\partial t}-\Delta_t^\lambda\right)u^\lambda_t + g_\lambda(t,z)u^\lambda_t=0,
\end{equation}
where $u^\lambda_t=\frac{\partial \f^\lambda}{\partial \lambda}$ and $g_\lambda(t,z)=\frac{\partial F}{\partial s}(t,z,\f^\lambda)\geq 0$ . Moreover
 $$\psi_t-\f_t=\int^1_0 u^\lambda d\lambda,$$
thus it is sufficient to show that $$||u^\lambda_t||_{C^k([\e,T]\times X)}\leq C(k,\e)|| u^\lambda_0||_{L^\infty(X)}=C(k,\e)||\psi_0-\f_0||_{L^\infty}.$$
{\bf Step 3.}  It follows from Theorem \ref{full estimates} that for each $k\geq 0$,
$$\|g_\lambda\|_{C^k([\e, T]\times X)}\leq C_1(k,\e)\quad \text{and}\quad ||\omega^\lambda_t||_{C^k ([\e,T]\times X)}\leq C_2(k,\e),$$ for all $\lambda\in [0,1]$. Using the parabolic Schauder estimates \cite[Theorem 8.12.1]{Kry} for the equation $(\ref{eq 5})$ we get
$$||u^\lambda_t||_{C^k([\e,T]\times X)}\leq C(k,\e)|| u^\lambda_t||_{L^{\infty}(X)}.$$
{\bf Step 4.} Proving 
$$|| u^\lambda_t||_{L^\infty(X)}\leq || u^\lambda_0||_{L^\infty(X)}.$$
Indeed, suppose that $u^\lambda$ attains its maximum at $(t_0,z_0)$. If $t_0=0$, we obtain the desired inequality. Otherwise,  by the maximum principle, at $(t_0,z_0)$
$$0\leq \left(\frac{\partial}{\partial t}-\Delta_{t}^\lambda\right)u^\lambda_{t} =- g_\lambda(t_0,z_0)u^\lambda_{t_0}.$$
Since $g_\lambda\geq 0$, we get 
$$u_t^\lambda\leq \max\bigg\{0,\max_X u_0^\lambda\bigg\}.$$
Similarly, we obtain $$u_t^\lambda\geq \min\bigg\{0,\min_X u_0^\lambda\bigg\},$$
hence $$|| u^\lambda_t||_{L^\infty(X)}\leq || u^\lambda_0||_{L^\infty(X)}.$$
Finally,
\begin{equation*}
||\f-\psi||_{C^{k}([\e,T]\times X)}\leq \int_0^1 ||u^\lambda_t||_{C^k([\e,T]\times X)} d\lambda \leq C(k,\e) ||\f_0-\psi_0||_{L^\infty(X)
}.
\end{equation*}
The proof of Theorem B is therefore complete.\end{proof}

\section{Starting from a nef class}\label{nef}
Let $(X,\omega)$ be a compact K\"ahler manifold. In \cite{GZ13}, the authors  proved that the twisted K\"ahler-Ricci flow can smooth out a positive current $T_0$ with zero Lelong numbers belonging to a nef class $\alpha_0$. At the level of potentials it satisfies the  Monge-Amp\`ere flow
 \begin{equation}
\dfrac{\partial \f_t}{\partial t}=\log\dfrac{(\theta_0+t\omega+dd^c\f_t)^n}{\omega^n},   
\end{equation}
where $\theta_0$ is a smooth differential closed $(1,1)$-form representing a nef class $\alpha_0$ and $\f_0\in PSH(X,\theta_0)$ is a $\theta_0$-psh potential for $T_0$, i.e. $T_0=\theta_0+dd^c\f_0$. We prove here this is still true for more general flows we have considered:
\begin{thm}
Let $\theta_0$ be a smooth closed $(1,1)$-form representiong a nef class $\alpha_0$ and $\f_0$ be a $\theta_0$-psh fucntion with zero Lelong number at all points. Set $\theta_t:=\theta_0+t\omega$. Then there exists a unique family $(\f_t)_{t\in (0,T]}$ of smooth $(\theta_t)$-psh functions satisfying
\begin{equation}\label{starting from nef class}
\dfrac{\partial \f_t}{\partial t}=\log\dfrac{(\theta_t+dd^c\f_t)^n}{\Omega}-F(t,z,\f_t),   
\end{equation}
such that $\f_t$ converges to $\f_0$ in $L^1$.
\end{thm}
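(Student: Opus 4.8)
The plan is to run the approximation scheme behind Theorem \ref{main theorem}, with one extra regularization that neutralizes the degeneracy of the family $\theta_t=\theta_0+t\omega$ at $t=0$. First I reduce to the case $\frac{\partial F}{\partial s}\ge 0$ as in Section \ref{reduction}. By Demailly's regularization theorem \cite{Dem92,BK07} I fix $\varphi_{0,j}\in PSH(X,\theta_0+\e_j\omega)\cap C^\infty(X)$ decreasing to $\varphi_0$, with $\e_j\searrow 0$; set $\delta_j:=2\e_j$ and $\tilde\theta^j_t:=\theta_0+(t+\delta_j)\omega+dd^c\varphi_{0,j}$. Because $\theta_0+\e_j\omega+dd^c\varphi_{0,j}\ge 0$, this is a smooth family of K\"ahler forms with $\tilde\theta^j_t\ge(t+\delta_j-\e_j)\omega>0$, and $\tilde\theta^j_t-t\,\partial_t\tilde\theta^j_t=\theta_0+\delta_j\omega+dd^c\varphi_{0,j}\ge\e_j\omega\ge 0$, so \eqref{condition of theta} holds. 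Theorem \ref{main theorem} applied to
$$\partial_t\tilde\varphi=\log\frac{(\tilde\theta^j_t+dd^c\tilde\varphi)^n}{\Omega}-F(t,z,\tilde\varphi+\varphi_{0,j}),\qquad \tilde\varphi(0,\cdot)=0$$
(the new nonlinearity still has non-decreasing $s$-dependence) gives a smooth solution $\tilde\varphi_{t,j}$; since $dd^c\varphi_{0,j}$ is $t$-independent, $\varphi_{t,j}:=\tilde\varphi_{t,j}+\varphi_{0,j}$ is a smooth $\theta_{t+\delta_j}$-psh solution of $\partial_t\varphi_{t,j}=\log\frac{(\theta_0+(t+\delta_j)\omega+dd^c\varphi_{t,j})^n}{\Omega}-F(t,z,\varphi_{t,j})$ with $\varphi_{t,j}|_{t=0}=\varphi_{0,j}$. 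As $\delta_{j+1}<\delta_j$ and $\varphi_{0,j+1}\le\varphi_{0,j}$, the function $\varphi_{t,j+1}$ is a subsolution of the flow solved by $\varphi_{t,j}$, so Proposition \ref{comparison} yields $\varphi_{t,j+1}\le\varphi_{t,j}$ and $\varphi_t:=\lim_j\varphi_{t,j}$ is well defined.

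\emph{A priori estimates uniform in $j$.} I would re-run the estimates of Section \ref{a priori estm} for $\varphi_{t,j}$, checking that all constants may be taken independent of $j$ on $[\e,T']\times X$, $0<\e<T'<T$. Lemma \ref{bound 1} immediately gives $\varphi_{t,j}\le Ct+\sup_X\varphi_0$. The delicate point is the lower bound: one needs a $j$-independent function $\psi_t$ with $\varphi_{t,j}\ge\psi_t$, $\psi_t\ge\varphi_0+n(t\log t-t)-Ct$ and $\psi_\e\in L^\infty(X)$, obtained as in the proof of Lemma \ref{bound from below} but now for the degenerating class $\theta_0+t\omega$: here the zero Lelong number hypothesis is essential, since by Skoda's theorem \cite{Sko} $e^{-2\beta\varphi_0}\omega^n$ has $L^p$ density $(p>1)$ and Ko\l{}odziej's estimates \cite{Kol98,EGZ08} then produce a bounded solution of a degenerate Monge-Amp\`ere equation in the class $\theta_0+t\omega$ out of which $\psi_t$ is built, exactly as in \cite{GZ13}. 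Granting this uniform lower bound, the proof of Proposition \ref{bound f' above} (which uses \eqref{condition of theta}) gives a uniform upper bound for $\dot\varphi_{t,j}$, hence $H_{t,j}=\exp(\dot\varphi_{t,j}+F)$ is uniformly bounded in $L^2(\Omega)$, Ko\l{}odziej's uniform estimate bounds $\mathrm{Osc}_X\varphi_{t,j}$, and then Propositions \ref{bound f'}, \ref{bound grad} and Lemma \ref{bound delta} control $\dot\varphi_{t,j}$ from below, $|\nabla\varphi_{t,j}|^2_\omega$ and $\Delta_\omega\varphi_{t,j}$; Theorem \ref{Evan-Krylov 2} (equivalently Theorem \ref{full estimates}) upgrades these to uniform $C^k([\e,T']\times X)$ bounds for all $k$. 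Obtaining this uniform lower bound as the class degenerates, i.e.\ re-deriving Lemma \ref{bound from below} on the nef boundary with constants independent of the approximation parameter, is the main obstacle of the proof.

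\emph{Limit and convergence at $t=0$.} By the uniform $C^k$ bounds and Arzel\`a--Ascoli, $\varphi_{t,j}\to\varphi_t$ in $C^\infty_{\mathrm{loc}}((0,T]\times X)$; letting $\delta_j\to 0$ in the equation shows $\varphi_t$ is smooth on $(0,T]\times X$ and solves \eqref{starting from nef class}, while $\varphi_t\ge\psi_t$ shows it is $\theta_t$-psh. Since each $\varphi_{t,j}$ is continuous up to $t=0$ with $\varphi_{t,j}\to\varphi_{0,j}$ (Proposition \ref{comparison}) and $\varphi_t\le\varphi_{t,j}$, one gets $\limsup_{t\to0}\varphi_t\le\varphi_{0,j}$ for every $j$, hence $\le\varphi_0$ a.e.; combined with $\liminf_{t\to0}\varphi_t\ge\varphi_0$ from the lower bound and the relative $L^1$-compactness of bounded families of quasi-psh functions, this gives $\varphi_t\to\varphi_0$ in $L^1(X)$, exactly as in Section \ref{conv L1}.

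\emph{Uniqueness.} Under the hypotheses of Theorem \ref{uniqueness} ($\frac{\partial F}{\partial s}\ge 0$, $|\frac{\partial F}{\partial t}|\le C'$) uniqueness follows by the argument of that theorem: after the substitution $\phi(t,z)=e^{At}\varphi((1-e^{-At})/A,z)$ the transformed family of background forms is non-decreasing in $t$ — which is even more transparent here, as $\theta_t=\theta_0+t\omega$ already is — and two solutions are compared through Proposition \ref{weak comparison}, the lower bound $\varphi_t\ge\varphi_0-c(t)$ with $c(t)\searrow 0$ controlling the behaviour as $t\to 0$. This yields uniqueness of $(\varphi_t)$.
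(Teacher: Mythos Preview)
Your approach is viable but more elaborate than the paper's, and you elide the one technical point that is the real content of the nef case.

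The paper does \emph{not} regularize $\varphi_0$. Since $\varphi_0\in PSH(X,\theta_0)\subset PSH(X,\theta_0+\e\omega)$ with zero Lelong numbers, Theorem~\ref{main theorem} applies directly to the flow with background family $t\mapsto\theta_0+(t+\e)\omega$ and initial datum $\varphi_0$ itself, yielding $\varphi_{t,\e}$; one checks $\e\mapsto\varphi_{t,\e}$ is non-decreasing (the $\e'$-solution is a supersolution of the $\e$-equation for $\e'>\e$), sets $\varphi_t:=\lim_{\e\searrow 0}\varphi_{t,\e}$, and obtains the lower bound $\varphi_t\ge\phi_t$ by comparing with the solution $\phi_t$ of the simpler flow $\partial_t\phi=\log\frac{(\theta_0+t\omega+dd^c\phi)^n}{\omega^n}$ supplied as a black box by \cite[Theorem~7.1]{GZ13}. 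Your double regularization of both $\varphi_0$ and the form buys nothing extra and creates an awkward $j$-dependence in the background $\tilde\theta^j_t$ through $dd^c\varphi_{0,j}$, whose $C^2$ norm is not controlled as $j\to\infty$, so the constants in \eqref{asume 1} for the $\tilde\theta^j$-flow are not obviously uniform in $j$.

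The step you genuinely gloss over is the higher-order estimates. For $t\ge\e$ the forms $\theta_0+t\omega$ represent K\"ahler \emph{classes} but need not be positive \emph{as forms} (nef does not mean semipositive), so Section~\ref{a priori estm}, which rests on \eqref{asume 1}, does not apply to the equation for $\varphi_{t,j}$ as written. The paper's remedy, which is the one new idea beyond Theorem~\ref{main theorem}: for fixed $\delta>0$ choose a K\"ahler form $\omega_\delta$ and a smooth potential $h_\delta$ with $\theta_0+\delta\omega=\omega_\delta+dd^ch_\delta$; then for $t\ge\delta$ the equation reads
\[
\bigl(\omega_\delta+(t-\delta)\omega+dd^c(\varphi_t+h_\delta)\bigr)^n=e^{\dot\varphi_t+F(t,z,\varphi_t)}\,\Omega,
\]
with honestly K\"ahler background $\omega_\delta+(t-\delta)\omega$, and now Ko\l odziej's estimate and all of Section~\ref{a priori estm} (Propositions~\ref{bound f'}, \ref{bound grad}, Lemma~\ref{bound delta}, Theorem~\ref{full estimates}) apply to $\varphi_t+h_\delta$ uniformly on $[\delta,T]$. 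Without this potential-absorbing device your instruction ``re-run the estimates of Section~\ref{a priori estm}'' does not go through.
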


\begin{proof}
First, observe that for $\e>0$, $\theta_0+\e\omega$ is a K\"ahler form. Thanks to Theorem A, there exists a family $\f_{t,\e}$  of $(\theta_t+\e\omega)$-psh functions satisfying 
$$\dfrac{\partial \f_{t,\e}}{\partial t}=\log\dfrac{(\theta_t+\e\omega+dd^c\f_{t,\e})^n}{\Omega}-F(t,z,\f_{t,\e})$$
with initial data $\f_0$ which is a $(\theta_0+\e\omega)$-psh function with zero Lelong numbers. 

First, we prove that $\f_{t,\e}$ is decreasing in $\e$. Indeed, for any $\e'>\e$ 
\begin{align*}
\dfrac{\partial \f_{t,\e'}}{\partial t}&=\log\dfrac{(\theta_t+\e'\omega+dd^c\f_{t,\e'})^n}{\Omega}-F(t,z,\f_{t,\e'})\\
&\geq \log\dfrac{(\theta_t+\e\omega+dd^c\f_{t,\e'})^n}{\Omega}-F(t,z,\f_{t,\e'})
\end{align*}
hence  $\f_{t,\e'}\geq \f_{t,\e}$ by the comparison principle (Proposition \ref{comparison}). Then we consider
$$\f_t:=\lim_{\e\rightarrow 0+}\searrow \f_{t,\e}.$$

We now  show that $\f_t$ is bounded below (so it is not $-\infty $). Thanks to \cite[Theorem 7.1]{GZ13}, there exist a family $(\phi_t)$ of  $(\theta_0+t\omega)$-psh functions such that 
\begin{align*}
\dfrac{\partial \phi_t}{\partial t}=\log\dfrac{(\theta_0+t\omega+dd^c\phi_t)^n}{\omega^n}
\end{align*}
There is $\sigma>0$ such that  $\sigma^{-1}\omega^n\leq \Omega\leq \sigma \omega^n$, 
so we may assume that
$$\dfrac{\partial \phi_t}{\partial t}\leq \log\dfrac{(\theta_0+t\omega+dd^c\phi_t)^n}{\Omega}.$$
Moreover, $\f_{t,\e}\leq C$, where $C$ only depends on $\sup_{X}\f_0$, hence assume that $F(t,z,\f_{t,\e})\leq A$ for all $\e$ small. Changing variables, we can assume that $F(t,z,\f_{t,\e})\leq 0$, hence
\begin{align*}
\dfrac{\partial \f_{t,\e}}{\partial t}&\geq \log\dfrac{(\theta_0+t\omega+dd^c\f_{t,\e})^n}{\Omega}.
\end{align*}
Using the comparison principle (Theorem \ref{comparison}) again, we get $ \f_{t,\e}\geq \phi_t$  for all $\e>0$ small, so $\f_t\geq \phi_t$. 

\medskip
For the essential uniform bound of $\f_t$, we use the method of Guedj-Zeriahi. For $\delta>0$, we fix $\omega_\delta$ a K\"ahler form such that $\theta_0+\delta\omega=\omega_\delta+dd^ch_\delta$ for some smooth function $h_\delta$. Our equation can be rewritten, for $t\geq \delta$
\begin{equation}\label{regular equation}
(\omega_\delta+(t- \delta)\omega+dd^c(\f_t+ h_\delta))^n=H_t\Omega
\end{equation}
where 
$$H_t=e^{\dot{\f_t}+F(t,x,\f_t)}$$
are uniformly in $L^2$, since
$$\dot{\f_t}\leq \frac{-\phi_\delta+C}{t}+C,$$
 for $t\geq \delta$ as in Lemma \ref{bound from below}. Kolodziej's estimates now yields that $\f_t+h_\delta$ is uniformly bounded for $t\geq \delta$, so is $\f_t$.
\medskip 

Now apply the arguments in Section \ref{a priori estm} to the equation (\ref{regular equation}) we obtain the bounds for the time derivative, gradient, Laplacian and higher order derivatives of $\f_t+h_\delta$ in $[\delta,T]\times X$. We  thus obtain a priori estimates for $\f_t$ which allow us get the existence of solution of (\ref{starting from nef class}) and the convergence to the initial convergence in $L^1(X)$.   
\end{proof}


\end{document}